\newtheoremstyle{note}
  {3pt}
  {3pt}
  {}
  {}
  {\itshape}
  {:}
  {.5em}
  {}
\newtheorem{lemma}{Lemma}[section]
\newtheorem{corollary}[lemma]{Corollary}
\newtheorem{theorem}[lemma]{Theorem}
\newtheorem{remark}[lemma]{Remark}
\newtheorem*{maintheorem*}{Main Theorem}
\theoremstyle{definition}{\newtheorem{definition}[lemma]{Definition}}
\theoremstyle{note}{\newtheorem{claim}{Claim}[section]
\newtheorem*{claim*}{Claim}}
\numberwithin{equation}{section}
\newcommand{\abs}[1]{\left|#1\right|}
\newcommand{\norm}[1]{\left\| #1 \right\|}
\newcommand{\wde}{w^\delta}
\newcommand{\xdom}{D}
\newcommand{\Nx}{N_D}
\newcommand{\ad}{a^\delta}
\newcommand{\oa}{\overline{a}}
\newcommand{\ua}{\underline{a}}
\newcommand{\Dom}{D_T}
\renewcommand{\i}{\ifmmode\mathit{\mathchar"7010 }\else\char"10 \fi}
\renewcommand{\j}{\ifmmode\mathit{\mathchar"7011 }\else\char"11 \fi}
\newcommand{\R}{\mathbb{R}}
\newcommand{\N}{\mathbb{N}}
\newcommand{\Z}{\mathbb{Z}}
\newcommand{\seq}[1]{\left\{#1\right\}}
\newcommand{\bx}{\mathbf{x}}
\newcommand{\wh}{\widehat{w}}
\newcommand{\epsx}{\epsilon_1}
\newcommand{\epst}{\epsilon_0}
\newcommand{\Dt}{{\Delta t}}
\newcommand{\Dx}{{\Delta x}}
\newcommand{\Dm}{D^-_x}
\newcommand{\Dp}{D_x^+}
\newcommand{\Dc}{D^c_x}
\newcommand{\Dym}{D^-_y}
\newcommand{\Dyp}{D_y^+}
\newcommand{\Dyc}{D^c_y}
\newcommand{\Dtp}{D_t^+}
\newcommand{\Epsilon}{\mathcal{E}}
\newcommand{\duz}{d\underline{z}}
\newcommand{\Dxm}{\Dm}
\newcommand{\Dxp}{\Dp}
\newcommand{\Dxc}{\Dc}
\renewcommand{\Dxp}[1][]{%
   \ifthenelse{\isempty{#1}{}}{D^+_x}{D^+_{x_{#1}}}%
 }
\renewcommand{\Dxm}[1][]{%
   \ifthenelse{\isempty{#1}{}}{D^-_x}{D^-_{x_{#1}}}%
 }
\renewcommand{\Dyp}[1][]{%
   \ifthenelse{\isempty{#1}{}}{D^+_y}{D^+_{y_{#1}}}%
 }
\renewcommand{\Dym}[1][]{%
   \ifthenelse{\isempty{#1}{}}{D^-_y}{D^-_{y_{#1}}}%
 }
\renewcommand{\Dxc}[1][]{%
   \ifthenelse{\isempty{#1}{}}{D^c_x}{D^c_{x_{#1}}}%
 }
\newcommand{\wdi}{w_{\Dx}}
\begin{document}

\title[Rough coefficients] {Convergence rates of finite difference schemes for the linear advection and wave equation with rough coefficient}

\date{\today}
\author[]{}
 \author[F. Weber]{Franziska Weber} \address[F. Weber]{\newline
   Seminar for Applied Mathematics (SAM) \newline ETH Z\"urich,
   \newline R\"amistrasse 101, Z\"urich, Switzerland.}
 \email[]{franziska.weber@sam.math.ethz.ch}
 \urladdr{http://www.sam.math.ethz.ch/\~{}frweber}

\keywords{linear advection equation, linear wave equation, rough coefficient, finite difference scheme, convergence rate}

\subjclass[2010]{65M06, 65M15, 35L05, 35L40}

\begin{abstract}
We prove convergence rates of explicit finite difference schemes for the linear advection and wave equation in one space dimension with H\"{o}lder continuous coefficient. The obtained convergence rates explicitely depend on the H\"{o}lder regularity of the coefficient and the modulus of continuity of the initial data. We compare the theoretically established rates with the experimental rates of a couple of numerical examples. 
 \end{abstract}

\maketitle

\section{Introduction}

Propagation of acoustic waves in a heterogeneous medium plays an important
role in many applications, for instance in seismic imaging
in geophysics and in the exploration of hydrocarbons
\cite{BIO1,IA1}. This wave propagation is modeled by the linear wave
equation:
\begin{subequations}\label{eq:waveeq}
  \begin{align}
    \label{seq1:wavepde}
    \frac{1}{c^2(\mathbf{x})} \partial_{tt}^2 p(t,\mathbf{x})-\Delta p(t,\mathbf{x})&=0, \quad (t,\mathbf{x})\in D_T,\\
    \label{seq2:initwave1}
    p(0,\bx)&=p_0(\bx), \quad \bx\in D,\\
    \label{seq3:inti2}
    \partial_t p(0,\bx)&=p_1(\bx),\quad \bx\in D,
  \end{align}
\end{subequations}
where $D_T:=[0,T]\times D$, $D\subset \R^d$, augmented with boundary conditions. 
Here, $p$ is the acoustic
pressure and the wave speed is determined by the coefficient $c^2 =
c^2(\mathbf{x}) > 0$. The coefficient $c$ encodes information about the
material properties of the medium. As an example, the coefficient $c$
represents various geological properties  when seismic waves propagate in a
rock formation.
It is well known that the linear wave equation \eqref{eq:waveeq} can
be rewritten as a first-order system of partial differential equations
by defining $v(t,\bx):=\partial_t p(t,\bx)$ and $\mathbf{u}(t,\bx):=\nabla p(t,\bx)$,
resulting in
\begin{subequations}
  \label{eq:wavesys2}
  \begin{align}
    \label{seq1:system12}
    \frac{1}{c^2(\bx)}\partial_t v(t,\bx)-\mathrm{div}( \mathbf{u}(t,\bx))&=0,\\
    \partial_t\mathbf{u}(t,\bx)-\nabla v(t,\bx)&=0, \quad (t,\mathbf{x})\in  D_T,\nonumber\\
    \label{seq2:initwave122}
    v(0,\bx)&=p_1(\bx), \quad \bx\in D,\\
    \label{seq3:inti22}
    \mathbf{u}(0,\bx)&=\nabla p_0(\bx),\quad \bx\in D.
  \end{align}
\end{subequations}

The above system \eqref{eq:wavesys2} is strictly hyperbolic
\cite{GKO1} with wave speeds given by $\pm {c}$. Under the
assumption that the coefficient $c^2\in C^{0,\alpha}\cap L^\infty(D)$
for some $\alpha>0$ and that it is uniformly positive on $D$, i.e., there
exists constants $\underline{c}, \overline{c} > 0$ such that
\begin{equation}
  \label{eq:assona}
  0 < \underline{c} \leq c^2({\mathbf x}) \leq \overline{c}, \quad \forall {\mathbf x} \in D.
\end{equation}
and that the initial data $p_0\in H^{1}(D)$ and $p_1\in L^2(D)$, one
can prove existence of a unique weak solution $p\in C^0([0,T];H^1(D))$
with $\partial_t p \in C^0([0,T];L^2(D))$ following classical energy arguments
for linear partial differential equations. See for instance
\cite[Chapter III, Theorems 8.1 and 8.2]{LM1972}. A smoother
coefficient $c$ and more regular initial data $p_0,p_1$ result in a
more regular solution \cite{LM1972}.

Even though equations \eqref{eq:waveeq}, \eqref{eq:wavesys2} are linear, analytical solution formulae are in general not available due to the possibly complex geometry of the domain $D$, the heterogeneity of the coefficient, or boundary conditions. Consequently, solutions have to be approximated numerically. Among the most popular methods for the linear wave equation with inhomogeneous coefficient $c$ are finite difference methods and finite element methods for which an extensive stability and convergence analysis is available~\cite{GKO1,KL1,LT1}. A key question is the rate at which numerical schemes converge to the exact solution as the discretization paramter goes to zero or the computational effort increases, since this allows to estimate the computational work needed to get a certain desired quality of the approximation. If the underlying solution of the equation is smooth, this generally depends on the order of the truncation error which is determined by the order of the spatial and temporal discretization, that is, the order of the underlying difference operators (for finite difference schemes) or the dimension of the polynomial approximation spaces (for finite element methods). The smoothness of the solution again depends on the regularity of the coefficient and the initial data for the equation.
If the coefficient $c$ and the initial data $p_0,p_1$
are smooth, say $C^k(D)$ or $H^s(D)$ for some large enough Sobolev
exponent $s$, then by regularity results for the linear wave equation
\cite{LM1972}, the solution is also smooth, i.e., it belongs to
$H^s(D_T)$ and the finite difference (resp. finite element)
discretizations converge at the order of the underlying difference
operators (resp. polynomial approximation spaces).
 
\subsection{Rough coefficients}
As noted above, the regularity of the solution to the wave equation
\eqref{eq:waveeq} and the resulting (high) rate of convergence of
numerical approximations rely on the smoothness of the coefficient
$c$. Accordingly, most of the numerical analysis literature on the
wave equation assumes a smooth coefficient $c$. However, this
assumption is not always realized in practice. As noted before, the wave
equation is heavily used to model seismic imaging in rock formations
and other porous media (for instance oil and gas reservoirs). Such
media are very heterogeneous with sharp interfaces, strong contrasts
and aspect ratios \cite{IA1}. Furthermore, the material properties of
such media can only be determined by measurements. Such measurements
are inherently \emph{uncertain}. This uncertainty is modeled in a
statistical manner by representing the material properties (such as
rock permeability) as random fields. In particular, log-normal random
fields are heavily used in modeling material properties in porous and
other geophysically relevant media \cite{IA1,FGPS1}. Thus, the
coefficient $c$ is not smooth, not even continuously differentiable,
see Figure \ref{fig:coef1d} for an illustration of coefficient $c$
where the rock permeability is modeled by a log-normal random field
(the figure represents a single realization of the field). Closer
inspection of the coefficients obtained in practice reveals that the
material coefficient $c$ is at most a H\"older continuous function,
that is, $c \in C^{0,\alpha}$ for some $0 < \alpha < 1$. No further
regularity can be assumed on the coefficient $c$ representing material
properties of most geophysical formations.

Given the above discussion, it is natural to search for numerical
methods that can effectively and efficiently approximate the acoustic
wave equation with rough (merely H\"older continuous) coefficients. In
particular, one is interested in designing numerical methods that can
be rigorously shown to converge to the underlying weak solution. Furthermore, one is also interested in obtaining a convergence rate for the discretization as
the mesh parameters are refined. We remark that the issue of a
convergence rate is not just of theoretical significance, it has for example
profound implications on calculating complexity estimates for Monte
Carlo and Multilevel Monte Carlo methods (see
\cite{sssid3}) to solve the random (uncertain) PDE that
results from considering the material coefficient as a random field
(as is done in engineering practice).

A search through the literature revealed that there are not many results available concerning convergence rates for linear hyperbolic partial differential equations with rough coefficients. In~\cite{Jovanovic1987}, B. Jovanovi\'{c}, L. Ivanovi\'{c} and E. S\"{u}li prove convergence rates for finite difference approximations of the hyperbolic system
\begin{equation*}
\frac{\partial^2 u}{\partial t^2} -\sum_{i,j=1}^2 \frac{\partial}{\partial x_i}\left(a_{ij}\frac{\partial u}{\partial x_j}\right)+a u = f,\quad \text{in } (0,T)\times D,
\end{equation*}
under the assumption that the solution $u$ lies in the Sobolev space $H^\lambda((0,T)\times D)$ and the coefficients $a_{ij}(x)\in W^{\lambda-1,\infty}(D)$, $a(x)\in W^{\lambda-2,\infty}(D)$, with $2<\lambda\leq 4$. It is shown that the approximations converge at rate $\Dx^{\lambda-2}$ in the energy norm (a discrete version of the $H^1$-norm). In~\cite{Jovanovic1992}, this result is extended to coefficients $a_{ij}(x)\in W^{\lambda-1,2}(D)$, $a(x)\in W^{\lambda-2,2}(D)$.

Another related work is the article~\cite{Jovanovic2005} by V. Jovanovi\'{c} and C. Rohde where they establish error estimates for finite volume approximations of linear hyperbolic systems in multiple space dimensions for initial data with low regularity, however under the assumption that the coefficients are smooth.

A survey of the available results on finite difference methods for hyperbolic equations can be found in Chapter 4 of the book by B. Jovanovi\'{c} and E. S\"{u}li~\cite{FDMbook}.

Given this paucity of available results, we aim to contribute to the theory with the current paper. Since our ultimate goal is proving convergence rates for multidimensional wave equations, we will start by analyzing rates for linear hyperbolic systems in one space dimension, in particular, we will consider the linear advection equation
\begin{equation}\label{i:linadv}
\partial_t u(t,x)+\partial_x (a(x)u(t,x))=0, \quad (t,x)\in [0,T]\times D,
\end{equation}
and the linear wave equation
\begin{equation}\label{i:1dwave}
\frac{1}{a(x)}\partial^2_{tt} p(t,x) -\partial_{xx}^2 p(t,x)=0,\quad (t,x)\in [0,T]\times D,
\end{equation}
in one space dimension where the spatially heterogeneous coefficient $a$ is positive and uniformly bounded but has only little regularity, specifically, we assume $a\in C^{0,\alpha}(D)$, for some $0<\alpha\leq 1$. We require the initial data $u_0$ for the transport equation to be H\"{o}lder continuous with exponent $\gamma>0$, and for the wave equation, we assume that the derivatives $\partial_t p$ and 
$\partial_x p$ have moduli of continuity in $L^2$.

These equations can be seen as prototype models for equation \eqref{eq:waveeq} above for $a:=c^2$. 

Equation \eqref{i:linadv} also appears as a model for transport of pollutants in heterogeneous media, see for example~\cite{Elfeki2012,GWAT199}. Moreover, these two models are related to mixing in turbulent flow~\cite{turbulent}. 

In a first part (Section \ref{sec:intro}), we study the properties of equation \eqref{i:linadv} under the assumption that $a$ is H\"{o}lder continuous and propose a simple upwind scheme for the numerical approximation. We show that this scheme is stable under a linear CFL-condition and converges, and then prove a convergence rate of the scheme in $L^1(\xdom)$ and $L^2(\xdom)$ depending explicitly on the H\"{o}lder regularity of the coefficient $a$ and the initial data. To prove the rate, we show that the numerical approximations are approximately H\"{o}lder continuous in time and use a variant of S.~N.~Kru\v{z}kov's doubling of variables technique~\cite{k1970} combined with a type of Gr\"{o}nwall inequality for the $L^2$-case. We conclude the section with a couple of numerical experiments that confirm that the rates are indeed quite low, but higher than the theoretically established rate. This may indicate that our estimate is not sharp.

In the second part (Section \ref{sec:1d}), we show that the techniques from Section \ref{sec:intro} can be used to establish a convergence rate for an upwind finite difference scheme for the first order reformulation \eqref{eq:wavesys2} of the linear wave equation \eqref{i:1dwave} in one space dimension for H\"{o}lder continuous coefficient $a$ but under slightly stronger assumptions on the initial data, or given that the solution has a known modulus of continuity. Again, we conduct a couple of numerical experiments and observe that the experimental rates are close to the theoretically derived ones. We conclude by summarizing the results and suggesting further research directions in Section \ref{sec:conc}.

\section{Transport equation with H\"{o}lder continuous coefficient}
\label{sec:intro}

The purpose of this section is to investigate properties the linear advection equation in one space dimension,
\begin{subequations}\label{eq:setup}
  \begin{align}\label{seq1:trans1}
    \frac{\partial}{\partial t}u(t,x)+\frac{\partial}{\partial
      x}(a(x)u(t,x))=0,&\quad (t,x)\in \Dom,\\ 
    \label{seq2:inittrans}
    u(0,x)=u_0(x),&\quad x\in \xdom,
  \end{align}
\end{subequations}
on the domain $\Dom:=(0,T]\times \xdom$, for some finite time $T>0$, a finite interval $0\in\xdom\subset \R$ with length $|D|\geq 2$, periodic boundary conditions and $u_0\in L^1(\xdom)$ a given initial data. Alternatively, we could consider $D=\R$ and compactly supported initial data $u_0\in L^1(\R)$.We consider coefficients
$a\in L^{\infty}(\xdom)$ which are positive and bounded away from zero,
that is
\begin{equation}\label{eq:ascona}
  \oa \geq a(x)\geq \ua>0,\quad \forall\, x\in\xdom.
\end{equation}
as well as H\"{o}lder continuous, $a\in C^{0,\alpha}(\overline{\xdom})$, for some exponent $\alpha>0$. 
As we will see in the following, it is more convenient to work with the variable $w:=a(x) u$ instead of $u$ and the equation it satisfies,
\begin{subequations}\label{eq:forw}
  \begin{align}\label{seq1:weq}
    \frac{\partial}{\partial t}\biggl(\frac{w(t,x)}{a(x)}\biggr)+ \frac{\partial}{\partial x}w(t,x)=0,&\quad (t,x)\in \Dom,\\
    \label{seq2:initw}
    w(0,x)=w_0(x):=a(x)u_0(x),&\quad x\in \xdom.
  \end{align}
\end{subequations}
We assume that the initial data $w_0$ is H\"{o}lder continuous $C^{0,\gamma_\infty}(\xdom)$ for some $\gamma_\infty>0$:
\begin{equation}\label{eq:mocminfty}
|w_0|_{C^{0,\gamma_\infty}(\xdom)}:=\sup_{x\neq y\in\xdom}\frac{|w_0(x)-w_0(y)|}{|x-y|^{\gamma_\infty}}\leq C <\infty.
\end{equation}
We note that this implies in particular for any $x\in\xdom$
\begin{equation*}
 \sup_{|h|\leq \sigma}|w_0(x+h)-w_0(x)|\leq C \, \sigma^{\gamma_\infty}.
\end{equation*}
We will see that some solution properties can also be obtained under the slightly weaker assumption
\begin{equation}\label{eq:mocinit}
\int_D \sup_{|h|\leq \sigma}|w_0(x+h)-w_0(x)|^p\, dx\leq \sigma^{p\gamma_p}
\end{equation}
In the following, we investigate to what extent the exponents $\alpha$, $\gamma_{p}$ influence the regularity of the solution $w$ at a time $t>0$ and the convergence rate of the finite difference scheme
\begin{equation}\label{eq:upwindscheme}
  \frac{D^+_t w^{n}_j}{a_j}=-D^-_x w_j^n, \quad 1\leq j\leq \Nx,\  0\le n\le N_T.
\end{equation}
as the mesh is refined.
\subsection{Regularization of the coefficient}
Since the coefficient $a$ is not differentiable, it is possible that the solution $w$ is not differentiable in the classical sense either and only weak solutions to equation \eqref{seq1:weq} can be defined. By a weak solution to \eqref{eq:forw}, we mean a function $w=w(t,x)\in C^{0,\gamma}(\Dom)$ for some $\gamma>0$ satisfying \eqref{eq:forw} in the distributional sense, that is, for all smooth test functions $\varphi\in C^{\infty}_c(\Dom)$,
\begin{equation*}
\int_{\Dom} \frac{w}{a}\,\partial_t\varphi\, dxdt + \int_{\Dom} w \,\partial_x\varphi\,dxdt + \int_D \frac{w_0(x)}{a}\varphi(0,x)\, dx = \int_D \frac{w(t,x)}{a}\varphi(t,x)\, dxdt.
\end{equation*}
To deal with the possible non-differentiability of the solution, we will in a first step regularize the coefficient $a$ by convolving it with a smooth test function $\omega_\delta\in C^{\infty}_0(\R)$, given as
\begin{equation}\label{eq:omegadelta}
  \omega_{\delta}(x)=\frac{1}{\delta} \omega\biggl(\frac{x}{\delta}\biggr),
\end{equation}
where $\omega\in C^{\infty}_0(\R)$ is an even function with the properties
\begin{equation*}
  0\le \omega\le 1,\quad \omega(x)=0 \textrm{ for } |x|\ge 1,\quad \int_{\R} \omega(x)\,dx=1.
\end{equation*}
Then we consider the solution $\wde$ of the equation
\begin{subequations}\label{eq:forwdelta}
  \begin{align}\label{seq1:weqd}
    \frac{\partial}{\partial t}\biggl(\frac{\wde(t,x)}{\ad(x)}\biggr)+ \frac{\partial}{\partial x}\wde(t,x)=0,&\quad (t,x)\in \Dom,\\
 \label{seq2:initwd}
    \wde(0,x)=\wde_0(x):=(w\ast \omega_\delta)(x),&\quad x\in \xdom,\\
\label{seq3:ad}
\ad(x):=(a\ast \omega_\delta)(x),&\quad x\in \xdom.
  \end{align}
\end{subequations}
The coefficient $\ad$ is smooth and therefore in particular Lipschitz continuous and hence we can define classical solutions to \eqref{eq:forwdelta} in a standard way, using the method of characteristics: We let $\eta$ solve the ordinary differential equation
\begin{equation}\label{eq:char}
\begin{split}
\frac{d}{dt}\eta(t,x_0)=\ad(\eta(t,x_0)),&\quad (t,x_0)\in \Dom\\
\eta(x_0,0)=x_0,&\quad x_0\in\xdom.
\end{split}
\end{equation}
$\wde$ is constant along the charachteristics $\eta$ since
\begin{align*}
\frac{d}{dt}\wde(t,\eta(t,x_0))&=\partial_t \wde(t,\eta(t,x_0)+\frac{d}{dt}\eta(t,x_0) \partial_x\wde(t,\eta(t,x_0))\\
&=\ad(\eta(t,x_0))\left( \frac{\partial_t\wde(t,\eta(t,x_0))}{\ad(\eta(t,x_0))}+ \partial_x \wde(t,\eta(t,x_0))\right)=0.
\end{align*}
Thus the solution at time $t>0$ is given by
\begin{equation*}
\wde(t,\eta(t,x_0))=\wde_0(x_0).
\end{equation*}
Therefore, if the initial data is in $L^\infty(\overline{\xdom})$, the solution will be essentially bounded at any later time. Using this, we can derive H\"older continuity of the solution in time and space:
\begin{lemma}\label{lem:hoeldercont}
Assume that the coefficient $a$ in \eqref{eq:forwdelta} is  bounded, i.e. satisfying \eqref{eq:ascona}, and that $w_0$ is H\"older continuous with exponent $\gamma_\infty$, as in \eqref{eq:mocminfty}. Then $\wde$ will be H\"older continuous in space and time with exponent $\gamma_\infty$ for any time $t>0$, independently of $\delta>0$. In particular, we have
\begin{align*}
\sup_{x,t\in \Dom,|h|\leq\sigma}\frac{|\wde(t,x)-\wde(t+h,x)|}{h^{\gamma_{\infty}}}&\leq \oa^{\gamma_\infty} \|w_0\|_{C^{0,\gamma_{\infty}}},\\
\sup_{x,t\in \Dom,|h|\leq\sigma}\frac{|\wde(t,x+h)-\wde(t,x)|}{h^{\gamma_{\infty}}}&\leq \left(\frac{\oa}{\ua}\right)^{\gamma_\infty} \|w_0\|_{C^{0,\gamma_{\infty}}},\\
\end{align*}
\end{lemma}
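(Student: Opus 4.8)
The plan is to exploit the explicit representation of $\wde$ along characteristics, namely $\wde(t,\eta(t,x_0))=\wde_0(x_0)$, which reduces both H\"older estimates to understanding how the characteristic flow $\eta$ deforms distances. The key observation is that the initial data $\wde_0=w_0\ast\omega_\delta$ inherits the H\"older seminorm of $w_0$, since convolution with a probability kernel does not increase the modulus of continuity: for any $x,y$, $|\wde_0(x)-\wde_0(y)|\leq \int |w_0(x-z)-w_0(y-z)|\,\omega_\delta(z)\,dz\leq |w_0|_{C^{0,\gamma_\infty}}|x-y|^{\gamma_\infty}$. Thus it suffices to track the ratio $|x_0-y_0|/|\eta(t,x_0)-\eta(t,y_0)|$ for the spatial estimate and the increment $|\eta(t,x_0)-\eta(t+h,x_0)|$ for the temporal estimate.

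\emph{Temporal continuity.} For the time estimate I would fix a point $x$ and a later point $(t+h,x)$; tracing backward along characteristics, $\wde(t,x)=\wde_0(\zeta(t,x))$ and $\wde(t+h,x)=\wde_0(\zeta(t+h,x))$ where $\zeta(\cdot,x)$ denotes the backward characteristic through $x$. The ODE \eqref{eq:char} and the bound \eqref{eq:ascona} give a bound on the speed, $|\frac{d}{dt}\eta|=|\ad(\eta)|\leq\oa$ (using that convolution preserves the bounds $\ua\leq\ad\leq\oa$). Hence the two footpoints $\zeta(t,x)$ and $\zeta(t+h,x)$ differ by at most $\oa|h|$ in space, and applying the H\"older bound on $\wde_0$ yields
\begin{equation*}
|\wde(t,x)-\wde(t+h,x)|\leq |w_0|_{C^{0,\gamma_\infty}}\,(\oa|h|)^{\gamma_\infty}=\oa^{\gamma_\infty}|w_0|_{C^{0,\gamma_\infty}}|h|^{\gamma_\infty},
\end{equation*}
which is the first claimed inequality.

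\emph{Spatial continuity.} For the space estimate I fix $t$ and two points $x,x+h$; their backward characteristics land at footpoints $x_0=\zeta(t,x)$ and $y_0=\zeta(t,x+h)$, and $\wde(t,x)-\wde(t,x+h)=\wde_0(x_0)-\wde_0(y_0)$. The main work is to control $|x_0-y_0|$ in terms of $|h|$. Writing the distance between two forward characteristics $\phi(t):=\eta(t,x_0)-\eta(t,y_0)$, differentiating and using the Lipschitz/sign structure of $\ad$ together with $\ad>0$, one obtains a differential inequality for $|\phi|$; the cleanest route is to note that since $\ad$ is positive, characteristics do not cross, and by a Gr\"onwall-type comparison the ratio of separations is controlled by the ratio of speeds, giving $|x_0-y_0|\leq(\oa/\ua)|h|$. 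Feeding this into the H\"older bound on $\wde_0$ produces
\begin{equation*}
|\wde(t,x)-\wde(t,x+h)|\leq|w_0|_{C^{0,\gamma_\infty}}|x_0-y_0|^{\gamma_\infty}\leq\left(\frac{\oa}{\ua}\right)^{\gamma_\infty}|w_0|_{C^{0,\gamma_\infty}}|h|^{\gamma_\infty}.
\end{equation*}

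\emph{The main obstacle} I expect is making the spatial separation bound $|x_0-y_0|\leq(\oa/\ua)|h|$ rigorous and, crucially, \emph{uniform in} $\delta$: one must verify that the Gr\"onwall constants depend only on the bounds $\ua,\oa$ and not on the (blowing-up) Lipschitz constant of $\ad\sim\delta^{-1+\alpha}$. This is exactly why the comparison should be set up through the ratio of speeds $\ad(\eta(t,x_0))/\ad(\eta(t,y_0))\in[\ua/\oa,\oa/\ua]$ rather than through a naive Gr\"onwall estimate using $|\ad'|$; the former yields a $\delta$-independent constant while the latter would degenerate as $\delta\to0$. Handling this ratio correctly — equivalently, integrating the characteristic ODE in the form $dt=d\eta/\ad(\eta)$ and comparing the two integrals — is the technical heart of the lemma.
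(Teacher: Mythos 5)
Your proposal is correct and reaches both estimates with the same constants as the paper, but your spatial estimate follows a genuinely different route. The paper never compares two distinct characteristics: it first proves the temporal bound, using the contraction property $\|\wde(t,\cdot)-\wde(t+h,\cdot)\|_{L^\infty}\le\|\wde_0-\wde(h,\cdot)\|_{L^\infty}$ (two solutions, one starting from $\wde_0$ and one from $\wde(h,\cdot)$, propagate along the same characteristics) together with the speed bound $\ad\le\oa$, and then deduces the spatial bound \emph{from} the temporal one: since $\ad\ge\ua>0$, the characteristic through $(t,x+h)$ passed through the point $x$ at some earlier time $\tau^x\ge t-h/\ua$, so $\wde(t,x+h)=\wde(\tau^x,x)$ and a spatial increment is a temporal increment in disguise. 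You instead control the separation of the footpoints $x_0=\zeta(t,x)$, $y_0=\zeta(t,x+h)$ of two backward characteristics; your suggestion to do this through travel times rather than Gr\"onwall is exactly right and $\delta$-uniform: equating $\int_{x_0}^{x}dz/\ad(z)=t=\int_{y_0}^{x+h}dz/\ad(z)$ gives $\int_{x_0}^{y_0}dz/\ad(z)=\int_{x}^{x+h}dz/\ad(z)$, hence $(y_0-x_0)/\oa\le h/\ua$, i.e. $|x_0-y_0|\le(\oa/\ua)|h|$, and the H\"older bound on $\wde_0$ (which, as you note and the paper uses silently, is inherited from $w_0$ under mollification) finishes with the same constant $(\oa/\ua)^{\gamma_\infty}$. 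The paper's ordering (time first, space as a corollary) is shorter and avoids any two-characteristic lemma; your version is self-contained for the space estimate, isolates a flow estimate of independent interest, and correctly diagnoses why a naive Gr\"onwall argument through $\|\partial_x\ad\|_{\infty}$ would degenerate as $\delta\to 0$.

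One step to tighten: in the temporal part, the assertion that $\zeta(t,x)$ and $\zeta(t+h,x)$ differ by at most $\oa|h|$ does not follow from the speed bound alone --- for two genuinely different curves, backward separation over a time span $t$ is not controlled by the speed. It follows because the ODE \eqref{eq:char} is autonomous, so the backward characteristics from $(t,x)$ and $(t+h,x)$ are time-translates of one another; the two footpoints therefore lie on a \emph{single} characteristic arc traversed in time $h$, and only then does the speed bound give $|\zeta(t,x)-\zeta(t+h,x)|\le\oa|h|$. This is the same structural fact the paper encodes in its remark that any initial data is propagated along the same characteristics.
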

\begin{proof}
We consider the characteristics equation \eqref{eq:char} once more and note that it is independent of the initial data $w_0$ of $w$. Hence any initial data will be propagated along the same characteristics, and we have for the difference $\wh:=\wde_1-\wde_2$ corresponding to initial condition $\wh_0:=\wde_{0,1}-\wde_{0,2}$
\begin{equation*}
\wh(t,\eta(t,x_0))=\wh_0(x_0)
\end{equation*}
with $\eta$ defined in \eqref{eq:char}. In particular, taking $\wde_{2,0}:=\wde_1(h,\cdot)$ for some $h>0$ (the case $h<0$ is analoguous),  and omitting the index $1$ this implies
\begin{equation*}
\|\wde(t,\cdot)-\wde(t+h,\cdot)\|_{L^{\infty}}\leq \|\wde_0-\wde(h,\cdot)\|_{L^{\infty}}.
\end{equation*}
The characteristics equation, \eqref{eq:char}, implies for any $x\in \xdom$,
\begin{equation*}
x=\eta(h,x_0^x)=x_0^x+\int_0^h \ad(\eta(s,x_0^x))\, ds
\end{equation*}
for some $x_0^x$. By the assumption on the boundedness of $\ad$, \eqref{eq:ascona}, we can bound $x_0^x$ from above and below:
\begin{equation}\label{eq:egal}
x-h \oa\leq x-\int_0^h \ad(\eta(s,x^x_0))\, ds= x_0^x\leq x
\end{equation}
and thus for any $x\in\xdom$
\begin{align*}
 |\wde_0(x)-\wde(h,x)| &=  |\wde_0(x)-\wde_0(x_0^x))|\\
&\leq  \sup_{y\in[x-\oa h,x]} |\wde_0(x)-\wde_0(y))|\\
&\leq |\wde_0|_{C^{0,\gamma_\infty}}  \oa^{\gamma_\infty}h^{\gamma_\infty}\\
&\leq |w_0|_{C^{0,\gamma_\infty}}  \oa^{\gamma_\infty}h^{\gamma_\infty}
\end{align*}
by the assumption on the initial data. Taking the supremum over all $x\in\xdom$, we obtain
\begin{equation*}
\|\wde(t,\cdot)-\wde(t+h,\cdot)\|_{L^{\infty}}\leq \oa^{\gamma_\infty} \|w_0\|_{C^{0,\gamma_{\infty}}} h^{\gamma_{\infty}}
\end{equation*}
and thus the H\"older continuity in time. To prove the H\"older continuity in space, we note that, by the characteristics equation \eqref{eq:char} and the positivity of the coefficient $\ad$, we have
\begin{equation*}
\wde(x+h,t)=\wde(x,\tau^x),
\end{equation*}
for some $\tau^x<t$ such that 
\begin{equation}\label{eq:gugus}
x+h=x+\int_{\tau^x}^t \ad(\eta(s,x))\, ds
\end{equation}
(the characteristics starting at $(\tau^x,x)$ and passing through $(t,x+h)$). This allows us to bound $\tau^x$ from below:
\begin{equation*}
h=\int_{\tau^x}^t \ad(\eta(s,x))\, ds\geq (t-\tau^x) \ua
\end{equation*}
and therefore
\begin{equation}\label{eq:gaga}
\tau^x\geq t - h \ua^{-1}.
\end{equation}
This estimate is independent of $x$ and $\delta$.
Hence
\begin{align*}
|\wde(x,t)- \wde(x+h,t)|&=|\wde(x,t)- \wde(x,\tau^x)|\\
&\leq  \sup_{\tau\in [t-h\ua^{-1},t)}|\wde(x,t)- \wde(x,\tau)|\\
&\leq \|w_0\|_{C^{0,\gamma_{\infty}}} \left(\frac{\oa}{\ua}\right)^{\gamma_\infty} h^{\gamma_\infty},
\end{align*}
which yields the H\"older continuity in space.
\end{proof}
\begin{remark}\label{rem:moclp}
We note that the estimates in Lemma \ref{lem:hoeldercont} are independent of $\delta>0$ and the H\"older coefficient $\alpha$ of $a$. Moreover, if $\xdom$ is bounded, or the initial data $w_0$ has compact support, this Lemma implies that $\wde$ has a modulus of continuity in time, 
\begin{equation}
\label{seq2:mocwd2}
\nu_t^p(\wde(t,\cdot),\sigma):=\sup_{|h|\leq\sigma}\int_{\xdom} |\wde(t+h,x)-\wde(t,x)|^p\, dx\leq C \sigma^{p\gamma_p},
\end{equation}
and a modulus of continuity of the same order in space:
\begin{equation}
\label{seq2:mocwd2x}
\nu_x^p(\wde(t,\cdot),\sigma):=\sup_{|h|\leq\sigma}\int_{\xdom} |\wde(t,x+h)-\wde(t,x)|^p\, dx\leq C \sigma^{p\gamma_p}.
\end{equation}
with $\gamma_p=\gamma_\infty$ for all $p\in[1,\infty)$.
\end{remark}
\begin{remark}[Entropy identity]
Subtracting a constant $k$ from equation \eqref{seq1:weqd} and then multiplying by a regularized version of the sign function, we obtain the $L^1$-norm conservation property of the equation:
\begin{equation}\label{eq:l1cons}
\int_{\xdom} \frac{|\wde(t,x)-k|}{\ad(x)}\, dx=\int_{\xdom} \frac{|\wde_0(x)-k|}{\ad(x)}\, dx.
\end{equation}
Similarly, by multiplying with $(\wde-k)^{p-1}$ for $p$ even, we obtain conservation of $L^p$-norms:
\begin{equation}\label{eq:lpcons}
\int_{\xdom} \frac{|\wde(t,x)-k|^p}{\ad(x)}\, dx=\int_{\xdom} \frac{|\wde_0(x)-k|^p}{\ad(x)}\, dx.
\end{equation}
\end{remark}

The moduli of continuity of Lemma \ref{lem:hoeldercont} and Remark \ref{rem:moclp} are independent of $\delta>0$, which implies that the sequence of solutions $\{\wde\}_{\delta>0}$ is relatively compact in $L^p$, $p\in[1,\infty)$ (by Kolmogorov's compactness theorem~\cite[Theorem A.5]{HHNHRise07}), and due to the compact embedding of the H\"older spaces $C^{0,\beta_1}(\Dom)\subset\subset C^{0,\beta_2}(\Dom)$ for $\beta_2<\beta_1$ and bounded domains, also relatively compact in $C^{0,\gamma}(\Dom)$, for any $\gamma<\gamma_\infty$, and thus the limit function $w:=\lim_{\delta\rightarrow 0} \wde\in L^p(\Dom)\cap C^{0,\gamma_\infty}(\Dom)$, $p\in[1,\infty)$ is a weak solution of \eqref{eq:forw} with the same moduli of continuity in space and time. Moreover, the limit $w$ satisfies the entropy identities \eqref{eq:l1cons} and \eqref{eq:lpcons}.
\subsection{Approximation by an upwind scheme}
In order to compute numerical approximations to \eqref{eq:forw}, we
choose $\Delta x>0$ such that $\Nx:=|\xdom|/\Dx\in \N$ and discretize the spatial domain by a grid with
gridpoints $x_{j+1/2}:=j \Delta x$, $j\in \{1,\dots, \Nx\}$ and cell centers $x_{j}:=(j-1/2) \Delta x$, $1\leq j\leq  \Nx$. Furthermore, we
let
\begin{equation}\label{eq:cflcondition}
  0<\Delta t:=\theta \Dx \leq \frac{\Delta x} {\overline{a}}
\end{equation}
and set $t^n:=n\Delta t$, $0\leq n\leq N_T$, where $N_T$ is such that
$t^{N_T}=T$. We define the averaged quantities
\begin{equation}\label{eq:aav}
  a_j=\frac{1}{\Delta x}\int_{x_{j-1/2}}^{x_{j+1/2}} a(x)\, dx,\quad 1\leq j\leq  \Nx,
\end{equation}
and
\begin{equation}\label{eq:initdataapprox}
  w_j^0=\frac{1}{\Delta x}\int_{x_{j-1/2}}^{x_{j+1/2}} w_0(x)\, dx,\quad 1\leq j\leq  \Nx.
\end{equation}
Moreover, we denote, for a quantity $\sigma_j^n$, $j\in \mathbb{Z}$, $n=0,\dots, N_T$ defined on
the grid,
\begin{equation}\label{eq:Dt}
  D^+_t \sigma^n_j:=\frac{1}{\Dt}(\sigma^{n+1}_j-\sigma^n_j), \quad
  D^\pm_x \sigma^n_j=\pm\frac{1}{\Dx}(\sigma^{n}_{j\pm
    1}-\sigma^n_j),\quad D^c_x
  \sigma^n_j=\frac{1}{2\Dx}(\sigma^{n}_{j+1}-\sigma^n_{j-1}). 
\end{equation}
Then we define approximations $w_j^n$ by
\begin{equation}\label{eq:upwindscheme}
  \frac{D^+_t w^{n}_j}{a_j}=-D^-_x w_j^n, \quad 1\le j\le \Nx,\  0\le n\le N_T.
\end{equation}
Letting $u_j^n:=w_j^n/a_j$, this is equivalent to
\begin{equation*}
  D^+_t u_j^{n}=-D^-_x (a_j u_j^n),\quad 1\le j\le \Nx,\ 0\le n\le N_T.
\end{equation*}
which will yield an approximation to the solution $u(t,x)$ of
equation \eqref{eq:setup}.

\subsubsection{Estimates on the numerical approximation}
\label{sssec:estimates}
\begin{lemma}[Properties of the upwind scheme
  \eqref{eq:upwindscheme}]\label{lem:pdisc} 
  The approximations $w_j^n$, $1\le j\le \Nx$, $n=0,\dots, N_T$
  defined by the numerical scheme \eqref{eq:upwindscheme} have the
  following properties:
  \begin{enumerate}
  \item[(i)] Maximum principle:
    \begin{equation}\label{maxprinciple}
      \sup_{1\le j\le \Nx,1\le n\le N_T}{|w^n_j|}\le \sup_{1\le j\le \Nx}{|w_j^0|},
    \end{equation}
  \item[(ii)] Discrete entropy inequality in $L^1$:
    \begin{equation}\label{eq:entropydisc}
      \frac{|w^{n+1}_j- k|}{a_j}- \frac{|w^{n}_j- k|}{a_j}+\frac{\Dt}{\Dx}\bigl(|w^{n}_j- k|-|w^{n}_{j-1}- k|\bigr)\le 0
    \end{equation}
Discrete entropy inequality in $L^2$:
\begin{equation}\label{eq:entropydiscl2}
      \frac{|w^{n+1}_j- k|^2}{a_j}- \frac{|w^{n}_j- k|^2}{a_j}+\frac{\Dt}{\Dx}\bigl(|w^{n}_j- k|^2-|w^{n}_{j-1}- k|^2\bigr)\le 0
    \end{equation}
  \item[(iii)] Bound on the discrete $L^1$ and $L^2$-norms:
    \begin{equation}\label{eq:discl1}
      \Dx\sum_{j=1}^{\Nx} \frac{|w^{n}_j|}{a_j}\le \Dx \sum_{j=1}^{\Nx} \frac{|w^{0}_j|}{a_j},
    \end{equation}
    \begin{equation}\label{eq:L2disc}
      \Dx \sum_{j=1}^{\Nx} \frac{(w^{n}_j)^2}{a_j}\le \Dx \sum_{j=1}^{\Nx} \frac{(w^{0}_j)^2}{a_j}.
    \end{equation}
  \end{enumerate}
  for all $n=1,\dots, N_T$.
\end{lemma}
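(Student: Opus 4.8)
The plan is to establish each property directly from the scheme \eqref{eq:upwindscheme}, exploiting the fact that under the CFL condition \eqref{eq:cflcondition} the explicit update is a convex combination. First I would rewrite \eqref{eq:upwindscheme} in explicit update form. Since $D^+_t w_j^n = (w_j^{n+1}-w_j^n)/\Dt$ and $D^-_x w_j^n = (w_j^n - w_{j-1}^n)/\Dx$, the scheme reads
\begin{equation*}
w_j^{n+1} = w_j^n - a_j\frac{\Dt}{\Dx}\bigl(w_j^n - w_{j-1}^n\bigr) = \Bigl(1 - a_j\frac{\Dt}{\Dx}\Bigr) w_j^n + a_j\frac{\Dt}{\Dx}\, w_{j-1}^n.
\end{equation*}
The CFL condition \eqref{eq:cflcondition} gives $\Dt \le \Dx/\oa$, so $a_j\Dt/\Dx \le a_j/\oa \le 1$ because each cell average $a_j$ satisfies $\ua \le a_j \le \oa$ by \eqref{eq:ascona}. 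Hence both coefficients $1 - a_j\Dt/\Dx$ and $a_j\Dt/\Dx$ are nonnegative and sum to one: $w_j^{n+1}$ is a convex combination of $w_j^n$ and $w_{j-1}^n$. The maximum principle (i) then follows immediately by induction on $n$, since a convex combination cannot exceed the maximum of its arguments in absolute value.

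For the entropy inequalities (ii) I would subtract the constant $k$, writing the scheme for $w_j^{n+1}-k$ as the same convex combination of $w_j^n - k$ and $w_{j-1}^n - k$. For the $L^1$ version I would apply the triangle inequality to the convex combination, which yields
\begin{equation*}
|w_j^{n+1}-k| \le \Bigl(1 - a_j\frac{\Dt}{\Dx}\Bigr)|w_j^n - k| + a_j\frac{\Dt}{\Dx}\,|w_{j-1}^n - k|,
\end{equation*}
and then rearrange, dividing by $a_j$, to recover exactly \eqref{eq:entropydisc}. For the $L^2$ version I would instead use convexity of $t \mapsto t^2$ (Jensen's inequality for the convex combination), giving the analogous bound with squares in place of absolute values, and rearrange to obtain \eqref{eq:entropydiscl2}. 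The key structural fact both inequalities rely on is the nonnegativity of the convex weights, i.e.\ the CFL condition.

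The norm bounds (iii) I would derive by summing the entropy inequalities over $j$. Multiplying \eqref{eq:entropydisc} by $\Dx$ and summing over $1 \le j \le \Nx$, the flux-difference term telescopes, and by the periodic boundary conditions (or compact support) the boundary contributions cancel, leaving
\begin{equation*}
\Dx\sum_{j=1}^{\Nx}\frac{|w_j^{n+1}-k|}{a_j} \le \Dx\sum_{j=1}^{\Nx}\frac{|w_j^n-k|}{a_j};
\end{equation*}
taking $k=0$ and iterating in $n$ gives \eqref{eq:discl1}, and the identical argument applied to \eqref{eq:entropydiscl2} gives \eqref{eq:L2disc}. The main obstacle, though a mild one, is being careful with the telescoping sum across the periodic boundary: one must verify that the index $j-1$ wraps correctly so that $\sum_j \bigl(|w_j^n-k| - |w_{j-1}^n-k|\bigr) = 0$, which is where the periodicity assumption on $\xdom$ (stated at the start of Section~\ref{sec:intro}) is used. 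Everything else is routine manipulation once the convex-combination structure is in hand.
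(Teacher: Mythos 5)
Your proposal is correct, and parts (i) and (iii) coincide with the paper's argument (explicit convex-combination form under the CFL condition \eqref{eq:cflcondition}, then multiply by $\Dx$, sum over $j$ with periodic indexing, and set $k=0$). For part (ii), however, you take a genuinely different and more elementary route. The paper proves \eqref{eq:entropydisc} by the Crandall--Majda monotone-scheme device: it forms $\mathcal{H}(w^n_{j-1}\vee k, w^n_j\vee k)-\mathcal{H}(w^n_{j-1}\wedge k, w^n_j\wedge k)$, computes this difference explicitly, and uses monotonicity of $\mathcal{H}$ (guaranteed by the CFL condition) to sandwich $(w^{n+1}_j\vee k)/a_j$ and $(w^{n+1}_j\wedge k)/a_j$; for \eqref{eq:entropydiscl2} it multiplies the shifted scheme by $(w^n_j-k)$ and uses the identity $ab=\tfrac12(a^2+b^2-(a-b)^2)$, obtaining an energy \emph{equality} whose remainder term $\frac{\Dt}{2\Dx}\bigl(\frac{a_j\Dt}{\Dx}-1\bigr)(\wh^n_j-\wh^n_{j-1})^2$ is non-positive under CFL. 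You instead exploit linearity directly: since the convex weights sum to one, $w^{n+1}_j-k$ is the same convex combination of $w^n_j-k$ and $w^n_{j-1}-k$, so the triangle inequality gives the $L^1$ inequality and Jensen's inequality (convexity of $t\mapsto t^2$) gives the $L^2$ one. Your argument is shorter and cleaner in this linear setting; what the paper's machinery buys is generality and extra information --- the $\vee/\wedge$ argument is the one that extends to nonlinear monotone schemes for scalar conservation laws, and the energy identity exhibits the exact numerical dissipation rather than discarding it through a convexity inequality. Both routes use the same structural fact (nonnegativity of the weights, i.e.\ the CFL condition), and both yield precisely \eqref{eq:entropydisc} and \eqref{eq:entropydiscl2}, so your derivation of (iii) from (ii) then goes through unchanged.
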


\begin{proof}
  Writing \eqref{eq:upwindscheme} as
  \begin{equation}\label{eqwithH}
    \frac{w^{n+1}_j}{a_j}=\biggl(1-\frac{a_j\Delta t}{\Delta x}\biggr) \frac{w^{n}_j}{a_j}+\frac{a_j \Delta t}{\Delta x}\frac{w_{j-1}^n}{a_j}:=\mathcal{H}(w^n_{j-1},w^n_j),
  \end{equation}
  and taking the CFL-condition \eqref{eq:cflcondition} into account,
  we immediately see that $w^{n+1}_j/a_j$ is a convex combination of
  $w^n_j/a_j$ and $w_{j-1}^n/a_j$ and thus the approximations satisfy
  the maximum principle \eqref{maxprinciple}.

  To obtain the discrete version \eqref{eq:entropydisc} of a
  Kru\v{z}kov entropy inequality for the quantities $w_j^n$, we
  compute for a constant $k\in \R$,
  \begin{equation}\label{eq:juststh}
    \begin{aligned}
      \mathcal{H}(w^n_{j-1}\vee k,w^n_j\vee
      k)-&\mathcal{H}(w^n_{j-1}\wedge k,w^n_j\wedge k)\\
      &=\biggl(\frac{1}{a_j}-\frac{\Delta t}{\Delta x}\biggr)
      ({w^{n}_j\vee k}-{w^{n}_j\wedge k})+\frac{\Delta t}{\Delta
        x}({w_{j-1}^n\vee k}-w_{j-1}^n\wedge k)\\
      &=\biggl(\frac{1}{a_j}-\frac{\Delta t}{\Delta x}\biggr)
      |w^{n}_j-
      k|+\frac{\Delta t}{\Delta x}|w_{j-1}^n- k|\\
      &=\frac{|w^{n}_j- k|}{a_j}-\frac{\Dt}{\Dt}\bigl(|w^{n}_j-
      k|-|w^{n}_{j-1}- k|\bigr).
    \end{aligned}
  \end{equation}
Moreover, we note that by \eqref{eqwithH} and thanks to the
  CFL-condition \eqref{eq:cflcondition},
  \begin{align*}
    \mathcal{H}(w^n_{j-1}\vee k,w^n_j\vee k)&\ge
    \mathcal{H}(w^n_{j-1},w^n_j)= \frac{w^{n+1}_j}{a_j},\\ 
    \mathcal{H}(w^n_{j-1}\vee k,w^n_j\vee k)&\ge \mathcal{H}(k,k)=
    \frac{k}{a_j},
  \end{align*}
  thus
  \begin{equation*}
    \mathcal{H}(w^n_{j-1}\vee k,w^n_j\vee k)\ge \frac{w^{n+1}_{j}\vee k}{a_j},
  \end{equation*}
  and similarly
  \begin{equation*}
    \mathcal{H}(w^n_{j-1}\wedge k,w^n_j\wedge k)\le \frac{w^{n+1}_{j}\wedge k}{a_j}.
  \end{equation*}
  Combining this with \eqref{eq:juststh}, we obtain
  \eqref{eq:entropydisc}. Now we simply need to multiply the
  expression \eqref{eq:entropydisc} by $\Dx$, sum it over $j=1,\dots, \Nx$ and set $k=0$ to obtain the bound on the discrete
  $L^1(\xdom)$-norm of $w^n_j$,
  \eqref{eq:discl1}. 
To prove the $L^2$-entropy inequality, we note that the difference scheme \eqref{eq:upwindscheme} is equivalent to
\begin{equation}\label{eq:fdmK}
\frac{D^+_t \left(w^n_j-k\right)}{a_j}=-D^-_x\left(w^n_j-k\right)
\end{equation}
for any constant $k\in\R$, and then multiply both sides of equation \eqref{eq:upwindscheme} by $(w^n_j-k)$. Subsequently, we use that
\begin{equation}\label{eq:binom}
    ab=\frac{1}{2}(a^2+b^2-(a-b)^2),\quad a,b\in \R,
  \end{equation}
  once for the left hand side and once for the right hand side to get (we write $\wh^n_j:=w^n_j-k$ for convenience)
  \begin{equation*}
    \frac{1}{2 a_j}\bigl(
    (\wh^{n+1}_j)^2+(\wh_j^n)^2-(\wh^{n+1}_j-\wh^{n}_j)^2\bigr)=\frac{(\wh^n_j)^2}{a_j}
    +\frac{\Dt}{2
      \Dx}\bigl( -(\wh^n_j)^2+(\wh^n_{j-1})^2-(\wh^n_j-\wh^n_{j-1})^2\bigl). 
  \end{equation*}
  Rearranging terms and using \eqref{eq:fdmK} for the
  difference $\wh^{n+1}_j-\wh^{n}_j$, this reads
  \begin{equation*}
    \frac{(\wh^{n+1}_j)^2}{2 a_j}=\frac{(\wh^n_j)^2}{2 a_j}+\frac{\Dt}{2
      \Dx}\bigl( -(\wh^n_j)^2+(\wh^n_{j-1})^2\bigr)+\frac{\Dt}{2
      \Dx}\biggl(\frac{a_j \Dt}{\Dx}-1\biggr)(\wh^n_j-\wh^n_{j-1})^2. 
  \end{equation*}
from which the claim follows using the CFL-condition  \eqref{eq:cflcondition}. Summing the discrete $L^2$-inequality over $j$ and using induction over $n$, we furthermore obtain \eqref{eq:L2disc}.

\end{proof}
Thanks to the linearity of the transport equation, we obtain the
following corollary of Lemma~\ref{lem:pdisc}:
\begin{corollary}\label{cor:contraction}
  Let $w^n_j$ denote the approximations computed by the scheme
  \eqref{eq:upwindscheme}, \eqref{eq:initdataapprox} for initial
  data $w_0\in L^1(\xdom)\cap L^2(\xdom)\cap
  L^\infty(\overline{\xdom})$ and $v_j^n$ another approximation computed by
  \eqref{eq:upwindscheme} for initial data $v_0\in
  L^1(\xdom)\cap L^2(\xdom)\cap L^\infty(\overline{\xdom})$. Then
  we have
  \begin{equation}
    \label{eq:differences}
    \begin{split}
      \sup_{1\le j\le \Nx}{|w^n_j-v_j^n|}&\le \sup_{1\le j\le \Nx}{|w_j^0-v_j^0|}\le \|w_0-v_0\|_{\infty},\\
      \Dx \sum_j a_j^{-1}|w_j^n-v_j^n|&\le \Dx \sum_j a_j^{-1}|w^0_j-v_j^0|\le\|(w_0-v_0)/a\|_{L^1(\xdom)},\\
      \Dx \sum_j a_j^{-1}|w_j^n-v_j^n|^2&\le \Dx \sum_j
      a_j^{-1}|w_j^0-v_j^0|^2\le\|(w_0-v_0)/\sqrt{a}\|^2_{L^2(\xdom)},
    \end{split}
  \end{equation}
  for all $1\le n\le N_T$.
\end{corollary}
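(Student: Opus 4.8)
The proof rests on the linearity of the scheme \eqref{eq:upwindscheme}. Since the weights $a_j$ in \eqref{eq:upwindscheme} depend only on the coefficient $a$ and not on the data, the difference $z_j^n := w_j^n - v_j^n$ solves the very same scheme,
\begin{equation*}
\frac{D^+_t z^n_j}{a_j} = -D^-_x z^n_j, \quad 1\le j\le \Nx,\ 0\le n\le N_T,
\end{equation*}
with initial data $z_j^0 = w_j^0 - v_j^0$. Consequently, each of the three \emph{leftmost} inequalities in \eqref{eq:differences} is simply Lemma~\ref{lem:pdisc} applied to $z_j^n$: the first is the maximum principle \eqref{maxprinciple}, the second is the discrete $L^1$-bound \eqref{eq:discl1}, and the third is the discrete $L^2$-bound \eqref{eq:L2disc}. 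No further work is needed for these.

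It remains to bound the discrete weighted norms of the \emph{initial} data by the continuous norms of $g := w_0 - v_0$. Here I would invoke the cell-average formulas \eqref{eq:aav} and \eqref{eq:initdataapprox}, which give $a_j = \Dx^{-1}\int_{I_j} a\,dx$ and $z_j^0 = \Dx^{-1}\int_{I_j} g\,dx$, where $I_j := [x_{j-1/2},x_{j+1/2}]$. For the $L^\infty$ estimate this is immediate, since $\abs{z_j^0}\le \Dx^{-1}\int_{I_j}\abs{g}\,dx\le \norm{g}_{\infty}$ for every $j$. For the $L^2$ estimate I would apply the Cauchy--Schwarz inequality on each cell,
\begin{equation*}
\biggl(\int_{I_j} g\,dx\biggr)^2 \le \biggl(\int_{I_j} a\,dx\biggr)\biggl(\int_{I_j} \frac{g^2}{a}\,dx\biggr) = \Dx\, a_j \int_{I_j} \frac{g^2}{a}\,dx,
\end{equation*}
so that $\Dx\, a_j^{-1}\abs{z_j^0}^2\le \int_{I_j} g^2/a\,dx$; summing over $j$ yields the rightmost inequality of the third line.

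The $L^1$ estimate is the step requiring the most care, and I expect it to be the main obstacle. The natural plan is again to argue cellwise: from $\abs{z_j^0}\le \Dx^{-1}\int_{I_j}\abs{g}\,dx$ together with the Jensen inequality $a_j^{-1}=\bigl(\Dx^{-1}\int_{I_j} a\,dx\bigr)^{-1}\le \Dx^{-1}\int_{I_j} a^{-1}\,dx$ (convexity of $t\mapsto 1/t$), one wants to recover $\Dx\, a_j^{-1}\abs{z_j^0}\le \int_{I_j} \abs{g}/a\,dx$ and then sum. The delicate point is that, unlike in the $L^2$ case where Cauchy--Schwarz ties $a_j$ and $g^2/a$ together exactly, in $L^1$ one must control the interaction between the oscillation of $a$ and that of $g$ within a single cell, so the reciprocal of the cell average $a_j^{-1}$ and the weight $\abs{g}$ do not decouple as cleanly. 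I would therefore handle this comparison by the cellwise convexity argument above, checking it most carefully; as a safe fallback, the cruder bound $\Dx\sum_j a_j^{-1}\abs{z_j^0}\le \ua^{-1}\norm{g}_{L^1(\xdom)}$ (using $a_j\ge \ua$) is always available should the sharp weighted form need adjustment.
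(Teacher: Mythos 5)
Your handling of the three leftmost inequalities in \eqref{eq:differences} is exactly the paper's proof: by linearity the differences $z_j^n:=w_j^n-v_j^n$ satisfy the scheme \eqref{eq:upwindscheme}, so the maximum principle \eqref{maxprinciple} and the bounds \eqref{eq:discl1}, \eqref{eq:L2disc} of Lemma \ref{lem:pdisc} apply verbatim. The paper's proof consists of this observation and nothing more; the rightmost comparisons with the continuous norms of $g:=w_0-v_0$ are not addressed there at all. Your extra work on them is correct for the first and third lines: the $L^\infty$ bound is immediate from the cell-average definition, and your Cauchy--Schwarz argument $\bigl(\int_{I_j}g\,dx\bigr)^2\le\bigl(\int_{I_j}a\,dx\bigr)\bigl(\int_{I_j}g^2/a\,dx\bigr)$, with $I_j:=[x_{j-1/2},x_{j+1/2})$, gives the weighted $L^2$ bound exactly.

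Your suspicion about the $L^1$ line is justified, and the difficulty lies not in your argument but in the statement itself: the rightmost inequality of the second line of \eqref{eq:differences} is false as written. Take $\xdom=[0,2]$, $\Dx=1$, $g=\mathbf{1}_{[0,1/2]}$ (or a continuous approximation of it), and $a$ continuous, periodic, satisfying \eqref{eq:ascona}, with $a\approx M$ on $[0,1/2]$ and $a\approx\epsilon$ on the rest of the first cell. Then $\Dx\sum_j a_j^{-1}|z_j^0|\approx \tfrac{1}{M+\epsilon}$ while $\|g/a\|_{L^1(\xdom)}\approx\tfrac{1}{2M}$, so the inequality fails whenever $\epsilon<M$. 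The obstruction is precisely the correlation issue you identified: $|g|$ is concentrated where $a$ is large, and the Jensen bound $a_j^{-1}\le\Dx^{-1}\int_{I_j}a^{-1}dx$ cannot repair this, since in the same example $\bigl(\Dx^{-1}\int_{I_j}a^{-1}dx\bigr)\bigl(\int_{I_j}|g|\,dx\bigr)$ exceeds $\int_{I_j}|g|/a\,dx$ by a factor of order $M/\epsilon$. What is true, and is all the paper ever needs (the corollary enters only through Lemma \ref{lem:mocdisc}, where constants are absorbed anyway), is the estimate with a harmless factor: since $a_j\ge\inf_{I_j}a\ge a(x)-\|a\|_{C^{0,\alpha}}\Dx^\alpha$ for $x\in I_j$, one gets $\Dx\sum_j a_j^{-1}|z_j^0|\le\bigl(1+C\Dx^\alpha/\ua\bigr)\|g/a\|_{L^1(\xdom)}$, or, more crudely, your fallback $\ua^{-1}\|g\|_{L^1(\xdom)}$. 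In summary, your proposal reproduces the paper's actual proof in full, and the obstacle you flagged is a genuine (if harmless) imprecision in the corollary's statement rather than a gap in your reasoning.
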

\begin{proof}
  This follows from the fact that the differences $r_j^n:=w_j^n-v_j^n$
  satisfy \eqref{eq:upwindscheme} due to the linearity of the scheme,
  together with Lemma \ref{lem:pdisc}.
\end{proof}
Defining the piecewise constant approximations
\begin{equation}\label{eq:defapproxfcn}
  w_{\Delta x}(t,x):=w^n_j,\quad (t,x)\in [t^n,t^{n+1})\times [x_{j-1/2},x_{j+1/2}),
\end{equation}
this corollary enables us to show that the piecewise constant function $w_{\Dx}$ has a modulus of continuity in time:
\begin{lemma}\label{lem:mocdisc}
The piecewise constant functions $w_{\Dx}$ defined in \eqref{eq:defapproxfcn} have a modulus of continuity in time if the initial data $w_0$ satisfies \eqref{eq:mocinit} for $p=1$: 
\begin{equation}\label{eq:moc1disc}
\sup_{|h|\leq \sigma} \int_{\xdom} |w_{\Dx}(t+h,x)-w_{\Dx}(t,x)|\, dx\leq C (\sigma+\Dx)^{\gamma_1}
\end{equation}
or \eqref{eq:mocinit} for $p=2$:
\begin{equation}
\sup_{|h|\leq \sigma} \int_{\xdom} |w_{\Dx}(t+h,x)-w_{\Dx}(t,x)|^2\, dx\leq C (\sigma+\Dx)^{2\gamma_2}.
\end{equation}
If the initial data is H\"{o}lder continuous with exponent $\gamma_\infty$, the solution is approximately H\"{o}lder continuous in time with the same exponent, i.e. 
\begin{equation*}
\sup_{|h|\leq\sigma} \|w_{\Dx}(t+h,x)-w_{\Dx}(t,x)\|_{L^\infty}\leq C (\sigma+\Dx)^{\gamma_\infty}.
\end{equation*}

\end{lemma}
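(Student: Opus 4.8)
The plan is to reduce the time-modulus of continuity at an arbitrary time level to the case $t=0$ via the discrete semigroup structure of the scheme and the contraction estimates of Corollary~\ref{cor:contraction}, and then to control the difference $w^{\mu}-w^{0}$ directly from the modulus of continuity of the initial data, exploiting that a single step of \eqref{eq:upwindscheme} is a convex combination under the CFL-condition \eqref{eq:cflcondition}. First I would fix $t$ and $h$ (the case $h<0$ being symmetric), locate the time indices $n,n+\mu$ with $\mu\le |h|/\Dt+1$, and write $\int_{\xdom}|w_{\Dx}(t+h,x)-w_{\Dx}(t,x)|\,dx=\Dx\sum_j|w^{n+\mu}_j-w^{n}_j|$. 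Since the coefficients $a_j$ do not depend on $n$, the scheme \eqref{eqwithH} generates a discrete semigroup, so that $w^{n+\mu}=S^{n}(w^{\mu})$ and $w^{n}=S^{n}(w^{0})$, where $S^{n}$ denotes the $n$-fold iteration of \eqref{eqwithH}. Applying the $L^{1}$ contraction of Corollary~\ref{cor:contraction} to the two data $w^{\mu}$ and $w^{0}$ (and the equivalence of the $a_j^{-1}$-weighted and unweighted norms, which costs a factor $\oa/\ua$) bounds the expression by $C\,\Dx\sum_j|w^{\mu}_j-w^{0}_j|$, reducing everything to the initial time.

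Next I would bound $w^{\mu}_j-w^{0}_j$. Writing \eqref{eqwithH} as $w^{n+1}_j=(1-\lambda_j)w^{n}_j+\lambda_j w^{n}_{j-1}$ with $\lambda_j=a_j\Dt/\Dx\in[0,1]$ by \eqref{eq:cflcondition}, each step is a convex combination of neighbouring values, so iterating $\mu$ times shows that $w^{\mu}_j$ is a convex combination of $w^{0}_{j-\mu},\dots,w^{0}_j$. Hence $|w^{\mu}_j-w^{0}_j|\le\sup_{0\le i\le\mu}|w^{0}_{j-i}-w^{0}_j|$. Because the $w^{0}_j$ are the cell averages \eqref{eq:initdataapprox}, this discrete difference is controlled by the modulus of continuity of $w_0$ at scale $\mu\Dx$: summing over $j$ and unfolding the averages gives $\Dx\sum_j\sup_{0\le i\le\mu}|w^{0}_{j-i}-w^{0}_j|\le\int_{\xdom}\sup_{|\tau|\le\mu\Dx}|w_0(x-\tau)-w_0(x)|\,dx\le(\mu\Dx)^{\gamma_1}$ by assumption \eqref{eq:mocinit}. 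Finally, the CFL-relation $\Dt=\theta\Dx$ together with $\mu\Dt\le|h|+\Dt\le\sigma+\Dt$ yields $\mu\Dx\le\sigma/\theta+\Dx\le C(\sigma+\Dx)$, which gives the claimed rate. The $L^{2}$ case is handled identically after applying Jensen's inequality to the averages (so that the squares pass through the integral) and using the $L^{2}$ contraction, while the $L^{\infty}$ case follows because the cell averages inherit the pointwise Hölder bound \eqref{eq:mocminfty}, namely $|w^{0}_{j-i}-w^{0}_j|\le|w_0|_{C^{0,\gamma_\infty}}(i\Dx)^{\gamma_\infty}$.

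The step I expect to be most delicate is precisely the reduction to $t=0$ in the variable-coefficient setting. The scheme is \emph{not} translation invariant, so one cannot simply compare $w_{\Dx}(t+h,\cdot)$ with a spatial translate of $w_{\Dx}(t,\cdot)$, and the direct route through the spatial modulus of continuity of $w^{n}$ fails because there is no evident bound of the spatial regularity at time $t^{n}$ by that of the data. Equally, a naive telescoping $\sum_{l}\|w^{l+1}-w^{l}\|$ loses a factor $\mu\sim h/\Dt$ and destroys the rate. What makes the argument work is that one never needs the precise (binomial) concentration of the stencil weights that is available only for constant coefficients: combining the $L^{1}/L^{2}/L^{\infty}$ contraction, which absorbs the arbitrary time level, with the convexity of a single step and the CFL-bounded numerical domain of dependence (of width $\mu\Dx\lesssim\sigma$) already captures the initial modulus of continuity at the correct scale, uniformly in the roughness exponent $\alpha$ of $a$.
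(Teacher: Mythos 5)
Your proposal is correct and follows essentially the same route as the paper: the paper likewise writes the iterated scheme as a convex combination $w^{k}_j=\sum_{\ell=j-k}^{j}\lambda^{k}_{\ell,j}w^{0}_{\ell}$ (with nonnegative weights summing to one under the CFL-condition), bounds $\Dx\sum_j|w^k_j-w^0_j|$ by the modulus of continuity of $w_0$ at scale $k\Dx$, and then invokes Corollary~\ref{cor:contraction} to transfer this bound from $t=0$ to an arbitrary time level. The only difference is presentational (you apply the contraction step before, rather than after, the $t=0$ estimate, and use Jensen in place of the paper's double-sum trick in $L^2$), which does not change the argument.
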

\begin{proof}
We observe that 
\begin{align*}
w^k_j&=\left(1-\lambda a_j\right) w^{k-1}_j+\lambda a_j w^{k-1}_{j-1}\\
&= \left(1-\lambda a_j\right)\left\{ \left(1-\lambda a_j\right) w^{k-2}_j+\lambda a_j w^{k-2}_{j-1}\right\}+\lambda a_j \left\{\left(1-\lambda a_{j-1}\right) w^{k-2}_{j-1}+\lambda a_{j-1}w^{k-2}_{j-2}\right\} \\
&= \dots = \sum_{\ell=j-k}^j \lambda_{\ell,j}^k w^0_{\ell}
\end{align*}
where we have denoted $\lambda:=\Dt/\Dx$ and $\lambda_{\ell,j}^k$ is inductively defined as follows:
\begin{definition}\label{def:lambda}
We let $s_\ell:=\lambda a_\ell$ and $v_\ell=(1-\lambda a_\ell)$ for all $\ell\in\Z$. Then we define $\lambda_{\ell,j}^k$ recursively by
\begin{align}\label{eq:lambdadef}
\begin{split}
\lambda_{\ell,j}^k=\begin{cases}
1,& \ell=j,\, k=0;\\
0, &k+\ell<j\, \mathrm{or}\, \ell>j,\\
s_{\ell}\lambda_{\ell,j-1}^{k-1}+v_{\ell} \lambda_{\ell,j}^{k-1}; &\mathrm{otherwise}.
\end{cases}
\end{split}
\end{align}
\end{definition}
\begin{remark}
	An explicit expression for the coefficients $\lambda_{\ell,j}^k$ for the third case is given by
	\begin{equation*}
	\lambda_{j-\ell,j}^k=\prod_{n=j-\ell+1}^j s_n\sum_{\substack{m_i\geq 0,\\ \sum_{i=1}^{l+1}m_i=k-\ell}} v_j^{m_1}\cdot . . . \cdot  v_{j-\ell}^{m_{\ell+1}}.
	\end{equation*}
\end{remark}

\begin{claim}\label{cl:bla}
The coefficients $\lambda_{\ell,j}^k$ satisfy
\begin{equation}\label{eq:sumlambda}
\sum_{\ell=j-k}^k \lambda_{\ell,j}^k =1
\end{equation}
and $\lambda_{\ell,j}^k\geq 0$.
\end{claim}
\begin{proof}[Proof of Claim]
Equation \eqref{eq:sumlambda} follows by induction and using that $v_\ell+s_\ell=1$ for any $\ell\in\Z$. That the $\lambda_{\ell,j}^k$ are nonnegative follows from the CFL-condition \eqref{eq:cflcondition}.
\end{proof}
Thus we have, 
\begin{align*}
\int_{\xdom} |w_{\Dx}(k\Dt,x)-w_{\Dx}(0,x)| dx &= \Dx \sum_j |w^k_j -w^0_j|\\
&=\Dx\sum_j \bigg|\sum_{\ell=j-k}^j \lambda^k_{\ell,j} w^0_\ell -w^0_j\bigg|\\
&\leq \Dx\sum_j \sum_{\ell=j-k}^j \lambda^k_{\ell,j}| w^0_\ell -w^0_j|\\
&\leq \Dx \sum_j \sum_{\ell=j-k}^j \lambda^k_{\ell,j}\max_{j-k\leq m\leq j} |w^0_m -w^0_j|\\
&=\Dx \sum_j \max_{j-k\leq m\leq j} |w^0_m -w^0_j|\\
&\leq \int_{\xdom} \sup_{|h|\leq (k+1)\Dx} |w_0(x+h)-w_0(x)|\, dx\\
&\leq C (\Dx k)^{\gamma_1}
\end{align*}
where we used Claim \ref{cl:bla} for the first inequality and the third equality, and the assumption on the initial data, \eqref{eq:mocinit}, in the last inequality.
Similarly, we compute in the $L^2$-setting,
\begin{align*}
\int_{\xdom} |w_{\Dx}(k\Dt,x)-w_{\Dx}(0,x)|^2 dx &= \Dx \sum_j |w^k_j -w^0_j|^2\\
&=\Dx\sum_j \bigg|\sum_{\ell=j-k}^j \lambda^k_{\ell,j} w^0_\ell -w^0_j\bigg|^2\\
&\leq \Dx\sum_j \sum_{\ell_1=j-k}^j \sum_{\ell_2=j-k}^j\lambda^k_{\ell_1,j}\lambda^k_{\ell_2,j}| w^0_{\ell_1} -w^0_j|| w^0_{\ell_2} -w^0_j|\\
&\leq \Dx \sum_j \sum_{\ell_1=j-k}^j \sum_{\ell_2=j-k}^j\lambda^k_{\ell_1,j}\lambda^k_{\ell_2,j}\max_{j-k\leq m\leq j} |w^0_m -w^0_j|^2\\
&=\Dx \sum_j \max_{j-k\leq m\leq j} |w^0_m -w^0_j|^2\\
&\leq \int_{\xdom} \sup_{|h|\leq (k+1)\Dx} |w_0(x+h)-w_0(x)|^2\, dx\\
&\leq C (\Dx k)^{2\gamma_2}
\end{align*}
Then applying Corollary \ref{cor:contraction}, we conclude. The approximate H\"{o}lder continuity in time follows in a very similar way, instead of summing over $j$, we take the maximum over all $j$.
\end{proof}

\begin{remark}
If $w_0$ has bounded variation, obtaining that the solution has bounded variation and so $L^1$-moduli of continuity in time and space is much easier. Similarly, if the initial data has a modulus of continuity of $\gamma_2=1$ in $L^2$, obtaining a rate is easier. The argument is similar to the one which will be outlined for the linear wave equation in Section \ref{sec:1d} but less technical.
\end{remark}

\subsection{A convergence rate in $L^1$}\label{ssec:l1rate}
In this section, we will prove a rate of convergence in $L^1$ of the numerical scheme to the limit of solutions of \eqref{eq:forwdelta} as $\delta\rightarrow 0$. Our approach is based on the doubling of variables technique developed by S.~N.~Kru\v{z}kov~\cite{k1970} for nonlinear scalar conservation laws. The reason why we take this approach is the possible nondifferentiability of the solutions with which the doubling of variables technique can deal well.
For this purpose, we use the test function $\omega_{\delta}$ from \eqref{eq:omegadelta}, let $0<\nu<\tau<T$ and $\epst,\epsx>0$ such that $0<2\epst<\min\{\nu,T-\tau\}$ and $\Dt,\Dx<\min\{\epst,\epsx\}$ and define the function $\Omega:\Dom^2\rightarrow \R$ by
\begin{equation}\label{eq:testfcn1}
  \Omega(t,s,x,y)=\mathbf{1}_{[\nu,\tau)}(t)\omega_{\epst}(t-s)\omega_{\epsx}(x-y).
\end{equation}
We note that the (smooth) solution to \eqref{eq:forwdelta} satisfies
\begin{equation}\label{eq:kruz1}
\int_{\Dom} \frac{|\wde(s,y)-k|}{\ad(y)}\Omega_s+|\wde(s,y)-k|\Omega_y \,dyds=0,
\end{equation}
whereas the approximations $\wdi$ satisfy, by the discrete entropy inequality \eqref{eq:entropydisc}
\begin{equation}\label{eq:kruz2}
\int_{\Dom} \frac{|\wdi(t,x)-\ell|}{a_{\Dx}(x)}D^-_t\Omega+|\wdi(t,x)-\ell|D^+_x\Omega \,dxdt\geq 0,
\end{equation}
where we have denoted
\begin{equation*}
    a_\Dx(x)=a_j, \quad x\in [x_{j-1/2},x_{j+1/2}).
  \end{equation*}

\begin{theorem}\label{lem:ratetransport}
  Let $a\in C^{0,\alpha}(\xdom)$ satisfy
  \eqref{eq:ascona}. Denote $w:=\lim_{\delta\rightarrow 0}w^\delta$ the solution of \eqref{eq:forw} and
  $w_\Dx$ the numerical approximation computed by scheme
  \eqref{eq:upwindscheme} and defined in
  \eqref{eq:defapproxfcn}. Assume that the initial data $w_0\in
  L^1(\xdom)$ and is H\"{o}lder continuous with exponent $\gamma_{\infty}>0$. 
  Then $w_\Dx(t,\cdot)$ converges to the solution $w(t,\cdot)$,
  $0<t<T$, at (at least) the rate
 \begin{equation}\label{eq:rae1d}
    \|(w-w_{\Dx})(t,\cdot)\|_{L^1(\xdom)}\le C\, \Dx^{(\gamma_\infty\alpha)/(\gamma_\infty\alpha+2-\gamma_\infty)}+C\|(w_0-w_\Dx(0,\cdot))\|_{L^1(\xdom)},
  \end{equation}
   where $C$ is a constant depending on $\ua$,
  $\oa$, $\|a\|_{C^{0,\alpha}}$ and $T$ but not on $\Dx$.
\end{theorem}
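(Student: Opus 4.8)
The plan is to run S.~N.~Kru\v{z}kov's doubling of variables, comparing the regularized exact solution $\wde$ against the numerical approximation $\wdi$ and letting $\delta\rightarrow0$ only at the very end, using $w=\lim_{\delta\rightarrow0}\wde$. Since $\wde$ is a classical solution of a linear equation it satisfies the entropy \emph{equality} \eqref{eq:kruz1}, while $\wdi$ satisfies the discrete entropy \emph{inequality} \eqref{eq:kruz2}. Following the standard recipe I take the Kru\v{z}kov constant $k=\wdi(t,x)$ in \eqref{eq:kruz1} and integrate over $(t,x)\in\Dom$, take $\ell=\wde(s,y)$ in \eqref{eq:kruz2} and integrate over $(s,y)\in\Dom$, and add the two relations. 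Since \eqref{eq:kruz1} is an equality and \eqref{eq:kruz2} an inequality, the sum is a single inequality for the double integral of $\abs{\wde(s,y)-\wdi(t,x)}$ tested against the derivatives of $\Omega$ from \eqref{eq:testfcn1}.

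Because $\Omega$ depends on its arguments only through $t-s$ and $x-y$, the leading derivatives almost cancel: on the smooth part $\partial_s\Omega=-\partial_t\Omega$ and $\partial_y\Omega=-\partial_x\Omega$. Writing $D^-_t\Omega=\partial_t\Omega+(D^-_t\Omega-\partial_t\Omega)$ and $D^+_x\Omega=\partial_x\Omega+(D^+_x\Omega-\partial_x\Omega)$, three contributions survive. First, the jump of the cutoff $\mathbf{1}_{[\nu,\tau)}(t)$ produces, once the mollifiers contract, the boundary terms $\int_{\xdom}\abs{\wde(\nu,\cdot)-\wdi(\nu,\cdot)}/a_{\Dx}$ minus $\int_{\xdom}\abs{\wde(\tau,\cdot)-\wdi(\tau,\cdot)}/a_{\Dx}$. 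Second, the mismatch of the two coefficients, $1/a_{\Dx}(x)-1/\ad(y)$, multiplies the smooth time derivative of $\Omega$. Third, the finite difference truncation errors $D^-_t\Omega-\partial_t\Omega$ and $D^+_x\Omega-\partial_x\Omega$ remain. Rearranging the resulting inequality gives $\int_{\xdom}\abs{\wde(\tau,\cdot)-\wdi(\tau,\cdot)}/a_{\Dx}\le\int_{\xdom}\abs{\wde(\nu,\cdot)-\wdi(\nu,\cdot)}/a_{\Dx}+(\text{coefficient term})+(\text{truncation terms})+(\text{smearing})$; note that in $L^1$ the estimate is direct and no Gr\"onwall argument is needed.

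It remains to estimate the three error sources and optimize. The smearing incurred by replacing the mollified solution values by their diagonal values is controlled by $\epst^{\gamma_\infty}+\epsx^{\gamma_\infty}$ using the time and space moduli of continuity of $\wde$ (Lemma \ref{lem:hoeldercont}) and of $\wdi$ (Lemma \ref{lem:mocdisc}). On the support $\abs{x-y}\le\epsx$, the H\"older continuity of $a$ together with the averaging and mollification bounds gives $\abs{1/a_{\Dx}(x)-1/\ad(y)}\le C(\Dx^\alpha+\delta^\alpha+\epsx^\alpha)$; the delicate point is that this factor multiplies $\partial_t\Omega$, which is of size $1/\epst$. The key step is to exploit $\int\omega'_{\epst}=0$ to rewrite $\abs{\wde(s,y)-\wdi(t,x)}$ as its increment in $s$, thereby trading the factor $1/\epst$ against the time-H\"older modulus of $\wde$ and obtaining a coefficient term bounded by $C(\Dx^\alpha+\delta^\alpha+\epsx^\alpha)\epst^{\gamma_\infty-1}$. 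The same modulus trick applied to the truncation terms yields a bound $C\Dx\,\epsx^{\gamma_\infty-2}+C\Dt\,\epst^{\gamma_\infty-2}$. Letting $\delta\rightarrow0$ (so the $\delta^\alpha$ terms vanish and $\wde\to w$) and $\nu\rightarrow0$ (producing the term $\norm{w_0-\wdi(0,\cdot)}_{L^1(\xdom)}$), and using the CFL relation $\Dt=\theta\Dx$, I am left with a total error of the form $\epsx^{\gamma_\infty}+\epst^{\gamma_\infty}+\epsx^\alpha\epst^{\gamma_\infty-1}+\Dx\,\epsx^{\gamma_\infty-2}+\Dx\,\epst^{\gamma_\infty-2}$. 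Minimizing this by first choosing $\epst=\epsx^\alpha$ (which balances the coefficient contribution against the time smearing, both then of size $\epsx^{\gamma_\infty\alpha}$) and then $\epsx=\Dx^{1/(\gamma_\infty\alpha+2-\gamma_\infty)}$ (which balances this leading term against the spatial truncation $\Dx\,\epsx^{\gamma_\infty-2}$) produces exactly the exponent $(\gamma_\infty\alpha)/(\gamma_\infty\alpha+2-\gamma_\infty)$ in \eqref{eq:rae1d}.

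The main obstacle is precisely the rough coefficient entering through the second term. In the classical constant coefficient Kuznetsov estimate this term is absent; here the time derivative of the test function carries the factor $1/a_{\Dx}(x)-1/\ad(y)$, which is only $O((\Dx+\epsx)^\alpha)$ yet is multiplied by $\partial_t\Omega=O(1/\epst)$. Controlling it forces one to use the exact cancellation $\int\omega'_{\epst}=0$ and the sharp time-H\"older bound of Lemma \ref{lem:hoeldercont} to convert the singular factor $1/\epst$ into $\epst^{\gamma_\infty-1}$; similarly the truncation terms must be treated with the moduli of continuity rather than a crude supremum bound, otherwise a full power of $\epsx$ and $\epst$ is lost and the rate degrades. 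Once these gains are secured, the only remaining work is the coupled optimization of $(\delta,\epst,\epsx)$ against $\Dx$, which is what yields the non-obvious balance and the stated exponent.
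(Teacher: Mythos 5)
Your proposal is correct and follows essentially the same route as the paper's own proof: the identical Kru\v{z}kov doubling with $k=\wdi(t,x)$ and $\ell=\wde(s,y)$, the same split into boundary terms (from the jump of $\mathbf{1}_{[\nu,\tau)}$), the coefficient-mismatch term, and the two truncation-error terms, the same cancellation trick (subtracting integrals that vanish because the mollifier derivatives integrate to zero in $s$ resp.\ $y$) combined with the H\"older moduli of Lemma \ref{lem:hoeldercont} and Lemma \ref{lem:mocdisc}, and the identical parameter choices $\epst=\epsx^{\alpha}$, $\epsx=\Dx^{1/(\alpha\gamma_\infty+2-\gamma_\infty)}$ yielding the stated exponent. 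The only cosmetic difference is that you send $\nu\rightarrow 0$ while the paper sets $\nu=3\epst$, which has the same effect of absorbing the $\nu^{\gamma_\infty}$ contribution.
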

\begin{proof}

Inserting $\wdi(t,x)$ for $k$ and $\wde(s,y)$ for $\ell$ in \eqref{eq:kruz1} and \eqref{eq:kruz2}, and integrating the respective equations over $(t,x)\in \Dom$ and $(s,y)\in \Dom$ respectively, and adding up, we have (for convenience, we will omit writing the arguments of $\wdi=\wdi(t,x)$, $\wde=\wdi(s,y)$, $\ad=\ad(y)$ and $a_{\Dx}=a_{\Dx}(x)$ in the following)
\begin{equation}\label{eq:figdi}
\int_{\Dom^2} {|\wdi-\wde|}\left(\frac{D^-_t \Omega}{a_{\Dx}}+\frac{\Omega_s}{\ad}\right)+|\wdi-\wde|\left(D^+_x\Omega +\Omega_y\right)\,\duz\geq 0,
\end{equation}
where $\duz:=dx dy dt ds$.
We have
\begin{align}\label{eq:Domega}
\begin{split}
D^-_t\Omega&=D^-_t \mathbf{1}_{[\nu,\tau)}(t) \omega_{\epst}(t-\Dt-s)+ \mathbf{1}_{[\nu,\tau)}(t) D^-_t\omega_{\epst}(t-s)\\
&=D^-_t \mathbf{1}_{[\nu,\tau)}(t) \omega_{\epst}(t-\Dt-s)- \mathbf{1}_{[\nu,\tau)}(t) D^+_s\omega_{\epst}(t-s),
\end{split}
\end{align}
so that we can rewrite equation \eqref{eq:figdi} as
\begin{align}\label{eq:figdi2}
\begin{split}
&A+B+D+E\\
&\quad :=\int_{\Dom^2} \frac{|\wdi-\wde|}{a_{\Dx}}\mathbf{1}_{[\nu,\tau)}(t)\omega_{\epsx}\left(\partial_s\omega_{\epst}-D^+_s\omega_{\epst}\right)\, \duz\\
 &\quad\quad+ \int_{\Dom^2} \frac{|\wdi-\wde|}{a_{\Dx}\ad}\mathbf{1}_{[\nu,\tau)}(t)\omega_{\epsx} \partial_s \omega_{\epst}\left(a_{\Dx}-\ad\right)\, \duz\\
&\quad\quad\quad+\int_{\Dom^2} |\wdi-\wde|\mathbf{1}_{[\nu,\tau)}(t)\omega_{\epst}\left(\partial_y\omega_{\epsx}+ D^+_x\omega_{\epsx}\right)\, \duz \\
&\quad\quad\quad\quad+\int_{\Dom^2} \frac{|\wdi-\wde|}{a_{\Dx}}\omega_{\epsx}\omega_{\epst}(t-s-\Dt) D^-_t\mathbf{1}_{[\nu,\tau)}(t)\, \duz\\
&\quad\quad\quad\quad\quad\geq 0,
\end{split}
\end{align}
We note that
\begin{equation}\label{eq:Domega2}
D^-_t \mathbf{1}_{[\nu,\tau)}(t)=\frac{1}{\Dt}\mathbf{1}_{[\nu,\nu+\Dt)}(t)-\frac{1}{\Dt}\mathbf{1}_{[\tau,\tau+\Dt)}(t),
\end{equation}
which means that we can rewrite \eqref{eq:figdi2} as
\begin{multline}\label{eq:schissi1}
\frac{1}{\Dt}\int_{\Dom^2} \frac{|\wdi-\wde|}{a_{\Dx}}\omega_{\epsx}\omega_{\epst}(t-s-\Dt) \mathbf{1}_{[\tau,\tau+\Dt)}(t)\, \duz\\
\leq \frac{1}{\Dt}\int_{\Dom^2} \frac{|\wdi-\wde|}{a_{\Dx}}\omega_{\epsx}\omega_{\epst}(t-s-\Dt) \mathbf{1}_{[\nu,\nu+\Dt)}(t)\, \duz+A+B+D
\end{multline}
We begin by estimating the term $A$. To do so, we note that
\begin{equation}\label{eq:omegas}
\partial_s\omega_{\epst}- D^+_s \omega_{\epst}=\frac{1}{\Dt}\int_{0}^{\Dt} (\xi-\Dt)\partial_{ss}\omega_{\epst}(t-s+\xi)\, d\xi
\end{equation}
and that
\begin{equation}\label{eq:null0}
\frac{1}{\Dt}\int_0^{\Dt}\int_{\Dom^2} \frac{|\wdi-\wde(t,y)|}{a_{\Dx}}\mathbf{1}_{[\nu,\tau)}(t)\omega_{\epsx}(\xi-\Dt)\partial_{ss}\omega_{\epst}(t-s+\xi)\, \duz\, d\xi=0,
\end{equation}
because 
$\frac{|\wdi-\wde(t,y)|}{a_{\Dx}}\mathbf{1}_{[\nu,\tau)}(t)\omega_{\epsx}(\xi-\Dt)$ is constant with respect to $s$ and $\omega_{\epst}(t+\xi-\cdot)$ is compactly supported in the domain. Therefore we can rewrite the term $A$ as
\begin{equation}\label{eq:Aa}
A=\frac{1}{\Dt}\int_0^{\Dt}\int_{\Dom^2} \frac{(|\wdi-\wde|-|\wdi-\wde(t,y)|)}{a_{\Dx}}\mathbf{1}_{[\nu,\tau)}(t)\omega_{\epsx}(\xi-\Dt)\partial_{ss}\omega_{\epst}(t-s+\xi)\, \duz\, d\xi,
\end{equation}
and bound in the following way, using triangle inequality,
\begin{align}\label{eq:A}
\begin{split}
|A|&\leq \frac{1}{\Dt}\int_0^{\Dt}\int_{\Dom^2} \frac{|\wde-\wde(t,y)|}{a_{\Dx}}\mathbf{1}_{[\nu,\tau)}(t)\omega_{\epsx}|\xi-\Dt| \big|\partial_{ss}\omega_{\epst}(t-s+\xi)\big|\, \duz\, d\xi,\\
& \leq  \frac{1}{\Dt\ua}\int_0^{\Dt} \int_{\nu}^{\tau}\int_{\Dom} |\wde-\wde(t,y)|\, dy\, |\xi-\Dt| \big|\partial_{ss}\omega_{\epst}(t-s+\xi)\big|\, dsdtd\xi\\
&\leq \frac{1}{\Dt\ua}\int_0^{\Dt} |\xi-\Dt|\int_{\nu}^{\tau}\sup_{s\in [t-\epst,t+2\epst]} \int_{\xdom} |\wde-\wde(t,y)|\, dy \int_{0}^T \big|\partial_{ss}\omega_{\epst}(t-s+\xi)\big|\, dsdtd\xi\\
&\leq \frac{C\Dt }{\ua\,  \epst^2} \int_{\nu}^{\tau}\sup_{s\in [t-\epst,t+2\epst]} \int_{\xdom} |\wde-\wde(t,y)|\, dy \, dt\\
&\leq \frac{C\Dt T}{\ua  \,\epst^{2-\gamma_\infty}}
\end{split}
\end{align}
where we have used Lemma \ref{lem:hoeldercont} for the last inequality. We proceed to estimating the term $B$:
\begin{equation*}
B:=\int_{\Dom^2} \frac{|\wdi-\wde|}{a_{\Dx}\ad}\mathbf{1}_{[\nu,\tau)}(t)\omega_{\epsx} \partial_s \omega_{\epst}\left(a_{\Dx}-\ad\right)\, \duz
\end{equation*}
Similarly to the case of term $A$, we use that
\begin{equation*}
\int_{\Dom^2} \frac{|\wdi-\wde(t,y)|}{a_{\Dx}\ad}\mathbf{1}_{[\nu,\tau)}(t)\omega_{\epsx} \partial_s \omega_{\epst}\left(a_{\Dx}-\ad\right)\, \duz=0.
\end{equation*}
Thus subtracting this from $B$, we can bound term $B$ using triangle inequality,
\begin{align}\label{eq:B}
\begin{split}
|B|&\leq \int_{\Dom^2} \frac{|\wde-\wde(t,y)|}{a_{\Dx}\ad}\mathbf{1}_{[\nu,\tau)}(t)\omega_{\epsx} |\partial_s \omega_{\epst}|\big|a_{\Dx}-\ad\big|\, \duz\\
&\leq \frac{1}{\ua^2} \int_{\Dom^2} |\wde-\wde(t,y)|\mathbf{1}_{[\nu,\tau)}(t)\omega_{\epsx} |\partial_s \omega_{\epst}|\left(\big|a_{\Dx}-\ad(x)\big|+\big|\ad(x)-\ad\big|\right)\, \duz\\
&\leq \frac{\|a_{\Dx}-\ad\|_{\infty}+ \sup_{|x-y|\leq 2\epsx}|\ad(x)-\ad(y)|}{\ua^2}\int_{\Dom^2} |\wde-\wde(t,y)|\mathbf{1}_{[\nu,\tau)}(t)\omega_{\epsx} |\partial_s \omega_{\epst}|\, \duz\\
&\leq C\frac{\Dx^{\alpha}+\delta^{\alpha}+\epsx^{\alpha}}{\ua^2}\int_{\nu}^{\tau}\sup_{s\in [t-\epst,t+\epst]}\int_{\xdom}|\wde(s,y)-\wde(t,y)|\, dy \int_0^T |\partial_s \omega_{\epst}| ds dt\\
&\leq C \frac{\Dx^{\alpha}+\delta^{\alpha}+\epsx^{\alpha}}{\ua^2} \epst^{\gamma_\infty-1}\\
&\leq C \frac{\delta^\alpha+\epsx^\alpha}{\epst^{1-\gamma_\infty}},
\end{split}
\end{align}
since we assumed $\Dx<\epsx$. We continue to bound term $D$. First, we observe that
\begin{equation}\label{eq:weissnoed}
\partial_y\omega_{\epsx}+ D^+_x\omega_{\epsx}=\partial_y\omega_{\epsx}- D^-_y\omega_{\epsx}=\frac{1}{\Dx}\int_0^{\Dx}(\Dx-\xi) \partial_{yy}\omega_{\epsx}(x-y+\xi)\, d\xi
\end{equation}
and that
\begin{equation*}
\frac{1}{\Dx}\int_0^{\Dx}(\Dx-\xi)\int_{\Dom^2} |\wdi-\wde(s,x)|\mathbf{1}_{[\nu,\tau)}(t)\omega_{\epst}\partial_{yy}\omega_{\epsx}(x-y+\xi)\, \duz\, d\xi=0,
\end{equation*}
since $\omega_{\epsx}$ is compactly supported. Thus, we can estimate $D$ by
\begin{align}\label{eq:D}
\begin{split}
|D|&\leq \frac{1}{\Dx}\int_0^{\Dx}|\Dx-\xi|\int_{\Dom^2} |\wde-\wde(s,x)|\mathbf{1}_{[\nu,\tau)}(t)\omega_{\epst}\big|\partial_{yy}\omega_{\epsx}(x-y+\xi)\big|\, \duz\, d\xi\\
&\leq \frac{1}{\Dx}\int_0^{\Dx}|\Dx-\xi|\int_{\nu}^{\tau}\int_0^T\sup_{|h|\leq 2\epsx}\int_{\xdom} |\wde(s,y)-\wde(s,y+h)|\omega_{\epst}\int_{\xdom}\big|\partial_{yy}\omega_{\epsx}\big|\,dy \, dxdsdt\, d\xi\\
&\leq C\frac{\Dx}{\epsx^2}\int_{\nu}^{\tau}\int_0^T\sup_{|h|\leq 2\epsx}\int_{\xdom} |\wde(s,y)-\wde(s,y+h)|\, dx\omega_{\epst}\, dsdt\\
&\leq C\frac{\Dx\, T}{\epsx^{2-\gamma_\infty}}.
\end{split}
\end{align}
It remains to relate the terms forming $E$ to the $L^1$-norm of the differences $\wde(t,\cdot)-\wdi(t,\cdot)$. We have
\begin{align*}
E_2&:=\frac{1}{\Dt}\int_{\Dom^2} \frac{|\wdi-\wde|}{a_{\Dx}}\omega_{\epsx}\omega_{\epst}(t-s-\Dt) \mathbf{1}_{[\tau,\tau+\Dt)}(t)\, \duz\\
&=\frac{1}{\Dt}\int_{\tau}^{\tau+\Dt}\int_0^T\int_{\xdom^2} \frac{|\wdi-\wde|}{a_{\Dx}}\omega_{\epsx}\, dx dy\,\omega_{\epst}(t-s-\Dt) \, dsdt\\
\end{align*}
and can rewrite
\begin{equation*}
\|(\wdi-\wde)/a_{\Dx}\|_{L^1(\xdom)}(\tau)=\frac{1}{\Dt}\int_{\Dom^2}\frac{|\wdi(\tau,x)-\wde(\tau,x)|}{a_{\Dx}}\omega_{\epsx}\omega_{\epst}(t-s-\Dt) \mathbf{1}_{[\tau,\tau+\Dt)}(t)\, \duz,
\end{equation*}
so that
\begin{align}\label{eq:E_2}
\begin{split}
&\Big| E_2-\|(\wdi-\wde)/a_{\Dx}\|_{L^1(\xdom)}(\tau)\Big|\\
&\quad =\bigg|\frac{1}{\Dt}\int_{\Dom^2}\omega_{\epsx}\mathbf{1}_{[\tau,\tau+\Dt)}(t)\frac{1}{a_{\Dx}} \left(|\wdi-\wde|-|\wdi(\tau,x)-\wde(\tau,x)|\right)\omega_{\epst}(t-s-\Dt) \, \duz\bigg|\\
&\quad\leq \frac{1}{\Dt\ua}\int_{\Dom^2}\omega_{\epsx}\mathbf{1}_{[\tau,\tau+\Dt)}(t)\omega_{\epst}(t-s-\Dt)\left(|\wdi-\wdi(\tau,\cdot)|+|\wde-\wde(\tau,x)|\right)\, \duz\\
&\quad\leq \frac{1}{\Dt\ua}\int_{[0,T]^2}\mathbf{1}_{[\tau,\tau+\Dt)}(t)\omega_{\epst}(t-s-\Dt)\int_{\xdom}|\wdi-\wdi(\tau,\cdot)|\, dx\, \duz\\
&\qquad + \frac{1}{\Dt\ua}\int_{\Dom^2}\omega_{\epsx}\mathbf{1}_{[\tau,\tau+\Dt)}(t)\omega_{\epst}(t-s-\Dt)\left(|\wde-\wde(\tau,y)|+|\wde(\tau,y)-\wde(\tau,x)|\right)\, \duz\\
&\quad\leq \frac{1}{\Dt\ua}\int_{\tau}^{\tau+\Dt}\sup_{|h|\leq 2\epst}\int_{\xdom}|\wdi-\wdi(t+h,\cdot)|\, dx\,dt\\
&\qquad + \frac{1}{\Dt\ua}\int_{[0,T]^2}\mathbf{1}_{[\tau,\tau+\Dt)}(t)\omega_{\epst}(t-s-\Dt)\int_{\xdom}|\wde-\wde(\tau,\cdot)|\, dx \,dsdt\\
&\qquad + \frac{1}{\Dt\ua}\int_{\xdom^2}\omega_{\epsx}|\wde(\tau,y)-\wde(\tau,x)|\, dxdy\\
&\quad \leq C \epst^{\gamma_\infty}+C \epsx^{\gamma_\infty}
\end{split}
\end{align}
In a similar way, defining
\begin{align*}
E_1&:=\frac{1}{\Dt}\int_{\Dom^2} \frac{|\wdi-\wde|}{a_{\Dx}}\omega_{\epsx}\omega_{\epst}(t-s-\Dt) \mathbf{1}_{[\nu,\nu+\Dt)}(t)\, \duz\\
&=\frac{1}{\Dt}\int_{\nu}^{\nu+\Dt}\int_0^T\int_{\xdom^2} \frac{|\wdi-\wde|}{a_{\Dx}}\omega_{\epsx}\, dx dy\,\omega_{\epst}(t-s-\Dt) \, dsdt
\end{align*}
we obtain
\begin{equation}\label{eq:E_1}
\bigg| E_1-\|(\wde_0-\wdi(0,\cdot))/a_{\Dx}\|_{L^1}\bigg|\leq C (\epsx^{\gamma_\infty}+\epst^{\gamma_\infty}+\nu^{\gamma_\infty})
\end{equation}
Thus, combining the estimates \eqref{eq:figdi2}, \eqref{eq:A}, \eqref{eq:B}, \eqref{eq:D}, \eqref{eq:E_2} and \eqref{eq:E_1}, 
we have
\begin{multline*}
\|\wdi-\wde\|_{L^1}(\tau) \\ \leq C \|\wde_0-\wdi(0,\cdot)\|_{L^1} +C\left(\epsx^{\gamma_\infty}+\epst^{\gamma_\infty}+\nu^{\gamma_\infty} +\Dt\epst^{\gamma_\infty-2}+ (\delta^\alpha+\epsx^\alpha)\epst^{\gamma_\infty-1}+\Dx\epsx^{\gamma_\infty-2}\right)
\end{multline*}
We let $\delta\rightarrow 0$,
\begin{equation}\label{eq:L1pre}
\|\wdi-w\|_{L^1}(\tau)\leq C \left(\|w_0-\wdi(0,\cdot)\|_{L^1} + \epsx^{\gamma_\infty}+\epst^{\gamma_\infty}+\nu^{\gamma_\infty} +\Dt\epst^{\gamma_\infty-2}+ \epsx^\alpha\epst^{\gamma_\infty-1}+\Dx\epsx^{\gamma_\infty-2}\right)
\end{equation}
We choose in this last expression $\nu=3 \epst$, $\epsx=\epst^{1/\alpha}$ and $\epsx=\Dx^{1/(\alpha\gamma_\infty+2-\gamma_\infty)}$ to obtain the rate.
\end{proof}
\begin{remark}
	Note that the above lemma implies a rate of convergence in $L^1(\xdom)$ of at least $\min\{\alpha, \nicefrac{(\gamma_\infty\alpha)}{(\gamma_\infty\alpha+2-\gamma_\infty)}\}$ for the variable $u_{\Dx}=w_{\Dx}/a_{\Dx}$.
\end{remark}

\subsection{A convergence rate in $L^2$}\label{sec:l2acvection}
The main ideas for proving a rate of convergence in $L^2$ are similar to those in Section \ref{ssec:l1rate}, an additional tool involved is a type of Gr\"{o}nwall inequality and the whole procedure is a bit more technical.
We start by noting that the (smooth) solution to \eqref{eq:forwdelta} satisfies
\begin{equation}\label{eq:kruz3}
\int_{\Dom} \frac{|\wde(s,y)-k|^2}{\ad(y)}\Omega_s+|\wde(s,y)-k|^2\Omega_y \,dyds=0,
\end{equation}
whereas the approximations $\wdi$ satisfy, by the discrete entropy inequality, \eqref{eq:entropydiscl2}
\begin{equation}\label{eq:kruz4}
\int_{\Dom} \frac{|\wdi(t,x)-\ell|^2}{a_{\Dx}(x)}D^-_t\Omega+|\wdi(t,x)-\ell|^2 D^+_x\Omega \,dxdt\geq 0.
\end{equation}
Then we can prove
\begin{theorem}\label{lem:ratetransportl2}
  Let $a\in C^{0,\alpha}(\R)$ satisfy
  \eqref{eq:ascona}. Denote $w$ the solution of \eqref{eq:forw} obtained as the limit as $\delta\rightarrow 0$ in \eqref{eq:forwdelta} and
  $w_\Dx$ the numerical approximation computed by scheme
  \eqref{eq:upwindscheme} and defined in
  \eqref{eq:defapproxfcn}. Assume that the initial data $w_0\in
  L^1(\xdom)$ is H\"{o}lder continuous with exponent $\gamma_\infty>0$. Then $w_\Dx(t,\cdot)$ converges to the solution $w(t,\cdot)$,
  $0<t<T$, at (at least) the rate
  \begin{equation}\label{eq:rae1dl2}
    \|(w-w_{\Dx})(\tau,\cdot)\|_{L^2(\xdom)}\le C\, \Dx^{(\gamma_\infty\alpha)/(\gamma_\infty\alpha+2-\gamma_\infty)}+C\|w_0-w_\Dx(0,\cdot)\|_{L^2(\xdom)},
  \end{equation}
  where $C$ is a constant depending on $\ua$,
  $\oa$, $\|a\|_{C^{0,\alpha}}$ and $T$ but not on $\Dx$.
\end{theorem}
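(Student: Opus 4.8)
The plan is to mirror the doubling-of-variables argument from Theorem~\ref{lem:ratetransport}, but now working with the \emph{quadratic} Kru\v{z}kov entropies. Starting from the smooth-solution identity \eqref{eq:kruz3} and the discrete entropy inequality \eqref{eq:kruz4}, I would insert $\wdi(t,x)$ for $k$ and $\wde(s,y)$ for $\ell$, integrate both over $\Dom$, and add them. This produces the exact analogue of \eqref{eq:figdi}, now with $|\wdi-\wde|^2$ in place of $|\wdi-\wde|$. Decomposing $D^-_t\Omega$ exactly as in \eqref{eq:Domega} yields the four terms $A,B,D,E$, and the crucial observation \eqref{eq:Domega2} about $D^-_t\mathbf{1}_{[\nu,\tau)}$ lets me rewrite the whole inequality as an estimate for the boundary term $E_2$ at time $\tau$ in terms of $E_1$ at time $\nu$ plus the error contributions $A,B,D$.

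The estimates for $A$, $B$ and $D$ go through essentially unchanged, since they rely only on the moduli of continuity of $\wde$ (Lemma~\ref{lem:hoeldercont} and Remark~\ref{rem:moclp}), which hold in $L^p$ for every $p\in[1,\infty)$ with the same exponent $\gamma_\infty$; the only modification is that I now estimate $L^2$-moduli of continuity of $\wde$ rather than $L^1$ ones, and I use $\avestar{\cdot}$-type Cauchy--Schwarz splittings where a single triangle inequality sufficed before. Likewise, the boundary terms $E_1$ and $E_2$ can be related to $\|\wdi-\wde\|^2_{L^2}(\nu)$ and $\|\wdi-\wde\|^2_{L^2}(\tau)$ as in \eqref{eq:E_2}--\eqref{eq:E_1}. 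The key structural difference from the $L^1$ case is that the quadratic entropy does not give a clean contraction in the cross term $B$: after subtracting the $s$-independent version one is left with a factor $|\wde-\wde(t,y)|^2$ multiplied by $|a_{\Dx}-\ad|$, and more importantly the comparison between $E_1$ and the initial difference now involves $\|\wde_0-\wdi(0,\cdot)\|^2_{L^2}$ together with a term proportional to $\|\wdi-\wde\|^2_{L^2}(\tau)$ itself, i.e.\ the error at the \emph{final} time reappears on the right-hand side with a coefficient of order $\nu^{\gamma_\infty}$ (or similar).

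This self-referential structure is exactly why the remark preceding this section flags the need for a Gr\"onwall-type argument. So the plan is: after collecting all bounds, I expect an inequality of the schematic form
\begin{equation*}
\|\wdi-\wde\|^2_{L^2}(\tau)\le \|\wde_0-\wdi(0,\cdot)\|^2_{L^2}+ C\,g(\tau)+ C\int_\nu^\tau \|\wdi-\wde\|^2_{L^2}(t)\,dt,
\end{equation*}
where $g(\tau)$ collects the $\epsx^{\gamma_\infty}$, $\epst^{\gamma_\infty}$, $\Dt\epst^{\gamma_\infty-2}$, $\epsx^\alpha\epst^{\gamma_\infty-1}$ and $\Dx\epsx^{\gamma_\infty-2}$ contributions. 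Applying the integral Gr\"onwall inequality absorbs the $\tau$-dependence, after which I send $\delta\to0$ (using that $\wde\to w$ in $L^2(\Dom)$ with the same moduli of continuity, together with the $L^2$ entropy identity \eqref{eq:lpcons}) and finally optimize by choosing $\nu=3\epst$, $\epsx=\epst^{1/\alpha}$ and $\epsx=\Dx^{1/(\alpha\gamma_\infty+2-\gamma_\infty)}$, exactly as at the end of Theorem~\ref{lem:ratetransport}, to recover the stated rate $\Dx^{(\gamma_\infty\alpha)/(\gamma_\infty\alpha+2-\gamma_\infty)}$.

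The main obstacle, and the reason this is ``a bit more technical'' than the $L^1$ case, is controlling the cross term arising from the product structure of $|\wdi-\wde|^2$: unlike the $L^1$ entropy, squaring prevents a direct telescoping/contraction, so isolating a clean quantity that closes the Gr\"onwall loop — rather than an expression that degrades at each step — requires careful Cauchy--Schwarz bookkeeping and verifying that the constant in front of the integral term stays independent of $\Dx$, $\delta$ and the regularization parameters.
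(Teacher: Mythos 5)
Your overall frame (doubling of variables with the quadratic entropies \eqref{eq:kruz3}--\eqref{eq:kruz4}, a Gr\"onwall step, then the same parameter choices as in the $L^1$ proof) is indeed the paper's, but the schematic inequality you write down is not the one the argument actually produces, and the discrepancy is not cosmetic: it costs a factor $2$ in the exponent. Expanding $|\wdi-\wde|^2-|\wdi-\wde(t,y)|^2$ in the terms $A$, $B$, $D$ gives a product of the form (difference)$\times$(sum), so after Cauchy--Schwarz each of these terms is bounded by a small modulus-of-continuity factor times the \emph{first} power of the unknown error, namely times $\int_\nu^\tau\sqrt{\kappa(t)}\,dt$, where $\kappa(t)$ is the doubled $L^2$-distance; in particular your description of $B$ (a factor $|\wde-\wde(t,y)|^2$ times $|a_{\Dx}-\ad|$) is incorrect --- the paper gets $|\wde-\wde(t,y)|\,|2\wdi-\wde-\wde(t,\cdot)|\,|a_{\Dx}-\ad|$ --- and $A,B,D$ are exactly where the self-referential structure enters, not the comparison of $E_1$ with the initial error (those comparisons give clean $\epst^{\gamma_\infty}+\epsx^{\gamma_\infty}+\nu^{\gamma_\infty}$ corrections). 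What one actually obtains, at discrete time levels $\nu=N_1\Dt$, $\tau=N_2\Dt$, is
\begin{equation*}
\kappa(\tau)\le \kappa(\nu)+M_1+M_2\int_\nu^\tau\sqrt{\kappa(t)}\,dt,
\end{equation*}
with two distinct groups of source terms: $M_1$, whose entries carry the doubled exponents $2\gamma_\infty$ (e.g.\ $\epsx^\alpha\epst^{2\gamma_\infty-1}$, $\Dx\epsx^{2\gamma_\infty-2}$), and $M_2$, whose entries are your first-power terms (e.g.\ $\epsx^\alpha\epst^{\gamma_\infty-1}$, $\Dx\epsx^{\gamma_\infty-2}$). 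Because the left side is the error \emph{squared} while the integrand carries the error to the first power, the standard linear Gr\"onwall inequality does not apply; the paper proves a dedicated discrete lemma (Lemma \ref{lem:discretegronwall}) for inequalities $X_k^2\le X_0^2+C_1+C_2\sum_i X_i$, which yields $\sqrt{\kappa(\tau)}\le\sqrt{\kappa(\nu)}+\sqrt{M_1}+TM_2$ with no exponential factor, and the rate follows by balancing $\sqrt{M_1}$ against $M_2$ under the stated parameter choices.

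Your plan as written --- squared error under the time integral, and $g$ collecting the first-power terms $\epsx^{\gamma_\infty},\epst^{\gamma_\infty},\Dt\epst^{\gamma_\infty-2},\epsx^\alpha\epst^{\gamma_\infty-1},\Dx\epsx^{\gamma_\infty-2}$ imported from \eqref{eq:L1pre} --- would, after linear Gr\"onwall and a square root, give only $\|w-\wdi\|_{L^2}(\tau)\le C\sqrt{g}\sim C\,\Dx^{(\gamma_\infty\alpha)/(2(\gamma_\infty\alpha+2-\gamma_\infty))}$, i.e.\ half the claimed exponent. One can force the correct inequality into linear-Gr\"onwall form by Young's inequality, $M_2\sqrt{\kappa}\le\tfrac12 M_2^2+\tfrac12\kappa$, but then the admissible source is $M_1+TM_2^2$ (the \emph{squares} of your $g$-terms plus the genuinely quadratic $M_1$-terms), and only with that source does the final square root return the full rate \eqref{eq:rae1dl2}. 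So the missing ingredient is precisely the identification of the square-versus-first-power structure of the error inequality and the corresponding nonstandard Gr\"onwall lemma; without it the proposal does not prove the stated rate.
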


\begin{proof}
As in the $L^1$-case, we insert $\wdi(t,x)$ for $k$ and $\wde(s,y)$ for $\ell$ in \eqref{eq:kruz3} and \eqref{eq:kruz4}, and integrate the respective equations over $(t,x)\in\Dom$ and $(s,y)\in \Dom$ respectively. Then adding up, we have (for convenience, we will again omit writing the arguments of $\wdi=\wdi(t,x)$, $\wde=\wdi(s,y)$, $\ad=\ad(y)$ and $a_{\Dx}=a_{\Dx}(x)$ in the following)
\begin{equation}\label{eq:figdi3}
\int_{\Dom^2} {|\wdi-\wde|^2}\left(\frac{D^-_t \Omega}{a_{\Dx}}+\frac{\Omega_s}{\ad}\right)+|\wdi-\wde|^2\left(D^+_x\Omega +\Omega_y\right)\,\duz\geq 0,
\end{equation}
where $\duz:=dx dy dt ds$. By \eqref{eq:Domega}, we can rewrite equation \eqref{eq:figdi3} as
\begin{align}\label{eq:figdi4}
\begin{split}
&A+B+D+E\\
&\quad :=\int_{\Dom^2} \frac{|\wdi-\wde|^2}{a_{\Dx}}\mathbf{1}_{[\nu,\tau)}(t)\omega_{\epsx}\left(\partial_s\omega_{\epst}-D^+_s\omega_{\epst}\right)\, \duz\\
 &\quad\quad+ \int_{\Dom^2} \frac{|\wdi-\wde|^2}{a_{\Dx}\ad}\mathbf{1}_{[\nu,\tau)}(t)\omega_{\epsx} \partial_s \omega_{\epst}\left(a_{\Dx}-\ad\right)\, \duz\\
&\quad\quad\quad+\int_{\Dom^2} |\wdi-\wde|^2\mathbf{1}_{[\nu,\tau)}(t)\omega_{\epst}\left(\partial_y\omega_{\epsx}+ D^+_x\omega_{\epsx}\right)\, \duz \\
&\quad\quad\quad\quad+\int_{\Dom^2} \frac{|\wdi-\wde|^2}{a_{\Dx}}\omega_{\epsx}\omega_{\epst}(t-s-\Dt) D^-_t\mathbf{1}_{[\nu,\tau)}(t)\, \duz\\
&\quad\quad\quad\quad\quad\geq 0,
\end{split}
\end{align}
and by \eqref{eq:Domega2}, this is equivalent to
\begin{multline}\label{eq:schissix}
\frac{1}{\Dt}\int_{\Dom^2} \frac{|\wdi-\wde|^2}{a_{\Dx}}\omega_{\epsx}\omega_{\epst}(t-s-\Dt) \mathbf{1}_{[\tau,\tau+\Dt)}(t)\, \duz\\
\leq \frac{1}{\Dt}\int_{\Dom^2} \frac{|\wdi-\wde|^2}{a_{\Dx}}\omega_{\epsx}\omega_{\epst}(t-s-\Dt) \mathbf{1}_{[\nu,\nu+\Dt)}(t)\, \duz+A+B+D
\end{multline}
We start with the term $A$. To do so, we use again \eqref{eq:omegas} and that, similarly to \eqref{eq:null0}, it holds,
\begin{equation}\label{eq:null01}
\frac{1}{\Dt}\int_0^{\Dt}\int_{\Dom^2} \frac{|\wdi-\wde(t,y)|^2}{a_{\Dx}}\mathbf{1}_{[\nu,\tau)}(t)\omega_{\epsx}(\xi-\Dt)\partial_{ss}\omega_{\epst}(t-s+\xi)\, \duz\, d\xi=0,
\end{equation}
Hence we can rewrite the term $A$ as
\begin{equation}\label{eq:Aal2}
A=\frac{1}{\Dt}\int_0^{\Dt}\int_{\Dom^2} \frac{(|\wdi-\wde|^2-|\wdi-\wde(t,y)|^2)}{a_{\Dx}}\mathbf{1}_{[\nu,\tau)}(t)\omega_{\epsx}(\xi-\Dt)\partial_{ss}\omega_{\epst}(t-s+\xi)\, \duz\, d\xi,
\end{equation}
and bound in the following way, using triangle and Cauchy-Schwarz inequality,
\begin{align*}
\begin{split}
|A|&\leq \frac{1}{\Dt}\int_0^{\Dt}\int_{\Dom^2} \frac{|\wde-\wde(t,y)| |2\wdi-\wde-\wde(t,y)|}{a_{\Dx}}\mathbf{1}_{[\nu,\tau)}(t)\omega_{\epsx}|\xi-\Dt| \big|\partial_{ss}\omega_{\epst}\big|\, \duz\, d\xi,\\
&\leq \frac{1}{\Dt}\int_0^{\Dt}\!\!\int_{\nu}^\tau\!\!\int_0^T |\xi-\Dt| \left(\int_{\xdom^2} \frac{|\wde-\wde(t,y)|^2}{a_{\Dx}}\omega_{\epsx}\, dxdy\right)^{\frac{1}{2}}\\
&\hphantom{\leq \frac{1}{\Dt}\int_0^{\Dt}\!\!\int_{\nu}^\tau\!\!\int_0^T}\times \biggl[\left(\int_{\xdom^2} \frac{|\wdi-\wde(t,y)|^2}{a_{\Dx}}\omega_{\epsx}\, dxdy\right)^{\frac{1}{2}}+\left(\int_{\xdom^2} \frac{|\wdi-\wde|^2}{a_{\Dx}}\omega_{\epsx}\, dxdy\right)^{\frac{1}{2}}\biggr]\big|\partial_{ss}\omega_{\epst}\big|\,dsdtd\xi\\
&\leq C\frac{2 \Dt}{\epst^2}\int_{\nu}^\tau\!\! \sup_{s\in[t-2\epst,t+2\epst]}\left(\int_{\xdom^2} \frac{|\wde-\wde(t,y)|^2}{a_{\Dx}}\omega_{\epsx}\, dxdy\right)^{\frac{1}{2}}\!\! \sup_{s\in[t-2\epst,t+2\epst]}\left(\int_{\xdom^2} \frac{|\wdi-\wde|^2}{a_{\Dx}}\omega_{\epsx}\, dxdy\right)^{\frac{1}{2}}\!\!  dt\\
&\leq C\frac{\Dt }{\sqrt{\ua}\epst^{2-\gamma_\infty}}\int_{\nu}^\tau  \sup_{s\in[t-2\epst,t+2\epst]}\left(\int_{\xdom^2} \frac{|\wdi-\wde|^2}{a_{\Dx}}\omega_{\epsx}\, dxdy\right)^{\frac{1}{2}}  dt
\end{split}
\end{align*}
We denote
\begin{equation*}
\kappa(t)=\int_0^T\int_{\xdom^2}\frac{|\wde(s-\Dt,y)-\wdi(t,x)|^2}{a_{\Delta}(x)}\omega_{\epsx}\omega_{\epst}\, dxdyds
\end{equation*}
and observe that
\begin{align}\label{eq:kappa1}
\begin{split}
&\int_{\nu}^\tau\sup_{s\in[t-2\epst,t+2\epst]}\left(\int_{\xdom^2} \frac{|\wdi(t,x)-\wde(s,y)|^2}{a_{\Dx}(x)}\omega_{\epsx}\, dxdy\right)^{1/2}dt\\
&\quad \leq  \int_{\nu}^\tau\biggl\{\sup_{s\in[t-2\epst,t+2\epst]}\left(\int_{\xdom^2} \frac{|\wde(t,y)-\wde(s,y)|^2}{a_{\Dx}}\omega_{\epsx}\, dxdy\right)^{1/2}\\
&\quad \hphantom{\leq  \int_{\nu}^\tau\biggl\{}  +\left(\int_{\xdom^2} \frac{|\wdi-\wde(t,y)|^2}{a_{\Dx}}\omega_{\epsx}\, dxdy\right)^{1/2}\biggr\}dt\\
&\quad\leq C T\epst^{\gamma_\infty} +\int_{\nu}^\tau \left(\int_{\xdom^2} \frac{|\wdi(t,x)-\wde(t,y)|^2}{a_{\Dx}}\omega_{\epsx}\, dxdy\right)^{1/2}dt\\
&\quad\leq C T\epst^{\gamma_\infty} +\int_{\nu}^\tau \biggl\{ \left(\int_0^T\int_{\xdom^2} \frac{|\wdi(t,x)-\wde(s-\Dt,y)|^2}{a_{\Dx}}\omega_{\epsx}\omega_{\epst}\, dxdyds\right)^{1/2}\\
&\quad \hphantom{\leq C T\epst^{\gamma_\infty} +\int_{\nu}^\tau\biggl\{}+ \left(\int_0^T\int_{\xdom^2} \frac{|\wde(t,y)-\wde(s-\Dt,y)|^2}{a_{\Dx}}\omega_{\epsx}\omega_{\epst}\, dxdyds\right)^{1/2}\biggr\}dt\\
&\quad \leq C T\epst^{\gamma_\infty} +\int_{\nu}^\tau \sqrt{\kappa(t)}\,dt.
\end{split}
\end{align}
Therefore, the term $A$ can be bounded as
\begin{equation}\label{eq:Al2}
|A|\leq C\frac{\Dt }{\epst^{2-\gamma_\infty}}\biggl\{\epst^{\gamma_\infty} +\int_{\nu}^\tau \sqrt{\kappa(t)}\,dt \biggr\}.
\end{equation}
We proceed to estimating the term $B$:
\begin{equation*}
B:=\int_{\Dom^2} \frac{|\wdi-\wde|^2}{a_{\Dx}\ad}\mathbf{1}_{[\nu,\tau)}(t)\omega_{\epsx} \partial_s \omega_{\epst}\left(a_{\Dx}-\ad\right)\, \duz
\end{equation*}
We again use that
\begin{equation*}
\int_{\Dom^2} \frac{|\wdi-\wde(t,y)|^2}{a_{\Dx}\ad}\mathbf{1}_{[\nu,\tau)}(t)\omega_{\epsx} \partial_s \omega_{\epst}\left(a_{\Dx}-\ad\right)\, \duz=0,
\end{equation*}
which admits us to rewrite the term $B$ and estimate as follows:
\begin{align}\label{eq:Bl2}
\begin{split}
|B|&\leq \int_{\Dom^2} \frac{|\wde-\wde(t,y)| |2\wdi-\wde- \wde(t,\cdot)|}{a_{\Dx}\ad}\mathbf{1}_{[\nu,\tau)}(t)\omega_{\epsx} |\partial_s \omega_{\epst}|\big|a_{\Dx}-\ad\big|\, \duz\\
&\leq \frac{1}{\ua} \int_{\nu}^\tau\!\!\int_0^T\!\!\int_{\xdom^2}\frac{|\wde-\wde(t,y)| |2\wdi-\wde- \wde(t,\cdot)|}{a_{\Dx}}\omega_{\epsx} |\partial_s \omega_{\epst}|\left(\big|a_{\Dx}-\ad(x)\big|+\big|\ad(x)-\ad\big|\right)\, \duz\\
&\leq \frac{1}{\ua}\left(\|a_{\Dx}-\ad\|_{\infty}+ \sup_{|x-y|\leq 2\epsx}|\ad(x)-\ad(y)|\right)\\
&\qquad\qquad\qquad\times  \int_{\nu}^\tau\!\!\int_0^T\!\!\int_{\xdom^2} \frac{|\wde-\wde(t,y)| |2\wdi-\wde- \wde(t,\cdot)|}{a_{\Dx}} \omega_{\epsx} |\partial_s \omega_{\epst}|\, \duz\\
&\leq C\frac{\Dx^{\alpha}+\delta^{\alpha}+\epsx^{\alpha}}{\ua}\int_{\nu}^\tau\!\!\int_0^T\!\!\int_{\xdom^2} \frac{|\wde-\wde(t,y)| |2\wdi-\wde- \wde(t,\cdot)|}{a_{\Dx}} \omega_{\epsx} |\partial_s \omega_{\epst}|\, \duz\\
&\leq C\frac{\Dx^{\alpha}+\delta^{\alpha}+\epsx^{\alpha}}{\ua^{3/2}}\int_{\nu}^\tau\!\!\sup_{s\in[t-\epst,t+\epst]}\left(\int_{\xdom^2}|\wde-\wde(t,y)|^2\omega_{\epsx}\,dxdy\right)^{1/2}\\
&\hphantom{\leq C\frac{\Dx^{\alpha}+\delta^{\alpha}+\epsx^{\alpha}}{\ua^{3/2}}\int_{\nu}^\tau} \times\sup_{s\in[t-\epst,t+\epst]}\left( \int_{\xdom^2} \frac{|\wdi-\wde|^2}{a_{\Dx}} \omega_{\epsx}dxdy\right)^{1/2}\int_0^T\!|\partial_s \omega_{\epst}|\, dsdt\\
&\leq C \frac{\Dx^{\alpha}+\delta^{\alpha}+\epsx^{\alpha}}{\epst^{1-\gamma_\infty}}\int_{\nu}^{\tau}\sup_{s\in[t-\epst,t+\epst]}\left( \int_{\xdom^2} \frac{|\wdi-\wde|^2}{a_{\Dx}} \omega_{\epsx}dxdy\right)^{1/2} dt\\
&\leq C \frac{\Dx^{\alpha}+\delta^{\alpha}+\epsx^{\alpha}}{\epst^{1-\gamma_\infty}}\left( \epst^{\gamma_\infty} +\int_{\nu}^{\tau}\sqrt{\kappa(t)} dt\right)
\end{split}
\end{align}
where we have used \eqref{eq:kappa1} for the last inequality. We continue to estimate the term $D$. We have, using \eqref{eq:weissnoed}
\begin{align*}
D&=\int_{\Dom^2} |\wdi-\wde|^2\mathbf{1}_{[\nu,\tau)}(t)\omega_{\epst}\left(\partial_y\omega_{\epsx}+ D^+_x\omega_{\epsx}\right)\, \duz \\
&=\frac{1}{\Dx}\int_0^{\Dx}\!\!\int_{\Dom^2} |\wdi-\wde|^2\mathbf{1}_{[\nu,\tau)}(t)\omega_{\epst}(\Dx-\xi) \partial_{yy}\omega_{\epsx}(x-y+\xi)\, \duz\, d\xi \\
&=\frac{1}{\Dx}\int_0^{\Dx}\!\!\int_{\Dom^2}\left( |\wdi-\wde|^2- |\wdi-\wde(s,x)|^2\right)\mathbf{1}_{[\nu,\tau)}(t)\omega_{\epst}(\Dx-\xi) \partial_{yy}\omega_{\epsx}(x-y+\xi)\, \duz\, d\xi.
\end{align*}
Hence we can estimate the term $D$ by
\begin{align*}
|D|&\leq \frac{1}{\Dx}\int_0^{\Dx}\!\!\int_{\nu}^{\tau}\!\!\int_{\Dom}\int_{\xdom}|\wde- \wde(\cdot,x)| |2 \wdi-\wde-\wde(\cdot,x)| \omega_{\epst}|\Dx-\xi| \big|\partial_{yy}\omega_{\epsx}\big|\, \duz\, d\xi\\
&\leq 2\int_0^{\Dx}\!\!\int_{\nu}^{\tau}\!\!\sup_{|h|\leq 2\epsx}\left(\int_{\Dom}|\wde(\cdot,x+h)- \wde(\cdot,x)|^2\omega_{\epst}dxds\right)^{\frac{1}{2}}\\
&\hphantom{ 2\int_0^{\Dx}\!\!\int_{\nu}^{\tau}}\, \times\sup_{|h|\leq 2\epsx}\left(\int_{\Dom}|\wdi- \wde(\cdot,x+h)|^2\omega_{\epst}dxds\right)^{\frac{1}{2}}\int_{\xdom}\!\ \big|\partial_{yy}\omega_{\epsx}\big|\, dydt  d\xi\\
&\leq C\frac{\Dx}{\epsx^{2-\gamma_\infty}}\int_{\nu}^{\tau}\sup_{|h|\leq 2\epsx}\left(\int_{\Dom}|\wdi- \wde(\cdot,x+h)|^2\omega_{\epst}dxds\right)^{\frac{1}{2}}dt
\end{align*}
We have
\begin{align}\label{eq:kappa2}
\begin{split}
&\int_{\nu}^\tau\sup_{|h|\leq 2\epsx}\left(\int_{\Dom}|\wdi(t,x)-\wde(s,x+h)|^2\omega_{\epst}\, dxds\right)^{1/2}dt\\
&\quad \leq \int_{\nu}^\tau\biggl\{\sup_{|h|\leq 2\epsx}\left(\int_{\Dom}|\wde(s,x)-\wde(s,x+h)|^2\omega_{\epst}\, dxds\right)^{1/2}\\
&\quad \hphantom{\leq  \int_{\nu}^\tau\biggl\{}+\sup_{|h|\leq 2\epsx}\left(\int_{\Dom}|\wdi(t,x)-\wde(s,x)|^2\omega_{\epst}\, dxds\right)^{1/2}\biggr\}dt\\
&\quad\leq C\epsx^{\gamma_\infty} +\int_{\nu}^\tau \left(\int_{\Dom}|\wdi(t,x)-\wde(s,x)|^2\omega_{\epst}\omega_{\epsx}\, dxdyds\right)^{1/2}dt\\
&\quad\leq C\epsx^{\gamma_\infty} +\int_{\nu}^\tau\biggl\{ \left(\int_{\xdom}\int_{\Dom}|\wde(s,x)-\wde(s-\Dt,y)|^2\omega_{\epst}\omega_{\epsx}\, dxdyds\right)^{1/2}\\
&\quad \hphantom{\leq C\epsx^{\gamma_\infty} + \int_{\nu}^\tau\biggl\{} +\left(\int_{\xdom}\int_{\Dom}|\wdi(t,x)-\wde(s-\Dt,y)|^2\omega_{\epst}\omega_{\epsx}\, dxdyds\right)^{1/2}\biggr\}dt\\
&\quad\leq C (\epsx^{\gamma_\infty}+\epst^{\gamma_\infty}) + C \oa\int_{\nu}^\tau \sqrt{\kappa(t)}\, dt,
\end{split}
\end{align}
and consequently,
\begin{equation}\label{eq:Dl2}
|D|\leq C \frac{\Dx}{\epsx^{2-\gamma_\infty}}\left(\epsx^{\gamma_\infty}+\epst^{\gamma_\infty} +\int_{\nu}^\tau \sqrt{\kappa(t)}\, dt\right).
\end{equation}
Summing up, equation \eqref{eq:schissix} becomes
\begin{equation}\label{eq:schissixy}
\frac{1}{\Dt}\int_{\tau}^{\tau+\Dt} \kappa(t)\, dt
\leq \frac{1}{\Dt}\int_{\nu}^{\nu+\Dt} \kappa(t)\,dt+M_1+M_2 \int_{\nu}^{\tau} \sqrt{\kappa(t)}\, dt
\end{equation}
where
\begin{equation*}
M_1=C\left(\frac{\Dt }{\epst^{2-2\gamma_\infty}}+\frac{\delta^{\alpha}+\epsx^{\alpha}}{\epst^{1-2\gamma_\infty}} + \frac{\Dx}{\epsx^{2-2\gamma_\infty}}+ \frac{\Dx\,\epst^{\gamma_\infty}}{\epsx^{2-\gamma_\infty}} \right), \quad M_2= C\left(\frac{\Dt }{\epst^{2-\gamma_\infty}}+ \frac{\delta^{\alpha}+\epsx^{\alpha}}{\epst^{1-\gamma_\infty}} +\frac{\Dx}{\epsx^{2-\gamma_\infty}} \right)
\end{equation*}
We choose $\nu$ and $\tau$ such that $\tau/\Dt,\nu/\Dt\in \N$, i.e. $\nu=N_1\Dt$ and $\tau=N_2\Dt$ for some $N_1,N_2\in\N$ and notice that
\begin{equation*}
\int_{k\Dt}^{(k+1)\Dt} \sqrt{\kappa(t)} \,dt \leq \Dt\, \sqrt{\frac{1}{\Dt}\int_{k\Dt}^{(k+1)\Dt}\!\!\! \kappa(t)\, dt}:=\Dt\, X_k.
\end{equation*}
Hence we can rewrite equation \eqref{eq:schissixy} as 
\begin{equation*}
X_{N_2}^2\leq X_{N_1}^2 + M_1+\Dt M_2 \sum_{i={N_1}}^{N_2-1} X_i.
\end{equation*}
Now we use the following simple adaption of \cite[[Theorem 5, page 4]{gronwall1}:
\begin{lemma}\label{lem:discretegronwall}
	Let $X_0\in\R_{\geq 0}$,  $X_k\geq 0$, $k=1,\dots, N$ for some $N\in \mathbb{N}$ satisfy
	\begin{equation}\label{eq:lemmacond}
	X_k^2\leq X_0^2 + C_1 + C_2 \sum_{i=0} ^k X_i,
	\end{equation}  
	for all $k\in\{1,\dots, N\}$, for some $C_1, C_2\geq 0$. Then
	\begin{equation*}
	X_k\leq X_0+ \sqrt{C_1} + C_2 k.
	\end{equation*}
\end{lemma}
Using this lemma with $C_1=M_1$ and $C_2 = \Dt M_2$, we obtain the estimate
\begin{equation}\label{eq:jumalauta}
X_{N_2}\leq X_{N_1} + \sqrt{M_1} + \Dt M_2 (N_2-N_1)= X_{N_1}+ \sqrt{M_1} + T M_2
\end{equation}

Next, we relate the $L^2$-norm of the difference $(\wde-\wdi)(\tau)$ to $X_{N_2}$. Indeed,
\begin{align*}
&\Big| \|(\wde-\wdi)/\sqrt{a_{\Dx}}\|_{L^2}(\tau)- X_{N_2}\Big|\\
&\quad \leq \left(\frac{1}{\Dt}\int_{\tau}^{\tau+\Dt}\int_{\Dom}\!\int_{\xdom}\frac{|\wde(\tau,x)-\wde(s-\Dt,y)|^2}{a_{\Dx}}\omega_{\epsx}\omega_{\epst}\duz\right)^{1/2}\\
&\quad \leq C (\epst^{\gamma_\infty}+\epsx^{\gamma_\infty})
\end{align*}
In a similar way, we can show
\begin{equation*}
\Big| \|(\wde_0-\wdi(0,\cdot))/\sqrt{a_{\Dx}}\|_{L^2}- X_{N_1}\Big|\leq C(\epst^{\gamma_\infty}+\epsx^{\gamma_\infty}+\nu^{\gamma_\infty}),
\end{equation*}
and therefore, with \eqref{eq:jumalauta},
\begin{equation*}
\|\wde-\wdi\|_{L^2}(\tau)\leq C \left( \|\wde_0-\wdi(0,\cdot)\|_{L^2}+ \epsx^{\gamma_\infty}+\epst^{\gamma_\infty} +\nu^{\gamma_\infty}+\sqrt{M_1} + T M_2\right)
\end{equation*}
Letting $\delta\rightarrow 0$ and inserting the definitions of $M_1$ and $M_2$, this is

\begin{multline*}
\|w-\wdi\|_{L^2}(\tau)\leq C \biggl( \|w_0-\wdi(0,\cdot)\|_{L^2}+ \epsx^{\gamma_\infty}+\epst^{\gamma_\infty} +\nu^{\gamma_\infty}+ \frac{\Dt^{1/2} }{\epst^{1-\gamma_\infty}}\\
+\frac{\epsx^{\alpha/2}}{\epst^{1/2-\gamma_\infty}} + \frac{\Dx^{1/2}}{\epsx^{1-\gamma_\infty}}+ \frac{\Dx^{1/2}\,\epst^{\gamma_\infty/2}}{\epsx^{1-\gamma_\infty/2}}
+\frac{\Dt }{\epst^{2-\gamma_\infty}}+ \frac{\epsx^{\alpha}}{\epst^{1-\gamma_\infty}} +\frac{\Dx}{\epsx^{2-\gamma_\infty}}\biggr)
\end{multline*}
Now we can choose $\nu=3\epst$, $\epst=\epsx^{\alpha}$ and $\epsx= \Dx^{1/(\alpha\gamma_\infty+2-\gamma_\infty)}$ to balance the errors and finally obtain \eqref{eq:rae1dl2}.

\end{proof}
\begin{proof}[Proof f Lemma \ref{lem:discretegronwall}]
Define $Y_k:= X_0^2 + C_1 + C_2 \sum_{i=0}^k X_i$. Then by \eqref{eq:lemmacond}, $X_k^2\leq Y_k$. Moreover, subtracting the expression for $Y_{k-1}$ from the expression for $Y_k$, we have
\begin{equation*}
Y_k-Y_{k-1}= C_2 X_k\leq C_2 \sqrt{Y_k}\leq C_2 \left(\sqrt{Y_k} + \sqrt{Y_{k-1}}\right)
\end{equation*}	
Since $Y_k-Y_{k-1}=(\sqrt{Y_k}-\sqrt{Y_{k-1}})(\sqrt{Y_k}+\sqrt{Y_{k-1}})$, we can divide both sides of the above equation by $(\sqrt{Y_k}+\sqrt{Y_{k-1}}$ to obtain
\begin{equation*}
\sqrt{Y_k}-\sqrt{Y_{k-1}}\leq C_2.
\end{equation*}	
Using induction over $k$, we obtain
\begin{equation*}
\sqrt{Y_k}\leq \sqrt{Y_0}+ C_2 k.
\end{equation*}
Hence
\begin{equation*}
X_k\leq \sqrt{Y_k}\leq \sqrt{Y_0}+ C_2 k= \sqrt{X_0+ C_1} + C_2 k,
\end{equation*}
by the definition of $Y_k$. Using that $\sqrt{a^2+b^2}\leq |a|+ |b|$, this proves the claim.
\end{proof}

\subsection{Experimental rates for the advection equation}\label{ssec:exprates}
In this section, we run a few numerical experiments to compare the theoretically established rates to experimentally observed ones. As a model coefficient $a$, we choose a sample (single realization) of a log-normally distributed random field, which was
 generated using a spectral FFT method
 \cite{cd1997,pico1993,rnh2000,Muell455} from a given covariance
 operator $\hat{c}$ which we assume to be log-normal, so that the
 covariance operator completely determines the law of $\hat{c}$. It is
 easy to check that this coefficient $a$ is uniformly positive,
 bounded from above and H\"{o}lder continuous with exponent $1/2$. See
 Figure \ref{fig:coef1d} for an illustration of the coefficient.
 \begin{figure}[ht] 
 	\centering
 	\includegraphics[width=0.8\textwidth]{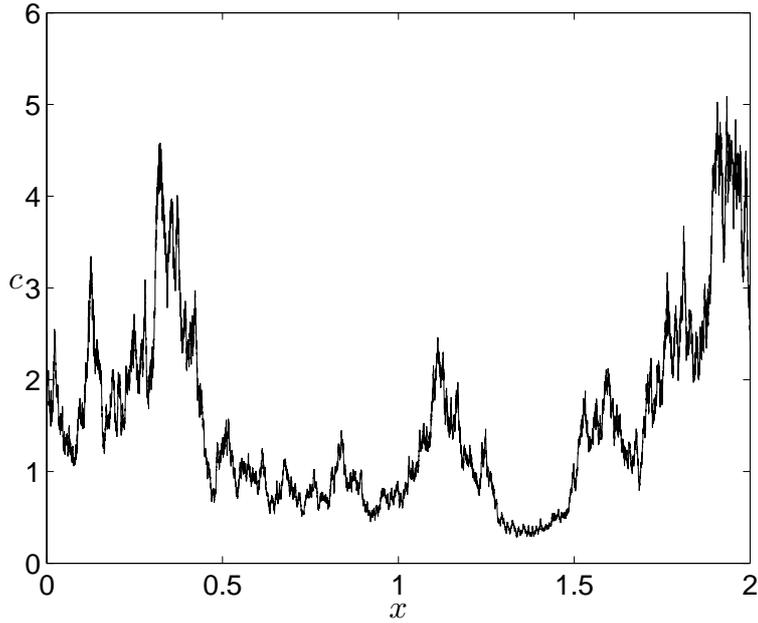}
 	\caption{The coefficient $a$ used for the numerical experiments for
 		the transport equation \eqref{eq:forw}.}
 	\label{fig:coef1d}
 \end{figure}
For the function $w_0$, we choose the product of the Lipschitz continuous hat function
\begin{equation*}
h(x)=\begin{cases}
	1+2(x-\nicefrac{1}{2}),&\quad x\in[0,0.5)\\
	1-2(x-\nicefrac{1}{2})&\quad x\in[0.5,1),\\
	1+2(x-\nicefrac{3}{2}),&\quad x\in[1,1.5),\\
	1-2(x-\nicefrac{3}{2}),&\quad x\in[1.5,2),
\end{cases}
\end{equation*}
with a modification of the Weierstrass function for different parameters $\gamma$:
\begin{equation}\label{eq:weier}
f^\gamma(x)=\sum_{n=1}^\infty 2^{-\gamma n}\cos(2^n \pi x),\quad\gamma\in (0,1),
\end{equation}
that is, we add a constant $f_0>0$ such that $f^\gamma$ becomes strictly
positive, and truncate $f^\gamma$ after $N=400$ terms,
\begin{equation}\label{eq:acoef}
\widetilde{f}^\gamma(x)=\sum_{n=1}^{400} 2^{-\gamma n}\cos(2^n \pi x)+f_0,\quad \alpha\in (0,1).
\end{equation}
and define $w_{0,\gamma}(x)=h(x)\widetilde{f}^\gamma(x)$.

It can be shown that \eqref{eq:weier} is nowhere differentiable but
H\"{o}lder continuous with exponent $\gamma$. As a computational
domain, we take $D=[0,2]$ with periodic boundary conditions.
We run experiments up to time $T=1$ with CFL-number $\theta=\nicefrac{0.4}{\overline{a}}$ with initial data $w_{0,\gamma}$ for $\gamma=\nicefrac{1}{2},\nicefrac{1}{4},\nicefrac{1}{8}$ and for $w_{0,0}(x):=h(x)$.

To approximate the coefficient, we interpolate \eqref{eq:acoef} and $a$ on a grid with meshsize $\Dx=2^{-14}$ and average it to obtain an approximation on the coarser grids. The reference solution has been computed on a grid with
$N_x=2^{14}$ mesh points. We have used the following approximation for
the numerical convergence rate
\begin{equation}\label{eq:rateapprox}
r^m=\frac{1}{N_{\mathrm{exp}}-1}\sum_{k=1}^{N_{\mathrm{exp}}-1}\frac{\log{\Epsilon^m_{\Dx_k}}-\log{\Epsilon^m_{\Dx_{k-1}}}}{\log{2}},\quad m=1,2
\end{equation}
where $\Dx_k=2^{-k}\Dx_0$ and $\Epsilon^m_{\Dx_k}$, the relative distance
of the approximation with gridsize $\Dx_k$ to the reference solution
in the discrete $L^m$-norm, that is,
\begin{align}\label{eq:relerr}
\Epsilon_{\Dx_k}^m&=100\times
\frac{\sum_{j=1}^{N_x}|u_{\Dx_k}(T,x_j)-u_{\Dx_{\mathrm{ref}}}(T,x_j)|^m}{\sum_{j=1}^{N_x}|u_{\Dx_{\mathrm{ref}}}(T,x_j)|^m}.
\end{align}

We used $\Dx_0=1/16$ ($N_{x,0}=32$) and $N_{\mathrm{exp}}=6$. In Figure
\ref{fig:transport_ref}, we have plotted the Weierstrass function and
the reference solution for H\"{o}lder exponent
$\gamma=1/2$. Interestingly, the variable $w$ seems to be much
smoother at time $T=1$ than initially and also much smoother than the
variable $u$. This is probably due to the diffusion in the scheme. In Table \ref{tab:er} the experimentally observed rates are computed for initial data $w_{0,\gamma}$ and $w_{0,0}$ for $\gamma=2^{-k}$, $k=2,4,8$. We notice that the experimental rates for this example are low but better than what we obtain from the theoretical estimates. This can be due to the fact that we compute the errors with respect to a reference solution computed by the same scheme. Moreover, other examples of initial data might give lower rates. However, we do not know whether the rates \eqref{eq:rae1d} and \eqref{eq:rae1dl2} are sharp.
\begin{figure}[ht] 
	\centering
	\begin{tabular}{lr}
		\hspace{-2em}\includegraphics[width=0.56\textwidth]{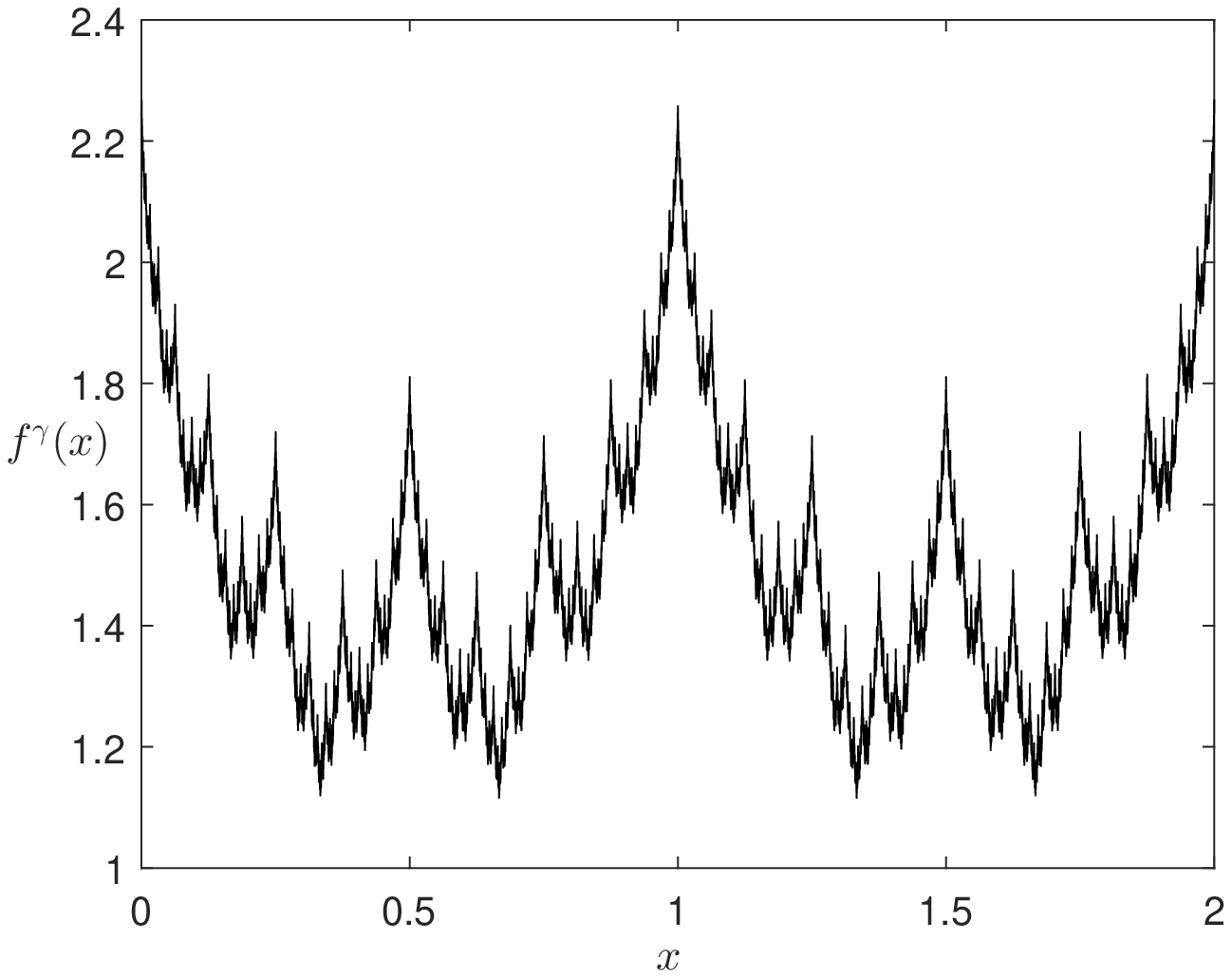}\hspace{-2em}
		\includegraphics[width=0.56\textwidth]{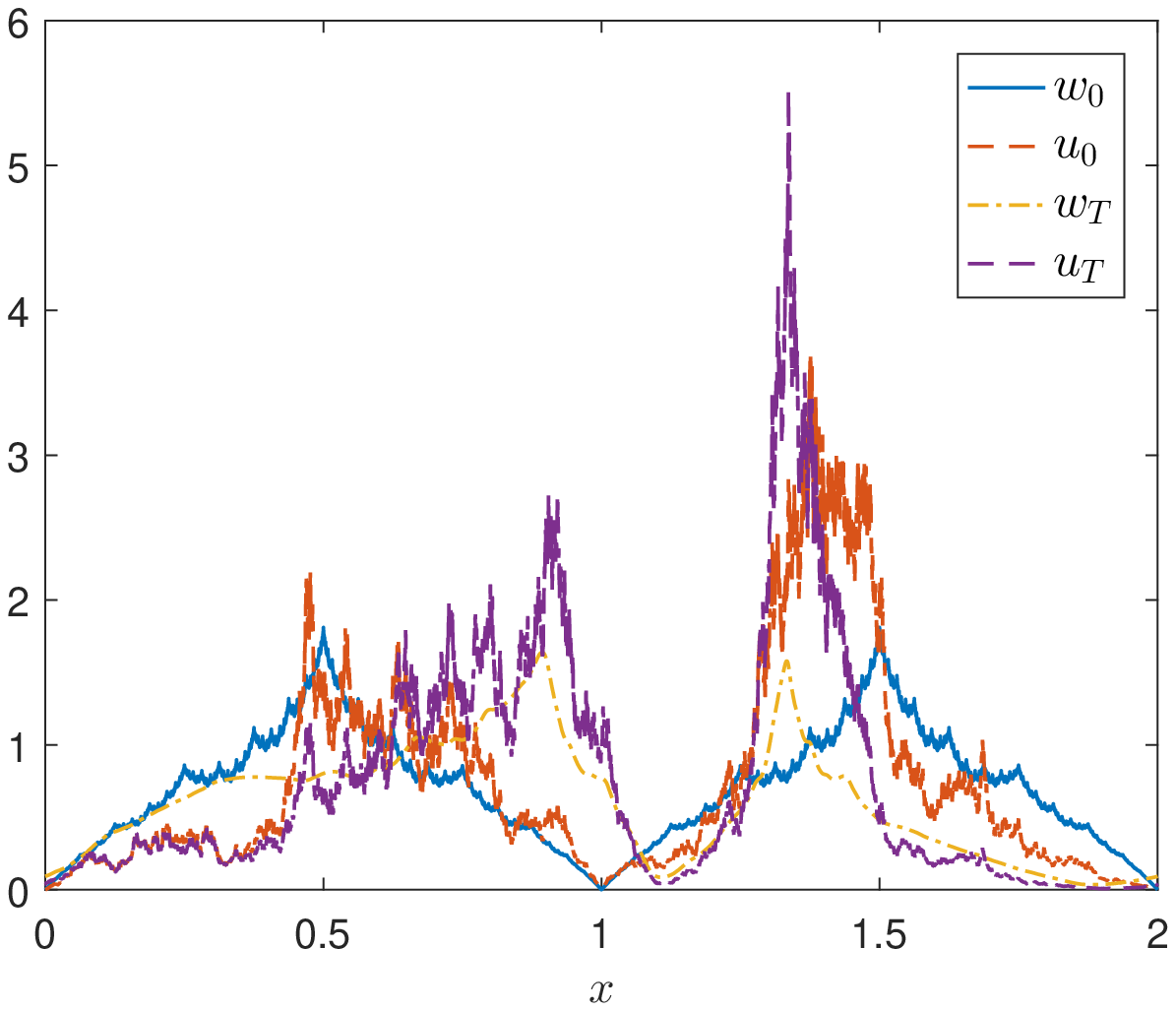}
	\end{tabular}
	\caption{Left: Approximation of Weierstrass function
		\eqref{eq:acoef} for $\gamma=1/2$. Right: Approximation of
		\eqref{eq:forw} by scheme \eqref{eq:upwindscheme}
		at time $T=0$ and $T=1$, $N_x=2^{14}$, $\gamma=1/2$.}
	\label{fig:transport_ref}
\end{figure}
\begin{table}[h]
	\centering
	\begin{tabular}[h]{|c|c|c|c|c|}
		\hline
	$\gamma$  &$ r^1_u$ & $r^1_w$ & $r^2_u$ & $r^2_w$\\
		\hline
		 $1$ &0.6018 &   0.5598 &   0.6468 &   0.5829 \\
		$1/2$ & 0.5170  &  0.4554  &  0.5400  &  0.4996\\
		$1/4$ &  0.4412 &   0.3816 &   0.4678 &   0.4356\\
		$1/8$ &  0.4550 &   0.3970 &   0.4810 &   0.4484\\
		\hline
	\end{tabular}
	\vspace{1mm}
	\caption{Experimental rates}
	\label{tab:er}
\end{table}

\section{A convergence rate for the wave equation in one space dimension}
\label{sec:1d}
The techniques from the last section can be used to prove a rate of convergence for approximate solutions to the acoustinc wave equation in one space dimension with rough coefficient under some assumptions. Defining $u:=p_x$ and $v:= p_t $, the second order wave equation
 \begin{equation*}
 \frac{1}{a(x)} p_{tt}(t,x)- p_{xx}(t,x)=0,\quad (t,x)\in D_T,
 \end{equation*}
 $D_T:=[0,T]\times D$, where $D=[d_L,d_R]$, $d_L<d_R\in [-\infty,\infty]$, can be rewritten as
\begin{align}\label{eq:1dwavew}
\begin{split}
u_t(t,x)-v_x(t,x)&=0,\\
\frac{1}{a(x)}v_t(t,x)-u_x(t,x)&=0, \quad (t,{x})\in D_T,
\end{split}
\end{align}
For simplicity, let us assume that $D=[0,2]$ with periodic boundary conditions.

\subsection{Numerical approximation of \eqref{eq:1dwavew} by a finite
  difference scheme}\label{ssec:rsys1}
In order to compute numerical approximations to \eqref{eq:1dwavew}, we
choose $\Delta x>0$ and discretize the spatial domain by a grid with
gridpoints $x_{j+1/2}:=j \Delta x$, $j\in \{1,\dots, N_x\}$, where $N_x\in\N$ is such that $N_x \Dx=|D|$. Similarly let
$\Dt$ denote the time step and $t^n = n \Dt$ with $n=0,1,\dots, N$
denote the $n$-th time level with $N\Dt = T$.

We define the averaged quantities

\begin{equation}\label{eq:cav}
  a_j=\frac{1}{\Delta x}\int_{x_{j-1/2}}^{x_{j+1/2}} a(x)\, dx,\quad j=1,\dots, N_x,
\end{equation}
and

\begin{equation}\label{eq:initdataapproxwave}
  \left(u_j^0,v_j^0\right)=\frac{1}{\Delta x}\left(\int_{x_{j-1/2}}^{x_{j+1/2}} u_0(x)\, dx, \int_{x_{j-1/2}}^{x_{j+1/2}} v_0(x) dx \right) \quad j=1,\dots, N_x.
\end{equation}

We recall \eqref{eq:Dt} and define approximations to \eqref{eq:1dwavew} by the finite
difference scheme:
\begin{subequations}\label{eq:numwave1}
  \begin{align}\label{eq:num11}
    D_t^+ u_j^n&=D_x^c v_j^n+\frac{\Dx}{2} D_x^+ D_x^- u_j^n,\\
    \label{eq:num12}
    \frac{D_t^+ v_j^n}{a_j}& = D_x^c u_j^n+ \frac{\Dx}{2} D_x^+ D_x^-
    v_j^n, \quad j\in \Z, n=1,\dots, N,
  \end{align}
\end{subequations}
with the time step $\Dt$ being chosen such that the CFL-condition,
\begin{equation}\label{eq:cflc}
  2\Dt \max_j\seq{\max\seq{2a_j+1,a_j/4+5/4}} \le \Dx
\end{equation}
is satisfied.

Moreover for any $k,l \in \R$, we define the discrete entropy (energy)
function and flux
\begin{equation*}
  \eta_j^n:=\frac{|u_j^n-k|^2}{2}+\frac{|v_j^n-\ell|^2}{2 a_j},\quad q_j^n:=-(u_j^n-k)(v_j^n-\ell).
\end{equation*}
Furthermore, we will for technical reasons need the following
difference quotients: We denote for $\gamma\in (0,1]$ and a discrete
quantity $\sigma_j^n$ defined on the grid,
\begin{equation}\label{eq:doofinotation}
  D^\pm_{\gamma,t} \sigma^n_j=\mp \frac{\sigma^n_j-\sigma_{j}^{n\pm 1}}{\Dt^\gamma},\quad D^\pm_{\gamma,x} \sigma^n_j = \mp \frac{\sigma^n_j-\sigma_{j\pm 1}^n}{\Dx^\gamma},\quad
  D^c_{\gamma,x} \sigma^n_j   = \frac{\sigma^n_{j+1}-\sigma_{j- 1}^n}{2\Dx^\gamma}.
\end{equation}
When $\gamma=1$, the above defined quantities reduce to the usual finite differences \eqref{eq:Dt}. The scheme \eqref{eq:numwave1} satisfies the following properties:

\begin{lemma}\label{lem:prowave1}
  Assume $a\in C^{0,\alpha}(D)$ and $u_0, v_0\in L^2(D)$. Then the
  numerical approximations $u_j^n$ and $v_j^n$ defined by
  \eqref{eq:numwave1}, \eqref{eq:cav} and
  \eqref{eq:initdataapproxwave} have the following properties:
  \begin{enumerate}
  \item[(i)] Discrete entropy inequality:
    \begin{multline}\label{eq:entrw11}
      \Dtp \eta^n_j + \Dc q^n_j \le \frac{\Dx\left(\Dt-\Dx\right)}{2}
      \Dm\left(\Dp
        \left(u^n_j-k\right)\Dp \left(v^n_j-l\right)\right) \\
      \begin{aligned}
        +\frac{\Dx}{4} \Dp\Dm\left(\left(u^n_j-k\right)^2 +
          \left(v^n_j-l\right)^2\right).
      \end{aligned}
    \end{multline}
  \item[(ii)] Bounds on the discrete $L^2$-norms:
    \begin{equation}\label{eq:wl2}
      \Dx \sum_j (u^n_j)^2+\frac{1}{a_j}(v^n_j)^2\le \Dx \sum_j
      (u^0_j)^2+\frac{1}{a_j}(v^0_j)^2\le
      \norm{u_0}_{L^2}^2+\norm{a^{-1/2}v_0}_{L^2}^2 
    \end{equation}
  \item[(iii)] For any function $w = w(x)$, define the $L^2$ modulus
    of continuity in space as $\gamma$ if,
    \begin{equation}\label{eq:initmocte}
      \nu_x^2(w,\sigma):=\sup_{\delta\leq \sigma}\int_{\R}|w(x+\delta)-w(x)|^2\, dx\leq C \, \sigma^{2\gamma}.
    \end{equation}

    If we also assume that the initial data $u_0$ and $v_0$ have
    moduli of continuity in $L^2(D)$,
    \begin{equation*}
      \nu_x^2(u_0,\sigma)\le C\sigma^{2\gamma},\quad
      \nu_x^2(v_0,\sigma)\le C\, \sigma^{2\gamma}, 
    \end{equation*}
    for some $\gamma>0$, the approximations satisfy,
    \begin{align}\label{eq:mocw11}
      \begin{split}
        &\Dx \sum_j \abs{D^+_{\gamma,t} u_j^n}^2+\frac{1}{a_j}
        \abs{D^+_{\gamma,t} v_j^n}^2\le C,\\
        &\Dx\sum_j \abs{D^c_{\gamma,x} u_j^n}^2 + \abs{D^c_{\gamma,x}
          v_j^n}^2+\frac{\Dx^2}{4} (\abs{D^+_{\gamma,x} D^-_x u_j^n}^2
        + \abs{D^+_{\gamma,x} D^-_x v_j^n}^2)\le C,
      \end{split}
    \end{align}
    for all $n=0,\dots, N_T$, where $C$ is a constant depending on $a$
    and the initial data $u_0$ and $v_0$.
  \end{enumerate}
\end{lemma}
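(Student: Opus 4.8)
My plan is to prove the three parts of Lemma~\ref{lem:prowave1} in the order listed, since each part builds on the previous one. Part~(i) is a direct computation: I would compute $\Dtp\eta^n_j$ using the scheme~\eqref{eq:numwave1}, expand the squares via the binomial identity~\eqref{eq:binom} (as was done for the $L^2$-entropy inequality in Lemma~\ref{lem:pdisc}), and carefully track which terms combine into the flux-difference $\Dc q^n_j$ and which become the numerical-diffusion remainders on the right-hand side. The CFL-condition~\eqref{eq:cflc} should be exactly what guarantees the sign of the leading remainder term so that the inequality (rather than equality) holds. Since the scheme for $(u,v)$ is linear, I would replace $u^n_j,v^n_j$ by $u^n_j-k, v^n_j-\ell$ from the outset, as the scheme is invariant under subtracting constants.

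\textbf{From the entropy inequality to the $L^2$-bound.}
For part~(ii), the plan is to sum the entropy inequality~\eqref{eq:entrw11} over all $j$. By periodicity (or compact support), every term on the right-hand side is a spatial difference $\Dm(\cdots)$ or $\Dp\Dm(\cdots)$ of a grid quantity, so it telescopes to zero when summed over $j$. Likewise the flux term $\sum_j \Dc q^n_j$ telescopes to zero. This leaves $\Dx\sum_j \Dtp\eta^n_j \le 0$, i.e.\ $\sum_j \eta^{n+1}_j \le \sum_j \eta^n_j$, and iterating in $n$ together with the definition of $\eta^n_j$ and the averaging bound $\|u^0\|_{\ell^2}\le\|u_0\|_{L^2}$ (Jensen's inequality on~\eqref{eq:initdataapproxwave}) yields~\eqref{eq:wl2}.

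\textbf{The modulus-of-continuity bounds.}
Part~(iii) is the substantial step and where I expect the main obstacle. The first bound in~\eqref{eq:mocw11}, on the time-differences $D^+_{\gamma,t}u^n_j$ and $D^+_{\gamma,t}v^n_j$, I would obtain from the scheme itself: the scheme expresses $D^+_t u^n_j$ and $D^+_t v^n_j$ in terms of spatial differences of $u,v$, so $\|D^+_t u^n\|_{\ell^2}^2$ is controlled by the spatial-difference bounds, and then $\|D^+_{\gamma,t}u^n\|^2 = \Dt^{2(1-\gamma)}\|D^+_t u^n\|^2$ carries the correct power of $\Dt$. The key is the spatial bound: the strategy is to show that the quantities $\Dx\sum_j|D^c_{\gamma,x}u^n_j|^2 + |D^c_{\gamma,x}v^n_j|^2 + \tfrac{\Dx^2}{4}(|D^+_{\gamma,x}D^-_x u^n_j|^2 + |D^+_{\gamma,x}D^-_x v^n_j|^2)$ satisfy essentially the same energy estimate as in part~(ii), but now applied to the shifted differences $u^n_{j+1}-u^n_j$ rather than to $u^n_j$ itself. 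Because the scheme has a spatially varying coefficient $a_j$, differentiating/shifting the scheme in space produces commutator terms involving $a_{j+1}-a_j$, and here the H\"older continuity $a\in C^{0,\alpha}$ enters to control $|a_{j+1}-a_j|\le C\Dx^\alpha$. The initial modulus of continuity~\eqref{eq:initmocte} provides the bound at $n=0$, and one propagates it in time via a discrete energy argument for the differenced scheme. The main obstacle will be handling these commutator terms generated by the rough coefficient and verifying that the powers of $\Dx$, $\Dt$, and the H\"older exponents $\alpha,\gamma$ balance so that the right-hand side remains bounded uniformly in the mesh parameters; this is precisely the place where the CFL-condition and the choice of the scaled difference quotients $D^\pm_{\gamma,\cdot}$ in~\eqref{eq:doofinotation} must be used in concert.
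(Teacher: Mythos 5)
Your plans for parts (i) and (ii) coincide with the paper's proof: a direct energy computation on the scheme (the case $k=\ell=0$ suffices by linearity), with the CFL-condition \eqref{eq:cflc} supplying the sign of the quadratic remainder, followed by multiplication by $\Dx$ and summation over $j$, under which the flux and diffusion terms telescope and the initial-data bound follows from Jensen's inequality.

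The genuine gap is in part (iii), and the obstacle you flag there is not one you could overcome by more careful bookkeeping: the route through spatial differencing fails in precisely the regime the lemma addresses. If you apply the energy argument to $\tilde u_j^n:=u_{j+1}^n-u_j^n$, $\tilde v_j^n:=v_{j+1}^n-v_j^n$, the second equation of \eqref{eq:numwave1} produces the commutator term $D_t^+ v_j^n\bigl(\tfrac{1}{a_{j+1}}-\tfrac{1}{a_j}\bigr)$. H\"older continuity gives $|a_{j+1}-a_j|\le C\Dx^{\alpha}$, but the scheme itself forces $|D_t^+ v_j^n|$ to be of order $\Dx^{-1}$ times first and second spatial differences, so each time step contributes an error of order $\Dt\,\Dx^{\alpha-1}$ times the shifted energy; a discrete Gr\"onwall argument over the $O(\Dt^{-1})$ steps up to time $T$ then yields a factor $\exp\left(CT\Dx^{\alpha-1}\right)$, which blows up as $\Dx\to 0$ whenever $\alpha<1$. (The same failure already occurs for the continuous equation: differentiating in $x$ produces a term involving $a_x$, which is only a distribution.) The paper avoids commutators entirely by differencing in \emph{time} rather than space: since $a_j$ does not depend on $n$, the time-shifted sequence $(u_j^{n+1},v_j^{n+1})$ solves the same scheme exactly, hence by linearity so does $(D^+_{\gamma,t}u_j^n,D^+_{\gamma,t}v_j^n)$, and the $L^2$-bound \eqref{eq:wl2} of part (ii) applies to it verbatim. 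The transfer between time and space differences is then purely algebraic, via the identity \eqref{eq:qer} (square the scheme and sum over $j$; the cross terms telescope by \eqref{eq:iprod2}): at $n=0$ it expresses the time differences through spatial differences of the initial data, which is where the assumed moduli \eqref{eq:initmocte} enter, and at general $n$ it converts the now uniformly bounded time differences back into the spatial combination appearing in \eqref{eq:mocw11} --- which is also why that bound involves exactly $|D^c_{\gamma,x}\cdot|^2+\tfrac{\Dx^2}{4}|D^+_{\gamma,x}D^-_x\cdot|^2$ rather than plain one-sided differences. So your ordering must be reversed (time bound first, spatial bound derived from it), and notably the H\"older seminorm of $a$ then plays no quantitative role in part (iii); only $\ua$, $\oa$ and the CFL ratio enter the constant.
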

\begin{proof}
  By linearity, it is sufficient to prove \eqref{eq:entrw11} for
  $k=l=0$. We shall use the following identities
  \begin{align}
    u^n_j \Dtp u^n_j &=\frac12 \Dtp \left(u^n_j\right)^2 - \frac{\Dt}2
    \left(\Dtp u^n_j\right)^2, \label{eq:tleibnitz}\\
    u^n_j \Dp\Dm u^n_j &= \frac12 \Dp\Dm \left(u^n_j\right)^2
    \label{eq:iprod1}
    - \frac12 \left(\left(\Dm u^n_j\right)^2 + \left(\Dp
        u^n_j\right)^2\right),\\
    \Dm\left(\Dp u^n_j \Dp v^n_j\right) &= \left(\Dp\Dm u^n_j
    \right)\Dc v^n_j +\left( \Dp\Dm v^n_j \right)
    \Dc u^n_j,\label{eq:iprod2}\\
    u^n_j \Dc v^n_j + v^n_j \Dc u^n_j &= \Dc \left(u^n_j v^n_j\right)
    - \frac{\Dx^2}{2} \Dm\left(\Dp u^n_j \Dp
      v^n_j\right).\label{eq:iprod3}
  \end{align}
  Multiplying \eqref{eq:num11} by $u^n_j$ and \eqref{eq:num12} by
  $v^n_j$ we get
  \begin{align*}
    \frac12 \Dtp\left(u^n_j\right)^2 - \frac{\Dt}{2} \left(\Dtp
      u^n_j\right)^2 &= u^n_j \Dc v^n_j + \frac{\Dx}{4}
    \Dp\Dm\left(u^n_j\right)^2 \\
    &\qquad - \frac{\Dx}{4} \left(\left(\Dm u^n_j\right)^2 + \left(\Dp
        u^n_j\right)^2\right) \\
    \frac1{2a_j} \Dtp\left(v^n_j\right)^2 - \frac{\Dt}{2a_j}
    \left(\Dtp v^n_j\right)^2 &= v^n_j \Dc u^n_j + \frac{\Dx}{4}
    \Dp\Dm \left(v^n_j\right)^2 \\
    &\qquad - \frac{\Dx}{4} \left(\left(\Dm v^n_j\right)^2 + \left(\Dp
        v^n_j\right)^2\right).
  \end{align*}
  Adding these two equations
  \begin{align*}
    \Dtp \eta^n_j &= \Dc\left(u^n_j v^n_j\right) - \frac{\Dx^2}{2}
    \Dm\left(\Dp u^n_j \Dp v^n_j\right)
    \\
    &\quad +\frac{\Dx}{4} \Dp\Dm
    \left(\left(u^n_j\right)^2+\left(v^n_j\right)^2\right)
    \\
    &\quad - \frac{\Dx}{4} \left(\left(\Dm u^n_j\right)^2+\left(\Dp
        u^n_j\right)^2 +
      \left(\Dm v^n_j\right)^2+\left(\Dp v^n_j\right)^2\right)\\
    &\quad + \frac{\Dt}{2}\left[\underbrace{ \left(\Dc v^n_j +
          \frac{\Dx}{2}\Dp\Dm u^n_j\right)^2 +a_j\left(\Dc u^n_j +
          \frac{\Dx}{2}\Dm\Dp v^n_j\right)^2}_K\right].
  \end{align*}
  We can estimate $K$ as follows
  \begin{align*}
    K&\le \frac12 \left(\left(\Dm u^n_j\right)^2+\left(\Dp
        u^n_j\right)^2 +
      a_j\left(\Dm v^n_j\right)^2+a_j\left(\Dp v^n_j\right)^2 \right)\\
    &\quad + \Dx\left(\Dp\Dm u^n_j \Dc v^n_j + a_j\Dp\Dm v^n_j \Dc
      u^n_j\right) + \frac{\Dx^2}{4} \left(\left(\Dp\Dm u^n_j\right)^2
      + a_j
      \left(\Dp\Dm v^n_j\right)^2\right)\\
    &\le \left(\Dm u^n_j\right)^2+\left(\Dp u^n_j\right)^2 +
    a_j\left(\Dm v^n_j\right)^2+a_j\left(\Dp v^n_j\right)^2 \\
    &\qquad + \Dx\left(\Dp\Dm u^n_j \Dc v^n_j + a_j\Dp\Dm v^n_j \Dc
      u^n_j\right) \\
    &= \left(\Dm u^n_j\right)^2+\left(\Dp u^n_j\right)^2 +
    a_j\left(\Dm v^n_j\right)^2+a_j\left(\Dp v^n_j\right)^2 \\
    &\qquad + \Dx \Dp\left(\Dm u^n_j \Dm v^n_j\right) +
    \Dx\left(a_j-1\right)\Dp\Dm v^n_j \Dc u^n_j,\\
    &\le \left(\Dm u^n_j\right)^2+\left(\Dp u^n_j\right)^2 +
    a_j\left(\Dm v^n_j\right)^2+a_j\left(\Dp v^n_j\right)^2 \\
    &\qquad  + \Dx \Dp\left(\Dm u^n_j \Dm v^n_j\right) \\
    &\qquad + \frac12 \abs{a_j-1} \left( \left(\abs{\Dp v_j^n} +
        \abs{\Dm v^n_j}\right)^2 + \frac14 \left(\Dm u^n_j+\Dp
        u^n_j\right)^2\right)\\
    &\le \Dx \Dp\left(\Dm u^n_j \Dm v^n_j\right) + \left(1+\frac14
      \abs{a_j-1}\right)\left(\Dm u^n_j\right)^2 +
    \left(1+\frac14 \abs{a_j-1}\right)\left(\Dp u^n_j\right)^2 \\
    &\qquad +\left(a_j+\abs{a_j-1}\right)\left(\Dm v^n_j\right)^2
    +\left(a_j+\abs{a_j-1}\right)\left(\Dp v^n_j\right)^2.
  \end{align*}
  This implies that
  \begin{align*}
    \Dtp \eta^n_j + \Dc q^n_j &\le \frac{\Dx\left(\Dt-\Dx\right)}{2}
    \Dm\left(\Dp
      u^n_j\Dp v^n_j\right) \\
    &\qquad +\frac{\Dx}{4}\Dp\Dm\left(\left(u^n_j\right)^2 +
      \left(v^n_j\right)^2\right)\\
    &\qquad \qquad +\frac12\left(\left(1+\frac14
        \abs{a_j-1}\right)\Dt-\frac{\Dx}{2}\right)\left(\Dm
      u^n_j\right)^2\\
    &\qquad \qquad +\frac12\left(\left(1+\frac14
        \abs{a_j-1}\right)\Dt-\frac{\Dx}{2}\right)\left(\Dp
      u^n_j\right)^2\\
    &\qquad \qquad +\frac12\left(\left(a_j+
        \abs{a_j-1}\right)\Dt-\frac{\Dx}{2}\right)\left(\Dm
      v^n_j\right)^2\\
    &\qquad \qquad +\frac12\left(\left(a_j+
        \abs{a_j-1}\right)\Dt-\frac{\Dx}{2}\right)\left(\Dp
      v^n_j\right)^2.
  \end{align*}
  If $\Dt$ satisfies the CFL-condition \eqref{eq:cflc}, the four last
  terms above are non-positive and \eqref{eq:entrw11} follows. The
  $L^2$ bound \eqref{eq:wl2} also follows upon summing over $j$ and
  multiplying by $\Dx$.

  By the linearity of the equation, \eqref{eq:wl2} also holds for the
  difference of two approximations computed by \eqref{eq:num11} and
  \eqref{eq:num12}, thus in particular for $D^+_{\gamma,t}u_j^n$ and
  $D^+_{\gamma,t}v_j^n$. Hence, using the handy equality
  \begin{multline}\label{eq:qer}
    \sum_j \abs{\Dtp u_j^n}^2+\frac{1}{a_j^2} \abs{\Dtp v_j^n}^2
    \\
    =\sum_j \abs{\Dc u_j^n}^2 + \abs{\Dc v_j^n}^2+\frac{\Dx^2}{4}
    \left(\abs{\Dp \Dm u_j^n}^2 + \abs{\Dp\Dm v_j^n}^2\right),
  \end{multline}
  the CFL-condition \eqref{eq:cflc}, \eqref{eq:wl2} implies
  \begin{align}\label{eq:gda}
    \Dx \sum_j \left(D^+_{\gamma,t} u_j^n\right)^2&+\frac{1}{a_j}
    \left(D^+_{\gamma,t} v_j^n\right)^2\\
    &\le \Dx \sum_j \left(D^+_{\gamma,t} u_j^0\right)^2+\frac{1}{a_j}
    \left(D^+_{\gamma,t} v_j^0\right)^2\notag
    \\
    &\le \max\{1,\overline{a}\}\Dx \sum_j \left(D^+_{\gamma,t}
      u_j^0\right)^2+\frac{1}{a_j^2} \left(D^+_{\gamma,t}
      v_j^0\right)^2
    \notag\\
    &= \max\{1,\overline{a}\}\Dx \Dt^{2-2\gamma} \sum_j \left(\Dc
      u_j^0\right)^2 + \left(\Dc v_j^0\right)^2\notag\\
    &\hphantom{= \max\{1,\overline{a}\}\Dx\, \theta^{2-2\gamma}
      \sum_j} +\frac{\Dx^2}{4} \left( \Dp\Dm u_j^0\right)^2 +
    \left(\Dp\Dm v_j^0\right)^2)
    \notag\\
    &\le \max\{1,\overline{a}\}\Dx\, \theta^{2-2\gamma} \sum_j
    \left(D^c_{\gamma,x} u_j^0\right)^2 + \left(D^c_{\gamma,x}
      v_j^0\right)^2\notag\\
    &\hphantom{= \max\{1,\overline{a}\}\Dx\, \theta^{2-2\gamma}
      \sum_j} +\frac{\Dx^2}{4} \left(\left(D^+_{\gamma,x} \Dm
        u_j^0\right)^2 + \left(D^+_{\gamma,x} \Dm
        v_j^0\right)^2\right)
    \notag\\
    &\le 2 \theta^{2-2\gamma} \max\{1,\overline{a}\}\Dx\sum_j
    \left(D^+_{\gamma,x} u_j^0\right)^2 + \left(D^+_{\gamma,x}
      v_j^0\right)^2 =:C(\alpha,u_0,v_0),\notag
  \end{align}
  where we have set $\theta=\Dt/\Dx$.  Applying \eqref{eq:qer} once
  more, we also obtain the second equation in \eqref{eq:mocw11},
  \begin{multline}\label{eq:mocux}
    \Dx\sum_j \left(D^c_{\gamma,x} u_j^n\right)^2 +
    \left(D^c_{\gamma,x} v_j^n\right)^2+\frac{\Dx^2}{4}
    \left(\left(D^+_{\gamma,x} \Dm u_j^n\right)^2 +
      \left(D^+_{\gamma,x} \Dm v_j^n\right)^2\right)
    \\
    =\theta^{2\gamma-2}\Dx\sum_j \left(D_{\gamma,t}^+ u_j^n\right)^2
    +\frac{1}{a_j^2} \left(D_{\gamma,t}^+ v_j^n\right)^2\le
    C(\alpha,u_0, v_0).
  \end{multline}
\end{proof}
Defining
\begin{subequations}\label{eq:discfcn2}
  \begin{align}\label{seq1:discvfcn}
    u_{\Dx}(t,x)&=u_j^n,\quad (t,x)\in [t^n,t^{n+1})\times [x_{j-1/2},x_{j+1/2}),\\
    \label{seq1:discmfcn2}
    v_{\Dx}(t,x)&=v_j^n,\quad (t,x)\in [t^n,t^{n+1})\times [x_{j-1/2},x_{j+1/2}),\\
    \label{seq1:discmfcn3}
    a_{\Dx}(x)&=a_j,\quad x\in [x_{j-1/2},x_{j+1/2}),
  \end{align}
\end{subequations}
we have that a subsequence  of $(u_{\Dx},v_{\Dx})_{\Dx>0}$ converges weakly to a weak solution of \eqref{eq:1dwavew}. If $\gamma=1$, equation \eqref{eq:mocw11} implies that $u$ and $v$ have an $L^2$-modulus of continuity of $\gamma=1$ in space and time and therefore by Ladyzhenskaya's theorems of interpolation of finite difference approximations, \cite[Lemmata 3.1, 3.2, Theorem 3.2]{Lady1985}, we get a strongly convergent subsequence to limit functions $u,v\in H^1(D_T)\cap \mathrm{Lip}([0,T];L^2(D))$. The limit functions satisfy the entropy inequality
\begin{equation}\label{eq:w11entropy}
\eta(u-k,v-\ell,a)_t+q(u-k,v-\ell)_x\le 0,\ \text{ in the sense of
	distributions,} 
\end{equation} 
where
\begin{equation}\label{eq:l2entropy}
\eta(u,v,a):=\frac{u^2}{2}+\frac{v^2}{2 a},\quad q(u,v):=-uv,
\end{equation}
which follows from \eqref{eq:wl2} in the limit $\Dx\rightarrow 0$.
They are therefore unique among solutions satisfying the entropy inequality (thanks to the linearity of the equation, we can insert another solution $(\widetilde{u}, \widetilde{v})$ for $(k,\ell)$).
\subsection{Convergence rate for the one dimensional wave
  equation.}\label{subsubsec:1drate}
In the last section, we showed that the numerical scheme
\eqref{eq:numwave1} converges to the weak solution of the 1-D wave
equation. However, the key question is the rate at which the
approximate solutions converge to the exact solution as the mesh is
refined, i.e., $\Dx \rightarrow 0$. The answer to this question is
provided in the following theorem,
  
 \begin{theorem}\label{lem:wave1d1}
   Let $a\in C^{0,\alpha}(\overline{\xdom})$ satisfy $\infty>\overline{a}\ge a(x)\ge
   \underline{a}>0$ for all $x\in D$. Denote by $(u,v)$ the solution
   of \eqref{eq:1dwavew} and $(u_\Dx, v_\Dx)$ the numerical
   approximation computed by the scheme \eqref{eq:numwave1} and
   defined in \eqref{eq:discfcn2}. Assume that the initial data
   $u_0,v_0\in L^2(D)$ and that $u,v, u_{\Dx}, v_{\Dx}$ have moduli of continuity
   \begin{equation}
   \label{eq:mocuv}
   \begin{split}
    \nu^2_x(u(t,\cdot),\sigma)\le C \, \sigma^{2\gamma},\quad
    &  \nu^2_x(v(t,\cdot),\sigma)\le C \, \sigma^{2\gamma},\\
     \nu^2_x(u_{\Dx}(t,\cdot),\sigma)\le C \, \sigma^{2\gamma},\quad
     & \nu^2_x(v_{\Dx}(t,\cdot),\sigma)\le C \, \sigma^{2\gamma} . 
     \end{split}  
   \end{equation}
   Then the approximation $(u_\Dx(t,\cdot),v_\Dx(t,\cdot))$ converges
   to the solution $(u(t,\cdot),v(t,\cdot))$, $0<t<T$, and we have the
   estimate on the rate
   \begin{multline}\label{eq:ratew1}
     \norm{(u-u_\Dx)(t,\cdot)}_{L^2(D)}+\norm{(v-v_\Dx)(t,\cdot)/a}_{L^2(D)}
     \\
     \le
     C\left(\norm{u_0-u_\Dx(0,\cdot)}_{L^2(D)}+\norm{(v_0-v_\Dx(0,\cdot))/a}_{L^2(D)}+
       \Dx^{(\alpha\gamma)/(2(\alpha\gamma+1-\gamma))}\right),
   \end{multline}
   where $C$ is a constant depending on $c$ and $T$ but not on $\Dx$.
 \end{theorem}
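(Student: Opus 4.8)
The plan is to transplant the doubling-of-variables argument of Theorem~\ref{lem:ratetransportl2} from the scalar Kru\v{z}kov entropy to the quadratic entropy--flux pair $(\eta,q)$ of \eqref{eq:l2entropy}. Since the system \eqref{eq:1dwavew} is linear, its entropy solution obeys \eqref{eq:w11entropy} with the constants $(k,\ell)$ replaced by any competing entropy solution; I would therefore insert the piecewise-constant numerical solution $(u_\Dx(t,x),v_\Dx(t,x))$ for $(k,\ell)$ in the continuous inequality \eqref{eq:w11entropy}, and symmetrically insert the exact solution $(u(s,y),v(s,y))$ for $(k,\ell)$ in the discrete inequality \eqref{eq:entrw11}. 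Testing the first against $\Omega$ from \eqref{eq:testfcn1} over $(s,y)\in\Dom$, the second over $(t,x)\in\Dom$, and adding, the entropy contributes the combined energy $\tfrac12|u_\Dx-u|^2+\tfrac1{2a}|v_\Dx-v|^2$ and the flux contributes the cross term $-(u_\Dx-u)(v_\Dx-v)$. Using \eqref{eq:Domega}--\eqref{eq:Domega2} I would move the $D^-_t\mathbf 1_{[\nu,\tau)}$ factor onto the boundary layers $t\in[\nu,\nu+\Dt)$ and $t\in[\tau,\tau+\Dt)$, which after mollification reproduce the averaged energy
\begin{equation*}
\kappa(t)=\int_0^T\!\!\int_{\xdom^2}\Bigl(\tfrac12|u_\Dx(t,x)-u(s-\Dt,y)|^2+\tfrac{1}{2a_\Dx}|v_\Dx(t,x)-v(s-\Dt,y)|^2\Bigr)\omega_{\epsx}\omega_{\epst}\,dxdyds
\end{equation*}
evaluated near $\tau$ and $\nu$.

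The remaining contributions split, as in the transport case, into a time-discretization term $A$ (built from $\partial_s\omega_{\epst}-D^+_s\omega_{\epst}$), a coefficient term $B$ (built from $a_\Dx-a$), and a space-discretization term $D$ (built from $\partial_y\omega_{\epsx}+\Dp\omega_{\epsx}$), together with two genuinely new terms coming from the numerical-viscosity right-hand side of \eqref{eq:entrw11}. I would bound $A$ using the temporal $L^2$-modulus of order $\gamma$, which for the discrete family is the single-step estimate on $D^+_{\gamma,t}$ in \eqref{eq:mocw11} (extended to the mollification scale $\epst$ and inherited by $(u,v)$ in the limit); $B$ using $\|a_\Dx-a\|_\infty+\sup_{|x-y|\le 2\epsx}|a(x)-a(y)|\le C(\Dx^\alpha+\epsx^\alpha)$ together with \eqref{eq:mocuv}; and $D$ by splitting the product defining $q$ via Cauchy--Schwarz into the spatial modulus \eqref{eq:mocuv} times an energy factor $\sqrt{\kappa}$, exactly as in the chain \eqref{eq:kappa2}. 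The point where the flux coupling bites is that these splittings must be arranged so that every error term emerges multiplied by a \emph{single} factor $\sqrt{\kappa(t)}$, ready to feed the Gr\"onwall sum, rather than producing an uncontrolled $\int|\Dc u_\Dx|^2$ or $\int|\Dc v_\Dx|^2$.

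The two new terms are $\tfrac{\Dx(\Dt-\Dx)}{2}\Dm\!\bigl(\Dp(u_\Dx-k)\,\Dp(v_\Dx-\ell)\bigr)$ and $\tfrac{\Dx}{4}\Dp\Dm\!\bigl((u_\Dx-k)^2+(v_\Dx-\ell)^2\bigr)$. After discrete summation by parts against $\Omega$ these reduce to expressions weighted by $\Dx$ and by the discrete first differences $\Dc u_\Dx,\Dc v_\Dx$ and $\Dx\,\Dp\Dm u_\Dx,\Dx\,\Dp\Dm v_\Dx$, all bounded in $L^2$ by \eqref{eq:mocw11}. Collecting the estimates turns the wave analogue of \eqref{eq:schissix} into an inequality of the form
\begin{equation*}
X_{N_2}^2\le X_{N_1}^2+M_1+\Dt\,M_2\sum_{i=N_1}^{N_2-1}X_i,\qquad X_k^2=\tfrac1\Dt\int_{k\Dt}^{(k+1)\Dt}\kappa(t)\,dt,
\end{equation*}
with $M_1,M_2$ collecting the accumulated powers of $\Dt,\Dx,\epst,\epsx$. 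Lemma~\ref{lem:discretegronwall} then gives $X_{N_2}\le X_{N_1}+\sqrt{M_1}+TM_2$; relating $X_{N_2}$ to $\norm{(u-u_\Dx)(\tau,\cdot)}_{L^2}+\norm{(v-v_\Dx)(\tau,\cdot)/a}_{L^2}$ and $X_{N_1}$ to the initial error (absorbing the $O(\epst^\gamma+\epsx^\gamma)$ mollification defects), and balancing $\nu=3\epst$, $\epst=\epsx^{\alpha}$ and $\epsx=\Dx^{1/(2(\alpha\gamma+1-\gamma))}$ so that the defect $\epst^\gamma$ and the leading viscosity contribution $\Dx^{1/2}\epsx^{-(1-\gamma)}$ coincide, produces the exponent $(\alpha\gamma)/(2(\alpha\gamma+1-\gamma))$ of \eqref{eq:ratew1}. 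The main obstacle I anticipate is precisely the joint closure: unlike the scalar case, the $q$-coupling forbids running the time-, space- and viscosity-estimates componentwise, so all Cauchy--Schwarz splittings must route through the combined energy $\kappa$; in particular one must ensure the $O(\Dx)$ viscosity terms enter through $M_1$, hence at the square-root level $\sqrt{M_1}$ (which is also the source of the factor $2$ in the denominator), rather than at the linear level $M_2$, since the latter would survive the chosen scaling and destroy the rate — all while controlling the exact solution's temporal modulus over the scale $\epst$ through the uniform discrete bound \eqref{eq:mocw11}.
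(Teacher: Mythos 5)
Your overall architecture coincides with the paper's: doubling of variables with the quadratic entropy--flux pair $(\eta,q)$, inserting each solution as the constants in the other's entropy inequality, splitting the error into a time-mollification term, a coefficient term and a space-mollification term plus the two viscosity terms from \eqref{eq:entrw11}, closing with a Gr\"onwall argument for the doubled energy, and balancing $\epst=\epsx^\alpha$, $\epsx=\Dx^{1/(2(\alpha\gamma+1-\gamma))}$. (The paper uses a smooth time cutoff $\psi^\mu$ and a continuous Gr\"onwall inequality where you use the sharp indicator and Lemma \ref{lem:discretegronwall}; that difference is immaterial.)

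The genuine gap is your treatment of the first viscosity term, the paper's term $E=\frac{\Dx^2}{2}(\theta-1)\int_{D_T^2}\Dym\left[\Dyp u_\Dx\,\Dyp v_\Dx\right]\Omega\,\duz$. You propose to sum by parts and control the resulting discrete differences ``in $L^2$ by \eqref{eq:mocw11}''. But \eqref{eq:mocw11} only yields $\norm{\Dp u_\Dx}_{L^2}, \norm{\Dp v_\Dx}_{L^2}\le C\Dx^{\gamma-1}$, so after summation by parts (one factor $\epsx^{-1}$ from $D^+_y\Omega$ once $x$ is integrated out) the best you get is $|E|\le C\,\Dx^{2}\cdot\Dx^{2\gamma-2}\cdot\epsx^{-1}=C\,\Dx^{2\gamma}/\epsx$. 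Under the required scaling $\epsx=\Dx^{1/(2\beta)}$ with $\beta=\alpha\gamma+1-\gamma$, keeping the rate forces $\Dx^{2\gamma}/\epsx\le \Dx^{\alpha\gamma/\beta}$, i.e.\ $4\gamma\beta-1\ge 2\alpha\gamma$, which fails for $\gamma=1$, $\alpha<1/2$, and fails badly for small $\gamma$: for $\gamma=1/4$, $\alpha=1/2$ your bound is $\Dx^{-1/14}$, which diverges as $\Dx\to0$. The paper never estimates $E$ at all; it observes that $E$ \emph{vanishes identically}: since $\Dyp\left(u_\Dx-u(t,x)\right)=\Dyp u_\Dx$, the integrand's only $x$-dependence is $\omega_{\epsx}(x-y)$, whose $x$-integral equals $1$ independently of $y$, and the remaining $y$-integral of the perfect difference $\Dym\left[\Dyp u_\Dx\Dyp v_\Dx\right]$ telescopes to zero under periodic boundary conditions (see \eqref{eq:Ebnd}). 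This exact cancellation is the missing idea, and no bound of your Cauchy--Schwarz form can substitute for it. Two related weaknesses: for the term $F$ you again invoke \eqref{eq:mocw11}, i.e.\ differences at scale $\Dx$ costing $\Dx^{\gamma-1}$, whereas the paper keeps both differences on $\Omega$ and pays with the assumed moduli \eqref{eq:mocuv} at scale $\epsx$, getting $\frac{\Dx}{\epsx^{2-\gamma}}\sqrt{\lambda}$ rather than your larger $\frac{\Dx^{\gamma}}{\epsx}\sqrt{\lambda}$; and your claim that the single-step bound on $D^+_{\gamma,t}$ in \eqref{eq:mocw11} ``extends to the mollification scale $\epst$'' is not automatic, since telescoping $\epst/\Dt$ steps each of size $C\Dt^\gamma$ gives $C\,\epst\,\Dt^{\gamma-1}$, not $C\epst^\gamma$ — the temporal modulus at scale $\epst$ has to be assumed or derived separately, as the paper does through \eqref{eq:mocuv}.
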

 \begin{remark}
 	If the initial data $u_0, v_0$ have moduli of continuity
 	\begin{equation*}
 	\nu_x^2(u_0,\sigma)\leq C \sigma^2,\quad \nu_x^2(v_0,\sigma)\leq C \sigma^2,
 	\end{equation*}
 	it follows from Lemma \ref{lem:prowave1} that $u,v$ have moduli of continuity in space and time with $\gamma=1$.
 \end{remark}
 \begin{proof}
   We let $\phi\in C^2_0((0,T)\times D)$ and define
   \begin{equation}\label{eq:lambdafcnwave}
     \Lambda_T(u,v,k,\ell,\phi):=\int_{D_T}
     \left(\frac{(u-k)^2}{2}+\frac{(v-\ell)^2}{2
         a}\right)\phi_t- (u-k)(v-\ell)\phi_x \,dxdt 
   \end{equation}
   The above definition is similar to the one used in Section \ref{sec:l2acvection} an adaptation of the Kru\v{z}kov doubling
   of variables technique \cite{HHNHRise07} in our current $L^2$
   setting.
We will use special test functions in $\Lambda_T$: We recall the definition of the mollifier $\omega_\epsilon$ in \eqref{eq:omegadelta} and define for some $0<\nu<\tau<T$,
   \begin{equation*}
     \psi^{\mu}(t):=H_\mu(t-\nu)-H_\mu(t-\tau),\quad
     H_\mu(t)=\int_{-\infty}^t \omega_\mu(\xi)\, d\xi. 
   \end{equation*}
   We define  $\Omega:\Dom^2\rightarrow \R$ by
   \begin{equation}\label{eq:testfcn1}
     \Omega(t,s,x,y)=\psi^\mu(t)\omega_{\epst}(t-s)\omega_{\epsx}(x-y).
   \end{equation}
   We choose $\nu$ and $\tau$ such that
   $0<\epst<\min\{\nu,T-\tau\}$ and
   $0<\mu<\min\{\nu-\epst,T-\tau-\epst\}$. We note that
   \begin{equation*}
     \Omega_t+\Omega_s=\psi^\mu_t \omega_{\epsx} \omega_{\epst},\quad \Omega_x+\Omega_y=0.
   \end{equation*} 
   We assume without loss of generality $\Dx\le
   \min\{\epsx,\epst,\nu\}$.  By the entropy inequality
   \eqref{eq:w11entropy}, we have for the solution $(u,v)$ of
   \eqref{eq:1dwavew} that
   $\Lambda_T(u,v,u_\Dx(s,y),v_\Dx(s,y),\phi)\ge 0$ for all $(s,y)\in
   D_T$ and test functions $\phi\in C^2_0((0,T)\times D)$. By
   \eqref{eq:entrw11}, we have on the other hand that
   \begin{multline}\label{eq:lala2}
     \int_{D_T}
     \left(\frac{(u_\Dx-u(t,x))^2}{2}+\frac{(v_{\Dx}-v(t,x))^2}{2
         a}\right)D^-_s\phi\,- (u_\Dx-u(t,x))(v_\Dx-v(t,x))\Dyc\phi
     \,dyds
     \\
     \ge
     \int_{D_T}(v_{\Dx}-v(t,x))^2\left(\frac{1}{2a}-\frac{1}{2a_\Dx}\right)
     D^-_s\phi\,
     dyds\\
     -\frac{\Dx^2}{2}(\theta-1)\int_{D_T}
     \left(D^+_y (u_\Dx-u) D^+_y (v_\Dx-v)\right)\,D_y^+\phi\, dyds\\
     +\frac{\Dx}{4}\int_{D_T}(D^+_y (v_\Dx-v(t,x))^2+D^+_y
     (u_\Dx-u(t,x))^2)D^+_y \phi\, dyds
   \end{multline}
   where $D^-_s\phi$ and $D^+_y \phi$ are defined by
   \begin{equation}\label{eq:diffquot}
     D^\pm_s\phi(s,y)=\mp\frac{\phi(s,y)-\phi(s\pm\Dt,y)}{\Dt},\quad D^\pm_y\phi(s,y)=\mp\frac{\phi(s,y)-\phi(s,y\pm \Dx)}{\Dx}.
   \end{equation}
   Adding $\Lambda_T(u,v,u_\Dx(s,y),v_\Dx(s,y),\phi)\ge 0$ and
   \eqref{eq:lala2}, choosing $\Omega$ as a test function and
   integrating over $D_T$, we obtain
   \begin{multline}\label{eq:ilbeltz}
     \underbrace{\int_{D_T^2}
       \left(\frac{(u_\Dx-u)^2}{2}+\frac{(v_{\Dx}-v)^2}{2
           a}\right)\left(\Omega_t+D^-_s\Omega\right)\,\duz}_A \\
     - \underbrace{\int_{D_T^2} (u_\Dx-u)(v_\Dx-v)\left(\Omega_x+\Dyc
         \Omega\right) \,\duz}_B
     \\
     \ge
     \underbrace{\int_{D_T^2}(v_{\Dx}-v)^2\left(\frac{1}{2a(x)}-\frac{1}{2a_\Dx(y)}\right)
       D^-_s\Omega\,\duz}_D
     \\
     +\underbrace{\frac{\Dx^2}{2}(\theta-1)\int_{D_T^2}\Dym\left[ \Dyp
         \left(u_\Dx-u\right)\Dyp\left( v_\Dx-v\right)\right] \Omega
       \, \duz}_E
     \\
     -\underbrace{\frac{\Dx}{4}\int_{D_T^2}((v_\Dx-v(t,x))^2+(u_\Dx-u(t,x))^2)D^-_y
       D^+_y \Omega\, \duz}_F
   \end{multline}
   We rewrite the term $A$ as
   \begin{align*}
     A&=\int_{D_T^2} \eta(u-u_\Dx,v-v_\Dx,a)(\Omega_t+D^-_s\Omega)\duz\\
     &=\underbrace{\int_{D_T^2} \eta(u-u_\Dx,v-v_\Dx,a)\psi^\mu_t
       \omega_{\epsx}
       \omega_{\epst}\,\duz}_{A_1}\\
     &\qquad+ \underbrace{\int_{D_T^2}
       \eta(u-u_\Dx,v-v_\Dx,a)\psi^\mu\,
       \omega_{\epsx}\left(\partial_t\omega_{\epst}+D^-_s\omega_{\epst}
       \right)\,\duz}_{A_2}
   \end{align*}
   The term $A_1$ can be written as
   \begin{equation*}
     A_1=\int_{D_T^2} \eta(u-u_\Dx,v-v_\Dx,a)\omega_\mu(t-\nu)
     \omega_{\epsx} \omega_{\epst}\,\duz-\int_{D_T^2}
     \eta(u-u_\Dx,v-v_\Dx,a)\omega_\mu(t-\tau) \omega_{\epsx}
     \omega_{\epst}\,\duz.
   \end{equation*}
   Introducing $\lambda$ as
   \begin{equation}
     \label{eq:lambdadef}
     \begin{aligned}
       \lambda(t)=\int_0^T\int_{D^2} &\eta\left(u_\Dx(s,y)-u(t,x),
         v_\Dx(s,y)-v(x,t),a(x)\right) \\
       &\times \omega_{\epsx}(x-y)\omega_{\epst}(t-s)\,dydxds,
     \end{aligned}
   \end{equation}
   we have that
   \begin{equation*}
     A_1 = \int_0^T \lambda(t)\omega_\mu(t-\nu)\,dt - \int_0^T
     \lambda(t)\omega_\mu(t-\tau)\,dt, 
   \end{equation*}
   so that \eqref{eq:ilbeltz} implies
   \begin{equation}\label{eq:ilbeltz2}
     \int_0^T \lambda(t)\omega_\mu(t-\nu)\,dt 
     +\abs{A_2}+\abs{B}+\abs{D}+\abs{E}+\abs{F}
     \ge\int_0^T
     \lambda(t)\omega_\mu(t-\tau)\,dt.
   \end{equation}
   Our task is now to overestimate $\abs{A_2}$, $\abs{B}$, $\abs{D}$,
   $\abs{E}$ and $\abs{F}$.

   To estimate the term $A_2$, we recall \eqref{eq:omegas} and observe that, (cf. \eqref{eq:null0})
   \begin{equation*}
     \frac{1}{\Dt} \int_0^T\int_0^\Dt
     \eta(u(t,x)-u_\Dx(t,y),v(t,x)-v_\Dx(t,y),a)
     (\xi-\Dt)\partial_{ss}\omega_{\epst}(t-s+\xi)\,d\xi ds= 0,
   \end{equation*}
   since all the terms in the integrand except
   $\partial_{ss}\omega_{\epst}(t-s+\xi)$ are independent of
   $s$. Therefore, subtracting this term from $A_2$, we obtain,
   \begin{multline*}
     A_2= \underbrace{\frac{1}{2\Dt}\int_{D_T^2}\int_0^\Dt
       (u_{\Dx}(t,y)-u_{\Dx})(2 u-u_{\Dx}- u_{\Dx}(t,y))\psi^\mu\,
       \omega_{\epsx}\,(\xi-\Dt)\partial_{ss}\omega_{\epst}(t-s+\xi)\,d\xi\duz}_{A_{2,1}}\\
     +\underbrace{\frac{1}{2\Dt}\int_{D_T^2}\int_0^\Dt
       \frac{1}{a}(v_{\Dx}(t,y)-v_{\Dx})(2v-v_{\Dx}-
       v_{\Dx}(t,y))\psi^\mu\,
       \omega_{\epsx}\,(\xi-\Dt)\partial_{ss}\omega_{\epst}(t-s+\xi)\,d\xi\duz}_{A_{2,2}}
   \end{multline*}
   We will outline the estimates for the term $A_{2,1}$, the term $A_{2,2}$
   is estimated in a similarly. By the triangle and H\"{o}lder's
   inequality
   \begin{align}\label{eq:a2w1}
     |A_{2,1}|\le & \frac{1}{2\Dt}\int_{D_T^2}\int_0^\Dt |u_{\Dx}(t,y)-u_{\Dx}(s,y)|\bigl( |u(t,x)-u_{\Dx}(s,y)|+|u(t,x)- u_{\Dx}(t,y)|\bigr)\\
     &\qquad\times\psi^\mu\,
     \omega_{\epsx}\,|\xi-\Dt|\, |\partial_{ss}\omega_{\epst}(t-s+\xi)|\,d\xi\duz\notag\\
     &\le \frac{1}{2\Dt}\int_0^{\Dt}\int_0^T\int_0^T \biggl( \int_{D^2} |u_{\Dx}(t,y)-u_{\Dx}(s,y)|^2\omega_{\epsx} \, dy\, dx\biggr)^{1/2}\notag\\
     &\qquad \times \biggl\{ \biggl( \int_{D^2} |u(t,x)-u_{\Dx}(s,y)|^2\omega_{\epsx} \, dy\, dx\biggr)^{1/2}\notag\\
     &\qquad\quad+\biggl( \int_{D^2} |u(t,x)-u_{\Dx}(t,y)|^2\omega_{\epsx} \, dy\, dx\biggr)^{1/2}\biggr\} \notag\\
     &\qquad\qquad \times\psi^\mu\,
     |\xi-\Dt|\, |\partial_{ss}\omega_{\epst}(t-s+\xi)|\, ds\, dtd\xi\,\notag\\
     &\le \frac{1}{2\Dt}\int_0^{\Dt}\int_0^T \sup_{{0\le s\le T}\atop{|t-s|<2\epst}}\biggl( \int_{D^2} |u_{\Dx}(t,y)-u_{\Dx}(s,y)|^2\omega_{\epsx} \, dy\, dx\biggr)^{1/2}\notag\\
     &\qquad \times \biggl\{ \sup_{{0\le s\le T}\atop{|t-s|<2 \epst}}\biggl( \int_{D^2} |u(t,x)-u_{\Dx}(s,y)|^2\omega_{\epsx} \, dy\, dx\biggr)^{1/2}\notag\\
     &\qquad\quad+\biggl( \int_{D^2} |u(t,x)-u_{\Dx}(t,y)|^2\omega_{\epsx} \, dy\, dx\biggr)^{1/2}\biggr\} \notag\\
     &\qquad\qquad \times\psi^\mu\,
     |\xi-\Dt|\, \int_0^T|\partial_{ss}\omega_{\epst}(t-s+\xi)|\, ds\, dt \,d\xi\notag\\
     &\le \frac{C}{\Dt\, \epst^{2-\gamma}}\int_0^{\Dt}\int_0^T
     \biggl\{ \sup_{{0\le s\le T}\atop{|t-s|<2 \epst}}\biggl( \int_{D^2} |u(t,x)-u_{\Dx}(s,y)|^2\omega_{\epsx} \, dy\, dx\biggr)^{1/2}\notag\\
     &\qquad\quad+\biggl( \int_{D^2}
     |u(t,x)-u_{\Dx}(t,y)|^2\omega_{\epsx} \, dy\,
     dx\biggr)^{1/2}\biggr\}\psi^\mu\,
     |\xi-\Dt|\, dt \,d\xi\notag\\
     &\le \frac{C\Dt}{\epst^{2-\gamma}}\int_0^{\Dt}\int_0^T
     \biggl\{ \sup_{{0\le s\le T}\atop{|t-s|<2 \epst}}\biggl( \int_{D^2} |u(t,x)-u_{\Dx}(s,y)|^2\omega_{\epsx} \, dy\, dx\biggr)^{1/2}\notag\\
     &\qquad\quad+\biggl( \int_{D^2}
     |u(t,x)-u_{\Dx}(t,y)|^2\omega_{\epsx} \, dy\,
     dx\biggr)^{1/2}\biggr\} \psi^\mu\, \, dt \notag
   \end{align}
   where we used the moduli of continuity for $u_\Dx$, 
   viz.~\eqref{eq:mocuv}, in the penultimate inequality and that
   $\Dt\le \epst$. Furthermore, in a similar way as we did for the advection equation in \eqref{eq:kappa1}, one can show that
    \begin{align}\label{eq:eis}
    \int_0^T&\sup_{{0\le s\le
    		T}\atop{|t-s|<\epst}}\left(\int_{D^2}\abs{u_\Dx(s,y)-u(t,x)}^2
    \omega_{\epsx}\, dydx\right)^{1/2}\psi^\mu\,dt
    \\
    &\le C T \epst^\gamma +\int_0^T
    \left(\int_0^T\int_{D^2}\abs{u_\Dx(s,y)-u(t,x)}^2
    \omega_{\epsx}\omega_{\epst}\,
    dydxds\right)^{1/2}\!\!\psi^\mu\,dt\notag
    \end{align}
   using the triangle inequality and similarly
   \begin{multline}\label{eq:zwei}
     \int_0^T\sup_{{0\le s\le T}\atop{|t-s|<\epst}}
     \left(\int_{D^2}\frac{1}{a}\abs{v_\Dx(s,y)-v(t,x)}^2\omega_{\epsx}\,
       dydx\right)^{1/2}\!\!\psi^\mu\, dt
     \\
     \le \frac{C T \epst^\gamma}{\underline{a}} +\int_0^T
     \left(\int_0^T\int_{D^2}\frac{1}{a}\abs{v_\Dx(s,y)-v(t,x)}^2
       \omega_{\epsx}\omega_{\epst}\,
       dydxds\right)^{1/2}\!\!\psi^\mu\,dt.
   \end{multline}
   Using $\lambda$, cf.~\eqref{eq:lambdadef}, \eqref{eq:a2w1} can be
   bounded as
   \begin{equation*}
     \abs{A_{2,2}}\le C \Dt\,\epst^{2\gamma-2}+\frac{C
       \Dt}{\epst^{2-\gamma}}\int_0^T \sqrt{\lambda(t)}\,\psi^\mu\,
     dt.
   \end{equation*}
   and so, using a similar argument for the term $A_{2,1}$
   \begin{equation}\label{eq:a2w1nomal}
     \abs{A_{2}}\le C \Dt\,\epst^{2\gamma-2}+\frac{C
       \Dt}{\epst^{2-\gamma}}\int_0^T \sqrt{\lambda(t)}\,\psi^\mu\,
     dt.
   \end{equation}
   In order to bound the term $B$, we use
   \begin{align*}
     \Omega_x + \Dyc \Omega &= \frac{-1}{4\Dx} \int_0^\Dx (\xi-\Dx)^2
     \left[\partial_{yyy}\Omega(t,s,x,y-\xi)+\partial_{yyy}
       \Omega(t,s,x,y+\xi)\right]\,d\xi\\
     &= \frac{1}{4\Dx} \int_0^\Dx (\xi-\Dx)^2
     \left[\partial_{xxx}\Omega(t,s,x,y-\xi)+\partial_{xxx}
       \Omega(t,s,x,y+\xi)\right]\,d\xi.
   \end{align*}
   and that
   \begin{multline*}
     \frac{1}{4 \Dx}\int_0^\Dx\int_{D_T^2} (\xi-\Dx)^2
     \left(u_\Dx-u(t,y)\right)\left(v_\Dx-v(t,y)\right)\\
     \times\left[\partial_{xxx}\omega_{\epsx}(x-y+\xi)+\partial_{xxx}
       \omega_{\epsx}(x-y-\xi)\right]\omega_{\epst}\psi^\mu\,d\xi
     \duz =0,
   \end{multline*}
   since all the terms in the integrand, except
   $\left[\partial_{xxx}\omega_{\epsx}(x-y+\xi)+\partial_{xxx}
     \omega_{\epsx}(x-y-\xi)\right]$, are independent of $x$. We
   subtract this term from $B$ and add and subtract the term
   \begin{multline*}
     \frac{1}{4\Dx}\int_0^\Dx\int_{D_T^2} (\xi-\Dx)^2
     \left(u_\Dx-u(t,y)\right)\left(v_\Dx-v(t,x)\right)\\
     \times\left[\partial_{xxx}\omega_{\epsx}(x-y+\xi)+\partial_{xxx}
       \omega_{\epsx}(x-y-\xi)\right]\omega_{\epst}\psi^\mu\,d\xi
     \duz,
   \end{multline*}
   so that
   \begin{align*}
     B&= \frac{1}{4\Dx}\int_0^\Dx\int_{D_T^2} (\xi-\Dx)^2
     \left(u(t,y)-u(t,x)\right)\left(v_\Dx- v(t,x)\right)\\
     &\hphantom{=\frac{1}{4\Dx}\int_0^\Dx\int_{D_T^2}}\times\left[\partial_{xxx}\omega_{\epsx}(x-y+\xi)+\partial_{xxx}
       \omega_{\epsx}(x-y-\xi)\right]\omega_{\epst}\psi^\mu\,d\xi \duz\\
     &\hphantom{=}+\frac{1}{4\Dx}\int_0^\Dx\int_{D_T^2} (\xi-\Dx)^2
     \left(u_\Dx-u(t,y)\right)\left(v(t,y)- v(t,x)\right)\\
     &\hphantom{=\frac{1}{4\Dx}\int_0^\Dx\int_{D_T^2}}\times\left[\partial_{xxx}\omega_{\epsx}(x-y+\xi)+\partial_{xxx}
       \omega_{\epsx}(x-y-\xi)\right]\omega_{\epst}\psi^\mu\,d\xi \duz\\
     &:=B_1+B_2.
   \end{align*}
   We start by bounding $B_1$,
   \begin{align*}
     |B_1|&\le \frac{1}{4\Dx}\int_0^\Dx\int_{D_T^2} (\xi-\Dx)^2
     |u(t,y)-u(t,x)|\,|v_\Dx- v(t,x)|\\
     &\hphantom{\le \frac{1}{4\Dx}\int_0^\Dx\int_{D_T^2}}\times
     |\partial_{xxx}\omega_{\epsx}(x-y+\xi)+\partial_{xxx}
     \omega_{\epsx}(x-y-\xi)|\,\omega_{\epst}\psi^\mu\,d\xi \duz\\
     &\le \frac{1}{4\Dx}\int_0^\Dx \int_{D_T} \biggl(\int_{D_T}|u(t,y)-u(t,x)|^2\omega_{\epst}\, dy\, ds\biggr)^{1/2}\\
     &\hphantom{\le \frac{1}{4\Dx} \int_0^\Dx \int_{D_T}}
     \times\biggl(\int_{D_T}|v_\Dx- v(t,x)|^2\omega_{\epst}\, dy\, ds\biggr)^{1/2}(\xi-\Dx)^2  \\
     &\hphantom{\le \frac{1}{4\Dx}\int_0^\Dx \int_{D_T}}\times
     |\partial_{xxx}\omega_{\epsx}(x-y+\xi)+\partial_{xxx}
     \omega_{\epsx}(x-y-\xi)|\,\psi^\mu\,dx\, dt\,d\xi\\
     &\le \frac{1}{4\Dx}\int_0^\Dx \int_{0}^T \sup_{{x\ \mathrm{s.t.}}\atop{\abs{x-y}\le 3{\epsx}}}\biggl(\int_{D_T}|u(t,y)-u(t,x)|^2\omega_{\epst}\, dy\, ds\biggr)^{1/2}\\
     &\hphantom{\le \frac{1}{4\Dx} \int_0^\Dx \int_{D_T}}
     \times \sup_{{x\ \mathrm{s.t.}}\atop{\abs{x-y}\le 3{\epsx}}}\biggl(\int_{D_T}|v_\Dx- v(t,x)|^2\omega_{\epst}\, dy\, ds\biggr)^{1/2}(\xi-\Dx)^2  \\
     &\hphantom{\le \frac{1}{4\Dx}\int_0^\Dx \int_{D_T}}\times
     \int_D|\partial_{xxx}\omega_{\epsx}(x-y+\xi)+\partial_{xxx}
     \omega_{\epsx}(x-y-\xi)|\,dx\,\psi^\mu\, dt\,d\xi\\
     &\le \frac{C}{{\epsx}^{3-\gamma}\Dx}\int_0^\Dx \int_{0}^T \sup_{{x\ \mathrm{s.t.}}\atop{\abs{x-y}\le 3{\epsx}}}\biggl(\int_{D_T}|v_\Dx- v(t,x)|^2\omega_{\epst}\, dy\, ds\biggr)^{1/2}(\xi-\Dx)^2\psi^\mu\, dt\,d\xi\\
     &\le \frac{C \Dx^2}{{\epsx}^{3-\gamma}} \int_{0}^T \sup_{{x\
         \mathrm{s.t.}}\atop{\abs{x-y}\le
         3\epsx}}\biggl(\int_{D_T}|v_\Dx-
     v(t,x)|^2\omega_{\epst}\, dy\, ds\biggr)^{1/2}\psi^\mu\, dt
   \end{align*}
   where we have used that $\omega_{\epsx}$ is compactly supported in
   $[-\epsx,\epsx]$, and where $C$ is a constant depending on
   the $L^2$-norms and the moduli of continuity of the initial data
   and on $T$. Using that (c.f.~\eqref{eq:eis}, \eqref{eq:kappa2})
    \begin{align}\label{eq:drue}
    \int_0^T&\sup_{{x\ \mathrm{s.t.}}\atop{\abs{x-y}\le 3\epsx}}
    \biggl(\int_{D_T}\abs{u_\Dx(s,y)-u(t,x)}^2\omega_{\epst}\,
    dyds\biggr)^{1/2}\psi^\mu\,dt\\
    &\le C T \epsx^\gamma +\int_0^T
    \biggl(\int_0^T\int_{D^2}\abs{u_\Dx(s,y)-u(t,x)}^2
    \omega_{\epsx}\omega_{\epst}\,
    dydxds\biggr)^{1/2}\psi^\mu\,dt,\notag
    \end{align}
   and analogously,
   \begin{multline}\label{eq:zwei2}
     \int_0^T\sup_{{x\ \mathrm{s.t.}}\atop{\abs{x-y}\le
         3\epsx}}\biggl(\int_{D^T}\abs{v_\Dx(s,y)-v(t,x)}^2
     \omega_{\epst}\, dyds\biggr)^{1/2}\psi^\mu\,dt\\
     \le C T \epsx^\gamma +\int_0^T
     \biggl(\int_0^T\int_{D^2}\abs{v_\Dx(s,y)-v(t,x)}^2
     \omega_{\epsx}\omega_{\epst}\,
     dydxds\biggr)^{1/2}\psi^\mu\,dt,
   \end{multline}
   for $B_1$ we obtain the estimate
   \begin{equation}\label{eq:b1w1}
     |B_1|\le \frac{C\Dx^{2}}{\epsx^{3-2\gamma}}
     +\frac{C
       \Dx^{2}}{\epsx^{3-\gamma}}\int_0^T \sqrt{\lambda(t)}\,\psi^\mu\, dt.
   \end{equation}
   Similarly
   \begin{align*}
     |B_2|&\le \frac{1}{4\Dx}\int_0^\Dx\int_{D_T^2} (\xi-\Dx)^2
     |u_\Dx-u(t,y)|\,|v(t,y)- v(t,x)|\\
     &\hphantom{\le \frac{1}{4\Dx}\int_0^\Dx\int_{D_T^2}}\times
     |\partial_{xxx}\omega_{\epsx}(x-y+\xi)+\partial_{xxx}
     \omega_{\epsx}(x-y-\xi)|\,\omega_{\epst}\psi^\mu\,d\xi \duz\\
     &\le \frac{C \Dx^2}{\epsx^{3-\gamma}} \int_{0}^T
     \biggl(\int_{D_T}|v_\Dx(s,y)- v(t,y)|^2\omega_{\epst}\, dy\,
     ds\biggr)^{1/2}\psi^\mu\, dt
   \end{align*}
   Using \eqref{eq:zwei2}, we find, as for $B_1$,
   \begin{equation}\label{eq:b2w1}
     |B_2|\le \frac{C\Dx^{2}}{\epsx^{3-2\gamma}}
     +\frac{C
       \Dx^{2}}{\epsx^{3-\gamma}}\int_0^T \sqrt{\lambda(t)}\,\psi^\mu\, dt,
   \end{equation}
   and therefore
   \begin{equation}\label{eq:bw1}
     |B|\le \frac{C\Dx^{2}}{\epsx^{3-2\gamma}}
     +\frac{C
       \Dx^{2}}{\epsx^{3-\gamma}}\int_0^T \sqrt{\lambda(t)}\,\psi^\mu\, dt.
   \end{equation}
   We proceed to bound the term $D$. Observing that
   \begin{equation*}
     \int_{D_T^2}(v(t,x)-v_\Dx(t,y))^2\biggl(\frac{1}{2a(x)}-\frac{1}{2a_\Dx(y)}
     \biggr)D^-_s\Omega
     \,\duz=0, 
   \end{equation*}
   we can rewrite $D$ as
   \begin{equation*}
     D=\int_{D_T^2}\bigl(\left(v(t,x)-v_\Dx(t,y)\right)^2
     -\left(v(t,x)-v_\Dx(s,y)\right)^2\bigr)
     \biggl(\frac{1}{2a(x)}-\frac{1}{2a_\Dx(y)}\biggr)D^-_s\Omega
     \,\duz. 
   \end{equation*}
   Noting that,
   \begin{equation}\label{eq:vier}
     D_s^- \Omega(t,s,x,y)=\frac{1}{\Dt}\int^{\Dt}_0\Omega_s(t,s-\xi,x,y)\, d\xi,
   \end{equation}
   this becomes
   \begin{align*}
     D=\frac{1}{\Dt}\int_{D_T^2}\int_0^\Dt&(2v(t,x)-v_\Dx(t,y)-v_\Dx(s,y))\\
     &\quad
     \times(v_\Dx(t,y)-v_\Dx(s,y))\frac{a_\Dx(y)-a(x)}{2a(x)a_\Dx(y)}\Omega_s
     \,d\xi\,\duz,
   \end{align*}
   which can be bounded by
   \begin{align}\label{eq:cw1}
     |D|&\le \frac{1}{2\underline{a} \Dt}\sup_{|x-y|<\epsx}
     \abs{a(x)-a_\Dx(y)}
     \\
     &\quad\times\int_{D_T^2}\int_0^\Dt \frac{1}{c}
     \abs{2v(t,x)-v_\Dx(t,y)-v_\Dx(s,y)}\abs{v_\Dx(t,y)-v_\Dx(s,y)}
     \abs{\Omega_s} \,d\xi\,\duz
     \notag\\
     &\le \frac{C(\epsx+\Dx)^\alpha}{2\underline{a} \,\epst}
     \sup_{t\in (0,T)}\nu_t^2(v_\Dx(t,\cdot),\epst)^{1/2}
     \notag\\
     &\quad \times\int_0^T\sup_{{0\le s\le T}\atop{|t-s|<\epst}}
     \biggl(\int_{D^2}\frac{1}{a}\abs{v_\Dx(t,y)-v(s,x)}^2
     \omega_{\epsx}\, dydx \biggr)^{1/2}\,\psi^\mu\, dt
     \notag\\
     &\le \frac{C(\epsx+\Dx)^\alpha}{2\underline{a}
       \epst^{1-2\gamma}}+\frac{C(\epsx+\Dx)^\alpha}{2\underline{a}
       \epst^{1-\gamma}}\int_0^T\sqrt{\lambda(t)}\,\psi^\mu\,
     dt,\notag
   \end{align}
   where we have used \eqref{eq:zwei} for the last inequality.  For
   the term $E$, we note that it can be written
   \begin{equation*}
     E=\frac{\Dx^2}{2}(\theta-1)\int_0^T\int_{D_T}\Dym\left[ \Dyp
       u_\Dx\Dyp
       v_\Dx\right]\int_D \omega_{\epsx}(x-y)\, dx\,\omega_{\epst} \psi^\mu \, dy\, ds\, dt,    
   \end{equation*}
   so that
   \begin{align}\label{eq:Ebnd}
     E&=\frac{\Dx^2}{2}(\theta-1)\int_0^T\int_{D_T}\Dym\left[ \Dyp
       u_\Dx\Dyp v_\Dx\right]\,\omega_{\epst} \psi^\mu \, dy\, ds\, dt,\\
     &=\frac{\Dx^3}{2}(\theta-1)\int_0^T \int_0^T\sum_j\Dym\left[ \Dyp
       u_\Dx(s,x_j)\Dyp v_\Dx(s,x_j)\right]\,\omega_{\epst} \psi^\mu \, \, ds\, dt.\notag\\
     &=0\notag
   \end{align}
   In order to estimate the term $F$, we use that
   \begin{equation}\label{eq:dddiff}
     D^+_x D^-_x \phi(x)=\frac{1}{2 \Dx^2} \int_{-\Dx}^0\int_0^\Dx \phi''(x+\eta+\xi)\, d\xi\, d\eta,
   \end{equation}
   and that
   \begin{equation*}
     \frac{1}{8 \Dx} \int_{-\Dx}^0\int_0^\Dx\int_{D_T^2}((v_\Dx-v(t,y))^2+(u_\Dx-u(t,y))^2)
     \partial^2_x\omega_{\epsx}(x-y-\eta-\xi)\,\omega_{\epst}\psi^\mu \duz\, d\xi\, d\eta=0,
   \end{equation*}
   since all the terms in the integrand, but
   $\partial^2_x\omega_{\epsx}(x-y-\eta-\xi)$ are independent of
   $x$. We subtract this term from $F$ to find
   \begin{align*}
     F&=\underbrace{\frac{1}{8 \Dx} \int_{-\Dx}^0\int_0^\Dx\int_{D_T^2}(v-v(t,y))(v+v(t,y)-2v_\Dx)\partial^2_x\omega_{\epsx}(x-y-\eta-\xi)\,\omega_{\epst}\psi^\mu \duz\, d\xi\, d\eta}_{F_1}\\
     &+\underbrace{\frac{1}{8 \Dx}
       \int_{-\Dx}^0\int_0^\Dx\int_{D_T^2}(u-u(t,y))(u+u(t,y)-2
       u_\Dx)\partial^2_x\omega_{\epsx}(x-y-\eta-\xi)\,\omega_{\epst}\psi^\mu
       \duz\, d\xi\, d\eta}_{F_2}.
   \end{align*}
   The integrals $F_1$ and $F_2$ are estimated in the same way,
   therefore we outline only the estimate of $F_1$.
   \begin{align*}
     |F_1|&\le\frac{1}{8 \Dx}
     \int_{-\Dx}^0\int_0^\Dx\int_{D_T^2}|v-v(t,y)|\bigl(|v-v_\Dx|+|v(t,y)-v_\Dx|\bigr)| \partial^2_x\omega_{\epsx}|\,\omega_{\epst}\psi^\mu
     \duz\, d\xi\, d\eta\\
     &\le \frac{1}{8 \Dx} \int_{-\Dx}^0\int_0^\Dx\int_{0}^T\sup_{{x\
         \mathrm{s.t.}}\atop{\abs{x-y}\le 3\epsx}}\biggl(\int_{D_T}
     |v-v(t,y)|^2 \omega_{\epst}\, dy\, ds\biggr)^{1/2} \\
     &\hphantom{\le\frac{1}{8 \Dx}
       \int_{-\Dx}^0\int_0^\Dx\int_{0}^T}\times \biggl\{\sup_{{x\
         \mathrm{s.t.}}\atop{\abs{x-y}\le
         3\epsx}}\biggl(\int_{D_T}|v-v_\Dx|^2\omega_{\epst}\,
     dy\, ds\biggr)^{1/2}\\
     &\hphantom{\le\frac{1}{8 \Dx}
       \int_{-\Dx}^0\int_0^\Dx\int_{0}^T\biggl\{\sup_{{x\
           \mathrm{s.t.}}\atop{\abs{x-y}\le 3\epsx}}}+
     \biggl(\int_{D_T}|v(t,y)-v_\Dx|^2\omega_{\epst}\, dy\,
     ds\biggr)^{1/2}\biggr\}\\
     & \hphantom{\le\frac{1}{8 \Dx}
       \int_{-\Dx}^0\int_0^\Dx\int_{0}^T}\int_D
     |\partial^2_x\omega_{\epsx}|\,dx \,\psi^\mu \, dt d\xi\, d\eta\\
     &\le \frac{C \Dx}{\epsx^{2-\gamma}} \int_{0}^T \biggl\{
     \sup_{{x\ \mathrm{s.t.}}\atop{\abs{x-y}\le
         3\epsx}}\biggl(\int_{D_T}|v-v_\Dx|^2\omega_{\epst}\,
     dy\, ds\biggr)^{1/2}\\
     &\hphantom{\le \frac{C \Dx}{\epsx^{2-\gamma}} \int_{0}^T
       \sup_{{x\ \mathrm{s.t.}}\atop{\abs{x-y}\le
           3\epsx}}\biggl\{}+
     \biggl(\int_{D_T}|v(t,y)-v_\Dx|^2\omega_{\epst}\, dy\,
     ds\biggr)^{1/2}\biggr\} \psi^\mu \, dt.
   \end{align*}
   Using \eqref{eq:zwei2}, we find
   \begin{equation*}
     |F_1|\le \frac{C \Dx}{\epsx^{2-2\gamma}} +\frac{C
       \Dx}{\epsx^{2-\gamma}}  \int_{0}^T \sqrt{\lambda(t)}\psi^\mu
     \, dt, 
   \end{equation*}
   and therefore
   \begin{equation}\label{eq:ew1}
     \abs{F}\le \frac{C
       \Dx}{\epsx^{2-2\gamma}}
     +\frac{C\Dx}{\epsx^{2-\gamma}}\int_0^T\sqrt{\lambda(t)}\,\psi^\mu\,dt. 
   \end{equation}
   Referring to \eqref{eq:ilbeltz2}, we have established the following
   bounds
   \begin{align*}
     \abs{A_2}&\le C\left(\frac{\Dx}{\epst^{2-2\gamma}}
       +\frac{\Dx}{\epst^{2-\gamma}}\int_0^T \sqrt{\lambda(t)}\,
       \psi^\mu \,dt \right),\\
     \abs{B}&\le C\left(\frac{\Dx^{2}}{\epsx^{3-2\gamma}} +
       \frac{\Dx^{2}}{\epsx^{3-\gamma}} \int_0^T
       \sqrt{\lambda(t)}\,
       \psi^\mu \,dt\right),\\
     \abs{D}&\le C\left(\frac{\epsx^\alpha}{\epsx^{1-2\gamma}}
       + \frac{\epsx^\alpha}{\epst^{1-\gamma}}\int_0^T
       \sqrt{\lambda(t)}\,
       \psi^\mu \,dt\right),\\
     \abs{E}&= 0,\\
     \abs{F}&\le C\left(\frac{\Dx}{\epsx^{2-2\gamma}} +
       \frac{\Dx}{\epsx^{2-\gamma}}\int_0^T \sqrt{\lambda(t)}\,
       \psi^\mu \,dt\right),
   \end{align*}
   where we have used that $\Dt=C\Dx$ and $\Dx\le \epsx$.  Hence,
   \begin{multline*}
     \int_0^T\lambda(t)\omega_\mu(t-\tau)\,dt \le
     \int_0^T\lambda(t)\omega_\mu(t-\nu)\,dt +\underbrace{ C\left(
         \frac{\Dx}{\epst^{2-2\gamma}} +
         \frac{\Dx^{2}}{\epst^{3-2\gamma}} +
         \frac{\epsx^\alpha}{\epst^{1-2\gamma}}+\frac{\Dx}{\epsx^{2-2\gamma}} \right)}_{M_1}\\
     + \underbrace{
       C\left(\frac{\Dx}{\epst^{2-\gamma}}+\frac{\Dx^{2}}{\epsx^{3-\gamma}}
         +\frac{{\epsx}^\alpha}{\epst^{1-\gamma}}+
         \frac{\Dx}{\epsx^{2-\gamma}}\right)}_{M_2} \int_0^T
     \sqrt{\lambda(t)}\, \psi^\mu \,dt.
   \end{multline*}
   Sending $\mu$ to zero, we find
   \begin{equation*}
     \lambda(\tau) \le \lambda(\nu) + M_1 + M_2 \int_\nu^\tau \sqrt{\lambda(t)}\,dt.
   \end{equation*}
   With an application of a Gr\"{o}nwall type inequality, \cite[Chapter 1,
   Theorem 4]{gronwall1}, we obtain the estimate
   \begin{equation}\label{eq:dummichue}
     \lambda(\tau)\le \biggl(\sqrt{ \lambda(\nu)
       +M_1}+(\tau-\nu)M_2\biggr)^2\le
     2\left(\lambda(\nu) + M_1 + T^2 M_2^2 \right).
   \end{equation}
   By the triangle inequality, we have
   \begin{align}\label{eq:ex}
     \begin{split}
       \biggl|&\biggl(\int_D\int_{D_T}\abs{u_\Dx(s,x)-u(t,y)}^2
       \omega_{\epsx}\omega_{\epst}\, dxds dy\biggr)^{1/2}
       -\norm{u(t,\cdot)-u_\Dx(t,\cdot)}_{L^2(D)} \biggr|
       \\
       &\le \biggl(\int_D\int_{D_T}\abs{u_\Dx(s,x)-u_\Dx(t,y)}^2
       \omega_{\epsx}\omega_{\epst}\, dxdsdy\biggr)^{1/2}
       \\
       &\le \biggl(\int_{D^2}\abs{u_\Dx(t,x)-u_\Dx(t,y)}^2
       \omega_{\epsx}\, dxdy\biggr)^{1/2}
       +\biggl(\int_{D_T}\abs{u_\Dx(t,x)-u_\Dx(s,x)}^2
       \omega_{\epst}\, dsdx\biggr)^{1/2}
       \\
       &\le C \, (\epst^\gamma+\epsx^\gamma),
     \end{split}
   \end{align}
   and similarly
   \begin{multline}\label{eq:ex2}
     \biggl|\biggl(\int_D\int_{D_T}
     \frac{1}{a(x)}\abs{v_\Dx(t,x)-v(s,y)}^2
     \omega_{\epsx}\omega_{\epst}\, dxdsdy\biggr)^{1/2}
     -\norm{(v-v_\Dx)(t,\cdot)/a}_{L^2(D)} \biggr|
     \\
     \le C \, (\epst^\gamma+\epsx^\gamma).
   \end{multline}
   Moreover,
   \begin{equation}\label{eq:initte}
     \begin{aligned}
       &\norm{(u-u_\Dx)(\nu,\cdot)}_{L^2(D)}
       +\norm{(v-v_\Dx)(\nu,\cdot)/a}_{L^2(D)}
       \\
       &\quad\quad\le \norm{u_\Dx(\nu,\cdot)-u_\Dx(0,\cdot)}_{L^2(D)}
       +\norm{(v_\Dx(\nu,\cdot)-v_\Dx(0,\cdot))/a}_{L^2(D)}
       \\
       &\quad\quad\qquad+ \norm{u_0-u_\Dx(0,\cdot)}_{L^2(D)}
       +\norm{(v_0-v_\Dx(0,\cdot))/a}_{L^2(D)}
       \\
       &\quad\quad\qquad+\norm{u(\nu,\cdot)-u_0}_{L^2(D)}
       +\norm{(v(\nu,\cdot)-v_0)/a}_{L^2(D)}
       \\
       &\quad\quad\le C (\nu+\Dt)^\gamma
       +\norm{u_0-u_\Dx(0,\cdot)}_{L^2(D)}+\norm{(v_0-v_\Dx(0,\cdot))/a}_{L^2(D)}.
     \end{aligned}
   \end{equation}
   Write
   \begin{equation*}
     e(\tau)=\norm{(u-u_\Dx)(\tau,\cdot)}_{L^2(D)} +
     \norm{(v-v_\Dx)(\tau,\cdot)/a}_{L^2(D)}.
   \end{equation*}
   Thus, combining \eqref{eq:dummichue}, \eqref{eq:ex}, \eqref{eq:ex2}
   and \eqref{eq:initte}, the definition of $M_1$ and $M_2$ and some
   basic calculus inequalities, we obtain
   \begin{align}
     e^2(\tau)&\le C\biggl(e^2(0) + \epsx^{2\gamma} +
     \epst^{2\gamma} + \frac{\Dx}{\epst^{2-2\gamma}} +
     \frac{\epsx^\alpha}{\epst^{1-2\gamma}}+
     \frac{\Dx^2}{\epst^{4-2\gamma}}
     \label{eq:eest1}\\
     &\hphantom{\le C\Biggl(}\quad +
     \frac{\Dx^{4}}{\epsx^{6-2\gamma}} +
     \frac{\epsx^{2\alpha}}{\epst^{2(1-\gamma)}} +
     \frac{\Dx^{2}}{\epsx^{3-2\gamma}}+\frac{\Dx}{\epsx^{2-2\gamma}}+\frac{\Dx^2}{\epsx^{4-2\gamma}}
     \biggr).\notag
   \end{align}
   Hence, choosing $\epsx=\epst^{1/\alpha}$ and
   $\epsx=\Dx^{1/(2(\gamma\alpha+1-\gamma))}$,
   \begin{equation*}
     e(\tau)
     \le
     C\left(e(0)+\Dx^{(\alpha\gamma)/(2(\alpha\gamma+1-\gamma))}\right).
   \end{equation*}
 \end{proof}
\begin{remark}
	We note that for $\gamma=1$, this reduces to a rate of $\Dx^{1/2}$ independently of $\alpha$.
\end{remark}

 \subsection{Numerical examples}\label{ssec:numericalexampleswave}
 Next, we shall compare the above derived convergence rates to the
 ones obtained in practice. To this end, we implement the finite
 difference scheme \eqref{eq:numwave1} and test it on a set of
 numerical test cases. For all the test cases, we use the interval
 $D=[0,2]$ as the computational domain with periodic boundary
 conditions.
 We use again the sample of a log-normally distributed random field from Section \ref{ssec:exprates} as a material coefficient $a$ (c.f. Figure \ref{fig:coef1d}).
 We compute approximations at time $T=1$ and test the scheme in this
setting  with different choices of initial data. We only test the case that the initial data $v_0, u_0$ have a moduli of continuity $\gamma=1$ for which we could show in Lemma \ref{lem:prowave1} that the solutions have the same moduli of continuity. In this case, Theorem \ref{lem:wave1d1} predicts a rate of convergence of  $\nicefrac{1}{2}$. Specifically, we run experiments with initial data
\begin{equation}
\label{eq:initw1}
	v_{0,1}(x)=\sin(2\pi x), \quad u_{0,1}(x)=\cos(2\pi x),
\end{equation}
with
\begin{equation}
\label{eq:initw2}
v_{0,2}(x)=\begin{cases}
-1-2(x-\nicefrac{1}{2}),&\quad x\in[0,0.5)\\
-1+2(x-\nicefrac{1}{2})&\quad x\in[0.5,1),\\
-1-2(x-\nicefrac{3}{2}),&\quad x\in[1,1.5),\\
-1+2(x-\nicefrac{3}{2}),&\quad x\in[1.5,2),
\end{cases}
\quad u_{0,2}(x)=\begin{cases}
1+2(x-\nicefrac{1}{2}),&\quad x\in[0,0.5)\\
1-2(x-\nicefrac{1}{2})&\quad x\in[0.5,1),\\
1+2(x-\nicefrac{3}{2}),&\quad x\in[1,1.5),\\
1-2(x-\nicefrac{3}{2}),&\quad x\in[1.5,2),
\end{cases}
\end{equation}
(note that $v_{0,2}=-u_{0,2}$). As a third set of initial data we take $v_{0,3}=v_{0,2}$ and for $u_{0,3}$ we take the composition of $30$ random hat functions on $[0,2]$, i.e.
\begin{equation*}
u_{0,3}=\sum_{j=1}^{30} h_j(x),
\end{equation*}
where $h_j$ is given by
\begin{equation*}
h_j(x)=\begin{cases}
0, &\quad x\in[0,x_0]\cup (x_2,2]\\
q \frac{x-x_0}{x_1-x_0},&\quad x\in(x_0,x_1],\\
q\frac{x_2 - x}{x_2-x_1},&\quad x\in(x_1,x_2],
\end{cases}
\end{equation*}
where $q\sim \mathcal{U}(-1,1)$, $x_0\sim \mathcal{U}(0,1)$, $x_1\sim \mathcal{U}(x_0,2)$ and $x_2\sim \mathcal{U}(x_1,2)$ are samples of uniformly distributed random variables. The initial data $u_{0,3}$ and $v_{0,3}$ are pictured in Figure \ref{fig:initdataw} on the left, and the approximation of the linear wave equation by scheme \eqref{eq:numwave1} at time $T=1$ on the right (on a grid with $2^{14}$ points).

\begin{figure}[ht] 
	\centering
	\begin{tabular}{lr}
		\includegraphics[width=0.5\textwidth]{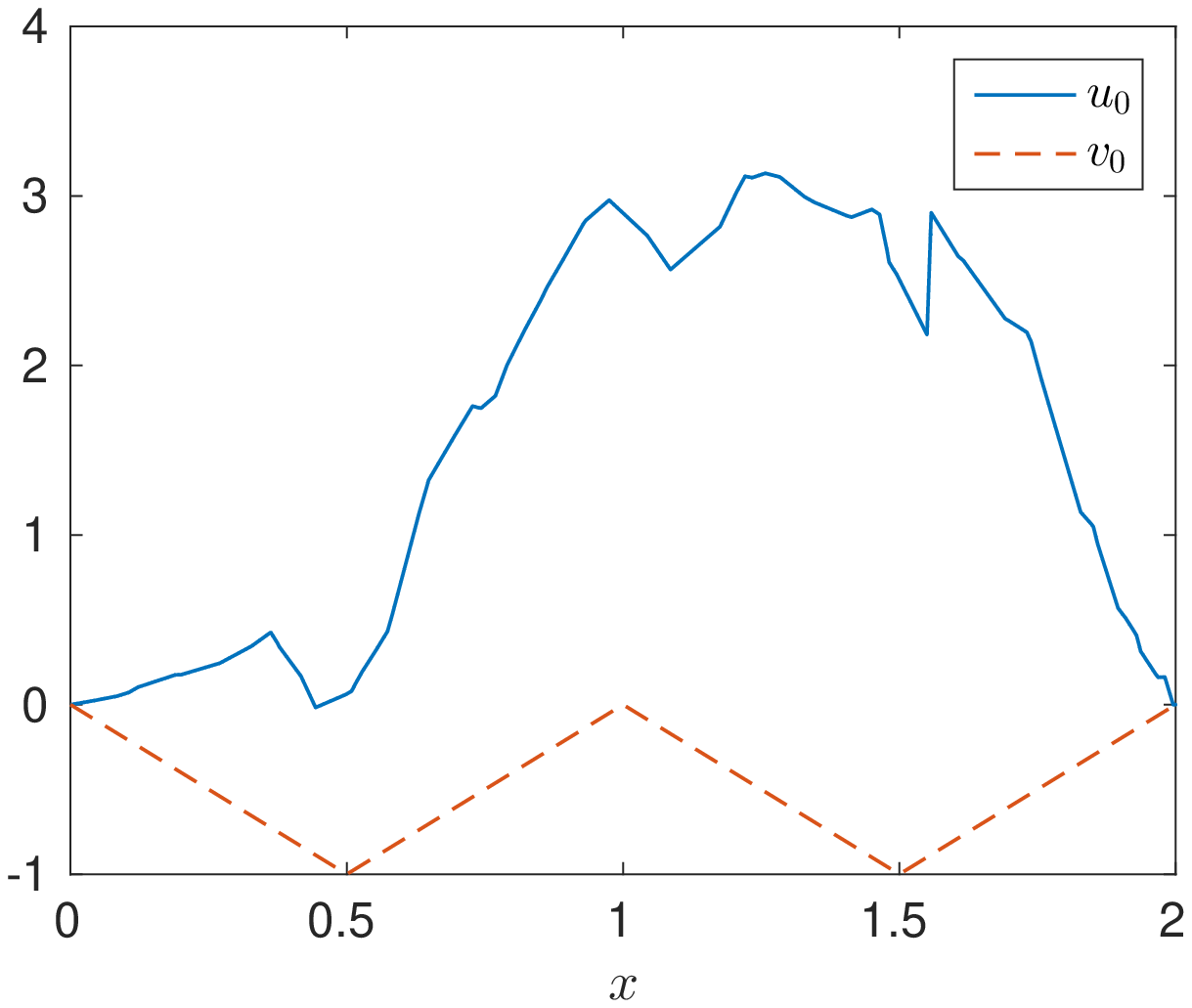}
		&\includegraphics[width=0.5\textwidth]{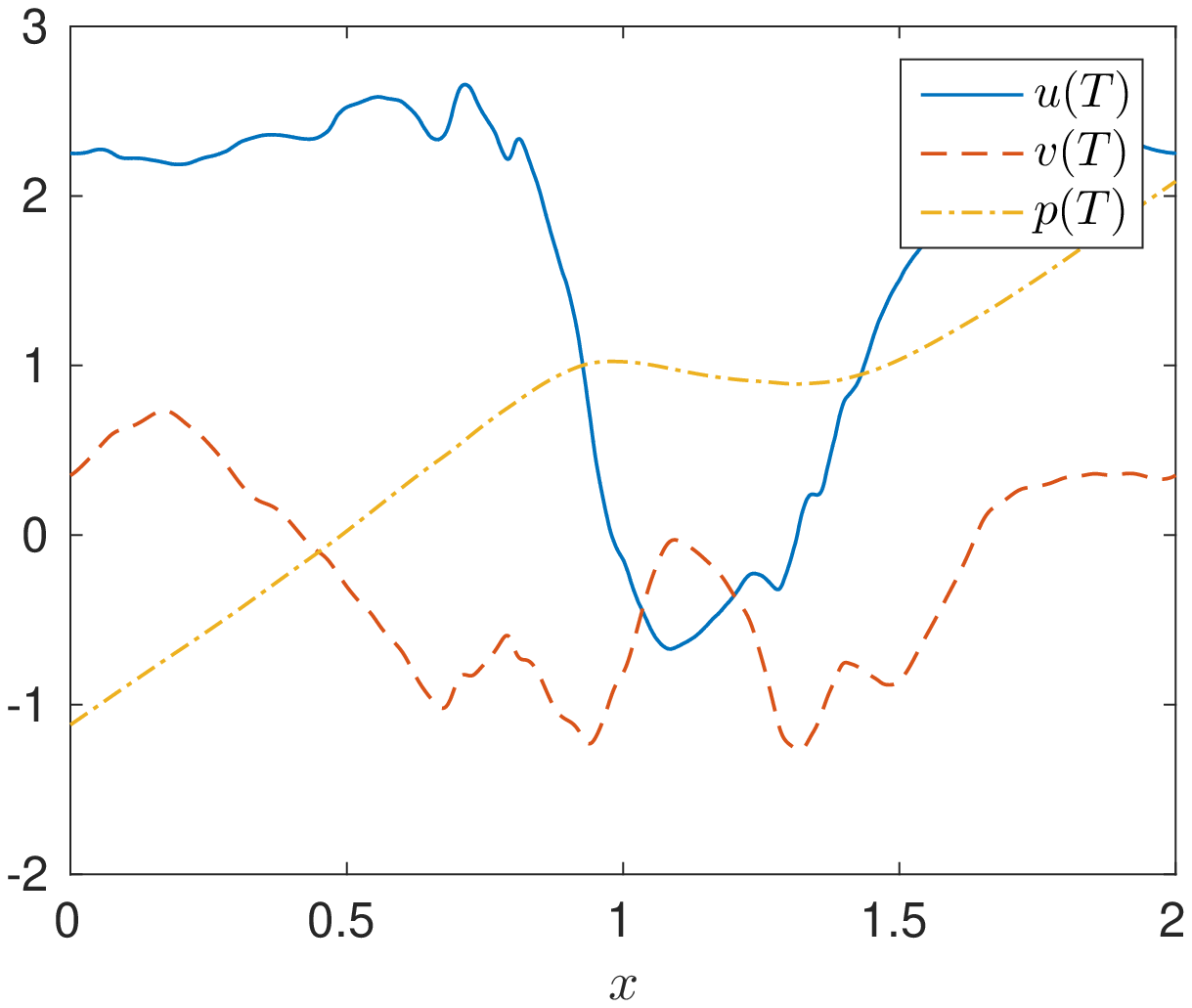}
	\end{tabular}
	\caption{Left: Initial data $u_{0,3}$ and $v_{0,3}$, right: solution at time $T=1$ for initial data $u_{0,3},v_{0,3}$.}
	\label{fig:initdataw}
\end{figure}

The above chosen initial data have moduli of continuity of $\gamma = 1$ in $L^2$ since they are contained in $H^1([0,2])$. We ran $N_{\text{exp}} = 6$ experiments for each set of initial data for mesh resolutions $\Dx = 2^{-5}, \dots, 2^{-10}$ (i.e. $N_x=2^6,\dots, 2^{11}$) and compute errors and rates as in \eqref{eq:relerr}, \eqref{eq:rateapprox} for $m=2$ against a reference solution computed on a grid with $2^{14}$ points. The obtained rates for the three sets of initial data are displayed in Table \ref{tab:wr}.

\begin{table}[h]
	\centering
	\begin{tabular}[h]{|c|c|c|c|}
		\hline
		  &$ r^2_u$ & $r^2_v$ & $r^2_p$ \\
		\hline
		$u_{0,1}, v_{0,1}$ & 0.8715 & 0.6968 & 0.9170\\
		$u_{0,2}, v_{0,2}$ & 0.7424 & 0.7542 & 0.9214 \\
		$u_{0,3}, v_{0,3}$ & 0.5992 & 0.5868 & 0.9503\\
		\hline
	\end{tabular}
	\vspace{1mm}
	\caption{Experimental rates, $r^2_u$ is the rate for $u$, $r^2_v$ the rate for $v$ and $r^2_p$ the rate for $p$.}
	\label{tab:wr}
\end{table}

We observe that the rates are higher than the $\nicefrac{1}{2}$ which the theory predicts, but not by much, as the example with initial data $u_{0,3}, v_{0,3}$ shows. Moreover, we are testing self-convergence, so the actual convergence rate could be slower. We have also computed convergence rates for the approximation of $p$, which we computed by integrating the approximation of $v$ with a forward Euler scheme, i.e.
\begin{equation*}
p^{n+1}_j = p^n_j+\Dt \,v_j^n,
\end{equation*}
for which we have not proved any theoretical results. We observe that the rate for this variable is higher, close to $1$, which is probably due to the fact that $p$ has more regularity than $v$ and $u$, as it can be written as an integral of either of those.

\section{Conclusions}
\label{sec:conc}
Acoustic waves that propagate in a heterogeneous medium, for instance
an oil and gas reservoir, are modeled using the linear wave equation
\eqref{eq:waveeq} with a variable material coefficient $c$. Standard
finite difference and finite element approximations 
converge to the solution as the mesh is refined. A rate of convergence
for these approximations can be obtained based on the assumption that the
underlying solution is smooth enough. This requires enough smoothness
of the material coefficient (wave speed).

However in many practical situations of interest such as seismic wave
imaging and hydrocarbon exploration, the material coefficient is not
smooth, not even continuously differentiable. Moreover, the
material coefficient (rock permeability) is usually modeled by a
log-normal random field. Path-wise realizations of such fields are at
most H\"older continuous. Thus, the design of numerical schemes that
can approximate wave propagation in H\"older continuous media is a
necessary first step in the efficient solution of the underlying
uncertain PDE with a log-normal distributed material coefficient
\cite{sssid3}. We are not aware of rigorous numerical analysis results for discretizations of the wave equation with such rough coefficients apart from the works~\cite{Jovanovic1987,FDMbook,Jovanovic1992} which require the coefficient to be in $W^{s,2}(D)$ for some $s\in(1,3]$.

The current paper is therefore an attempt to design robust numerical
approximations for the one-dimensional transport and the wave equation with rough, i.e., only H\"older continuous coefficients. For low enough H\"{o}lder exponent, this regularity requirement is less than the one in~\cite{Jovanovic1987,FDMbook,Jovanovic1992}, and also our assumptions on the regularity of the solution are weaker. However, our results (so far) restrict to the one-dimensional case.

We propose {upwind} finite difference
approximations and show that these approximations converge as the mesh is
refined. Furthermore, we establish rigorous convergence rates of these approximations. The
obtained rates explicitly depend on the H\"older exponent of the
material coefficient as well as the modulus of continuity in $L^1$ or $L^2$ of
the initial data. The rates of convergence are obtained by a
novel adaptation of the Kru\v{z}kov doubling of variables
technique from scalar conservation laws to our $L^2$ linear system
setting. In particular, we prove that for coefficients which are H\"{o}lder continuous with exponent $\alpha$ and initial data that is H\"{o}lder continuous with exponent $\gamma$, the solution of the transport equation and its approximation have the same H\"{o}lder continuity and the approximations converge with rate at least $(\gamma\alpha)/(\gamma\alpha + 2-\gamma)$ in $L^1$ and $L^2$ (c.f. Theorem \ref{lem:ratetransport} and \ref{lem:ratetransportl2}). For the wave equation, we could show that if the initial data has a modulus of continuity of $\gamma=1$ in $L^2$, then the solution will inherit it. In this case, the finite difference approximations converge at rate at least $\nicefrac{1}{2}$. The numerical experiments demonstrate the near sharpness of this rate. 
We also show rates of convergence under the assumption that the numerical approximations have lower moduli of continuity, however, in this case, we cannot prove that the numerical approximations actually inherit those.

We conclude with a brief discussion on possible limitations and future
extensions of our methods:
\begin{itemize}
\item We consider finite difference discretizations in the current
  paper. The formal order of accuracy of our three-point finite
  difference schemes is one. One can argue that analogous to linear
  hyperbolic systems with smooth coefficients, one can obtain higher
  rates of convergence by designing schemes with a larger stencil (a
  higher formal order of accuracy). We find that prospect unlikely to
  hold in practice on account of the lack of smoothness of the
  coefficient. Furthermore, the irregularities of the coefficient are
  not localized. Hence, one cannot expect any localization of
  singularities in the solution and its derivatives. This is in marked
  contrast to nonlinear systems of conservation laws where
  discontinuities such as shocks and contact discontinuities separate
  smooth parts of the flow. Thus, high-resolution finite difference
  schemes perform better than low order schemes for conservation
  laws. Such a situation does not hold for wave propagation in a rough
  medium. We expect that the low-order schemes presented here are not
  only simple but also optimal in this case.

\item We present the analysis only in one space dimension and for uniform grids. The extension of the finite difference scheme to the two and three dimensional wave equation is straightforward, however, it is not easy to show that the solution computed in this way has a modulus of continuity, which is fundamental to obtaining convergence rates using our technique. In fact, we do not know currently if the approximations do have a modulus of continuity. Obtaining more insight into the regularity of the approximations is one of the objectives of our current research efforts. We would furthermore like to extend the method and convergence analysis to unstructured grids.
  
\item The numerical experiments for the transport equation (Section \ref{ssec:exprates}) suggest that the rate from Theorem \ref{lem:ratetransport} may not be sharp. Consequently, we plan to experiment more in order to find out if the rate is sharp or not, and otherwise try to improve the estimate.

\item We restrict ourselves to acoustic wave propagation in rough
  media in this paper. However, elastic wave propagation also involve
  media with material properties that lead to rough, H\"older
  continuous coefficients. The extension of these methods to such
  problems will be considered in a forthcoming paper. Another possible
  direction of research would be to prove a rate of convergence for
  numerical methods that approximate electromagnetic wave propagation
  in heterogeneous media. Possible extensions to nonlinear wave
  equations, and discontinuous and time-dependent coefficients will also be considered.

\end{itemize}
\section{Acknowledgments}
	This project was supported in part by ERC STG. N 306279, SPARCCLE.
	The author thanks Jonas \v{S}ukys (ETH) for providing the sample coefficient and helpful discussions on the topic. Many thanks also to Nils Henrik Risebro (UiO) and Siddhartha Mishra (ETHZ) for discussions and advice in the course of preparing this manuscript.

\end{document}